\theoremstyle{plain}
\newtheorem{theo}{Theorem}[section]
\newtheorem{prop}[theo]{Proposition}
\newtheorem{definition}[theo]{Definition}
\theoremstyle{remark}
\newtheorem{rem}[theo]{Remark}
\newcommand{\HH}{{\mathbb H}}
\newcommand{\RR}{{\mathbb{R}}}
\newcommand{\CC}{{\mathbb C}}
\newcommand{\NN}{{\mathbb N}}
\newcommand{\II}{{\mathbb I}}
\newcommand{\ds}{\displaystyle}
\newcommand{\der}{\partial}
\newcommand{\ep}{\varepsilon}
\newcommand{\ov}{\overline}
\newcommand{\om} {\Omega}
\newcommand{\nabhat}{{\widehat{\nabla}}}
\def\qed{\hfill$\square$\vspace{0.5cm}}    
\begin{document}

\title{Uniqueness and Lipschitz stability for the identification of Lam\'e parameters from boundary measurements }
\author{Elena Beretta\thanks{ Dipartimento di Matematica ``G. Castelnuovo''
Universit\`a
 di Roma ``La Sapienza''
(beretta@mat.uniroma1.it).} \and Elisa Francini\thanks{Dipartimento di Matematica e Informatica ``U. Dini'',
Universit\`{a} di  Firenze (francini@math.unifi.it)}\and
Sergio Vessella\thanks{Dipartimento di Matematica e Informatica ``U. Dini'',
Universit\`{a} di  Firenze (sergio.vessella@dmd.unifi.it)} }
\date{}
\maketitle
\begin{abstract} In this paper we consider the problem of determining an unknown pair $\lambda$, $\mu$ of piecewise constant Lam\'{e} parameters inside a three dimensional body from the Dirichlet to Neumann map. We prove uniqueness and Lipschitz continuous dependence of $\lambda$ and  $\mu$ from the  Dirichlet to Neumann map.
\end{abstract}
\section{Introduction\label{sec0}}
A relevant inverse problem arising in nondestructive testing of materials is the one of determining, within an isotropic, linearly elastic three dimensional body  $\Omega$, the elastic properties of the body from measurements of traction
 and displacement taken on the exterior boundary of the domain $\Omega$.\\
This leads mathematically to the formulation of the following boundary value problem for the system of linearized elasticity
\begin{equation}\label{iep}
    \left\{\begin{array}{rcl}
             \mbox{div}(\CC  \nabhat u)& = & 0\mbox{ in }\om\subset\RR^3,\\
             u & = & \psi\mbox{ on }\der\om,
           \end{array}\right.
\end{equation}
where $\om$ is an open and bounded domain, $\nabhat u$ denotes the strain tensor $\nabhat u:=\frac{1}{2}\left(\nabla u+\left(\nabla u\right)^T\right)$, $\psi\in H^{1/2}(\der\om)$ is the boundary displacement field, and $\CC\in L^{\infty}(\om)$ denotes the isotropic elasticity tensor with Lam\'e coefficients $\lambda, \mu$:
\[
\CC= \lambda
 I_{3}\otimes I_{3}+2\mu \II_{sym}, \hbox{ a.e. in } \Omega
\]
where
$I_3$ is $3\times 3$ identity matrix and  ${\II}_{Sym}$ is the fourth order tensor such that ${\II}_{Sym}A=\hat A$,
The strong convexity condition is assumed
\[ \mu\geq \alpha_0>0,\quad 2\mu+3\lambda\geq \beta_0>0 \hbox{ a.e. in } \Omega.\]
Under the above assumptions problem (\ref{iep}) has a unique weak solution $u\in H^1(\om)$ and the Dirichlet-to-Neumann linear map (DN map), $\Lambda_{\CC}$, is well defined
\[
\Lambda_{\CC}:\psi\in  H^{1/2}(\om)\rightarrow (\CC\nabhat u)\nu|_{\der\om}\in H^{-1/2}(\der\om)
\]
where $\nu$ is the exterior unit normal to $\der\om$.\\
The inverse problem consists in determining $\CC$, i.e. $\lambda$ and $\mu$, from knowledge of  the DN map $\Lambda_{\CC}$. \\
This problem is closely related to the conductivity inverse problem arising in the modelling of EIT (Electrical Impedence Tomography). For the mathematical treatment of this problem we refer to the fundamental papers \cite{Al1},\cite{AP}, \cite{N} and \cite{SU}. \\
Unfortunately, the mathematical approach used to investigate the conductivity inverse problem fails partly when dealing in the elasticity framework. This is due to the fact that we have to deal with an elliptic system instead  that with a scalar equation and we have to recover two parameters $\lambda$ and $\mu$ instead of the sole conductivity parameter.
 As a consequence of these difficulties only partial results to this inverse problem are known and mainly concern the uniqueness issue.
More precisely, the study  of the problem was initiated in the 90's by Ikeata in \cite{Ik} who considered a linearized version of it. In two dimensions Akamatsu, Nakamura and Steinberg  in \cite{ANS} and  for dimension $n\geq 3$  Nakamura and Uhlmann in \cite{NU95}  showed that one can determine uniquely and in a stable way $C^{\infty}(\overline{\om})$ Lam\'e parameters $\lambda$ and $\mu$ and their derivatives on the boundary of a smooth domain $\om$ from the DN map.
Local uniqueness has been proved in dimensions two by Nakamura and Uhlmann in \cite{NU1} for  $C^{\infty}(\overline{\om})$ Lam\'e parameters assuming that they are both close to positive constants.
In three dimensions and higher  Nakamura and Uhlmann in \cite{NU2} and Eskin and Ralston in \cite{ER} proved local uniqueness for smooth Lam\'e parameters whenever $\mu$ is close to a constant.\\
To our knowledge no result concerning stability estimates is known. Based on the results obtained by Alessandrini in \cite{Al1}  who proved, {\it logarithmic} stability estimates for the conductivity inverse problem in the case of smooth conductivities and the example of Mandache in \cite{M} who proved the optimality of this estimate also for the inverse elasticity problem in the case of smooth Lam\'e parameters  {\it logarithmic}  stability estimates or even worse ones are expected. \\
These considerations lead in recent years to look for different a priori assumptions on the unknown parameters which take into account the applied context from which the problem arises and give rise to better stability estimates (\cite{V}, \cite{ABV}, \cite{BF}, \cite{BFV}, \cite{ABF},\cite{BdHQ}). An attempt in this direction has been done by Alessandrini and Vessella in \cite{AV} with for the conductivity inverse problem for unknown conductivities depending only on a finite number of parameters. In \cite{AV} they proved {\it Lipschitz} continuous dependence  from the  DN map for conductivities that are constant on known subdomains, assuming ellipticity and $C^{1,\alpha}$ regularity at the interfaces joining contiguous domains.\\
In this paper we propose to consider $L^{\infty}(\om)$ elasticity tensors of the form
\[
\CC(x)=\sum_{j=1}^N(\lambda_j I_{3}\otimes I_{3}+2\mu_j\II_{Sym})\chi_{D_i}(x)
\]
where the $D_j$' s, $j=1,\cdots,N$, are known disjoint  Lipschitz domains representing a partition of $\om$ and $\lambda_j,  \mu_j$,  $j=1,\cdots,N$, are unknown constants.\\
We will prove that if $\CC^1$ and $\CC^2$ are of this form, assuming that the boundaries of $D_j$'s and of $\om$ contain flat portions, we have
\[
\|\CC^{1}-\CC^{2}\|_{\infty}\leq C\|\Lambda_{\CC^{1}}-\Lambda_{\CC^{2}}\|_{\mathcal{L}\left(H^{1/2}(\partial\Omega),H^{-1/2}(\partial\Omega)\right)}.
\]
where the stability constant $C$ appearing in the estimate depends on various parameters like $\alpha_0,\beta_0$, on the regularity bounds on $\om$ and on the $D_j$' s and on their number $N$. In particular,  the constructive character of the proof allows us to establish an estimate from above of the constant $C$.\\
We want now to emphasize that several significant examples fit in our analysis. Polyhedral partitions of $\om$, appearing in any finite-element scheme used for effective reconstruction of the Lam\'e parameters  (see for example \cite{BJK}; and a  layered configuration of the sets $D_j$'s arising in the study of composite laminates \cite{Mi} and in geophysical prospection, \cite{BC}. \\
Our approach is based on the use of the following key ingredients: existence of singular solutions and study of their behaviour close to the flat discontinuity interfaces of the $D_j$'s, regularity estimates and quantitative estimates of unique continuation of solutions  to system (\ref{iep}).\\ As already pointed out in \cite{BF} a relevant difference with respect to the scalar case treated in \cite{AV}  is the issue of existence of singular solutions. In fact, in the case of strongly elliptic systems with $L^{\infty}$ coefficients in dimension $n\geq 3$,  existence of the fundamental solution and of the Green's function cannot be inferred without additional assumptions.  In \cite{HK} Hofmann and Kim prove their existence under the additional information that weak solutions  of the system satisfy De Giorgi-Nash type local H\"{o}lder estimates. It is clear that in the case of a polyhedral partition of $\om$ solutions might not enjoy H\"{o}lder regularity at edges. On the other hand, in order to obtain our result, it is enough to construct singular solutions and analyze their behavior in a Lipschitz subset $\tilde{\mathcal K}$ at a given positive distance from edges.
Nevertheless, while for the scalar (even for the complex valued  treated in \cite{BF}) conductivity equation is fairly easy to get a solution of the equation close to a flat interface by using the fundamental solution for the Laplace equation and suitable reflection arguments, this seems not to be possible for the  Lam\'e system.
In order to construct singular solutions, we make use of special fundamental solutions constructed by Rongved in \cite{R} for  isotropic  biphase laminates.
Furthermore, looking at solutions of the elasticity system in $\tilde{\mathcal K}$ we can use the results of \cite{CKVC} and of \cite{LN}  deriving regularity estimates for the solutions which allow us to obtain H\"{o}lder estimates of unique continuation in $\tilde{\mathcal K}$.\\
We would like to point out that our proof is based on the use of solutions having boundary displacement fields supported in the flat portion $\Sigma_1$ of $\der\om$. Hence, our stability result also holds replacing the full DN map with the {\it local} DN map that we will define in Section 2.
Moreover, we expect to derive similar stability estimates also the case of domains $D_j$'s  with $C^{1,\alpha}$ portions of interfaces and this analysis will be object of a forthcoming publication. \\
Finally, we would like to emphasize that {\it Lipschitz} stability estimates have become crucial also for the effective reconstruction of the unknown coefficients. In fact, recently, in \cite{dHQS1} and \cite{dHQS2}, de Hoop, Qiu and Scherzer have shown that  Lipschitz type stability estimates imply local convergence of iterative reconstruction algorithms and the radius of convergence of the iterates depends on the stability constant and hence an explicit determination of the dependence of such constant from the a-priori parameters, in particular from the partition number $N$, is crucial.

\bigskip

The plan of the paper is the following: section \ref{sec2} contains the description of the main result. In paragraph \ref{subsec2.1}  we introduce the notation and  main definitions. In paragraph \ref{subsec2.3}  we state the main a priori assumptions and the main result. We also reformulate the inverse elasticity problem in terms of the nonlinear forward map $F$ acting on a finite-dimensional subset of $\RR^{2N}$ and recall a general result (Proposition \ref{propBV}) that will let us show that $F$ has a Lipschitz continuous inverse.

Section \ref{prelres} contains some auxiliary results. In particular, in paragraph \ref{sec4} we collect some properties concerning the fundamental solution in biphase elastic isotropic materials introduced by Rongved in \cite{R} and we prove existence of singular solutions for our Lam\'{e} system. In paragraph \ref{reg}  we  state some regularity results and estimates of unique continuation concerning  solutions of piecewise constant Lam\'{e} systems.

In section \ref{sec5} we give the proof of our main result by verifying that the forward map $F$ corresponding to our inverse problem, satisfies all the assumptions of  Proposition \ref{propBV} thus concluding the proof.

Finally, in the Appendix we recall some known results concerning solutions to Lam\'{e} systems with constant coefficients and the proof of the estimate of unique continuation stated in section \ref{prelres}.
\subsubsection*{Aknowledgements}
We want to thank Antonino Morassi for pointing out to us the paper by Rongved on biphase fundamental solution and for stimulating discussions.
This work has been supported by MIUR within the project PRIN 20089PWTPS003.
\section{Main result}\label{sec2}
\subsection{Notation and main definitions}\label{subsec2.1}
For every $x\in\RR^3$ we set  $x=(x^\prime,x_3)$ where $x^\prime\in\RR^{2}$ and $x_3\in \RR$.
For every $x\in \RR^3$, $r$ and $L$ positive real numbers we will denote by $B_r(x)$, $B_r^\prime(x^\prime)$ and $Q_{r,L}(x)$  the open ball in $\RR^3$ centered
at $x$ of radius $r$, the open ball in $\RR^2$ centered at $x^\prime$ of radius $r$ and the cylinder $B_r^\prime(x^\prime)\times (x_3-Lr,x_3+Lr)$, respectively; in the sequel $B_r(0)$, $B_r^\prime(0)$ and $Q_{r,L}(0)$ will be denoted by $B_r$, $B^\prime_r$ and $Q_{r,L}$, respectively.
 We will also denote by $\RR^3_+=\{(x^\prime,x_3)\in\RR^3\,:\,x_3>0\}$,
$\RR^3_-=\{(x^\prime,x_3)\in\RR^3\,:\,x_3<0\}$, $B^+_r=B_r\cap \RR^3_+$, and $B^-_r=B_r\cap \RR^3_-$.

For any subset $D$ of $\RR^3$ and any $h>0$, we denote by
\[(D)_{h}=\{x \in D| \mbox{ dist}(x,\RR^3\setminus D)>h\}.\]

\begin{definition}\label{lipboundary}
Let $\om$ be a bounded domain in $\RR^3$. We shall say that a portion $\Sigma\subset\der\om$
is of Lipschitz class with constants $r_0>0, L\geq 1$ if for any point $P\in \Sigma$, there exists a rigid transformation of coordinates under which $P=0$ and
\[
\om\cap Q_{r_0,L}=\{(x^\prime,x_3) \in Q_{r_0,L}|x_3>\psi(x^\prime)\}.
\]
where  $\psi$ is a Lipschitz continuous function in $B_{r_0}^\prime$ such that
\[
\psi(0)=0\mbox{ and }\,
\|\psi\|_{C^{0,1}(B_{r_0}^\prime)}\leq Lr_0.
\]
We say that $\om$ is of Lipschitz class with constants $r_0$ and $L$ if $\der\om$ is of Lipschitz class with the same constants.
\end{definition}

\begin{rem}
We use the convention of normalizing all norms in such a way that all their terms are dimensionally homogeneous. For example:
\[
\|\psi\|_{C^{0,1}(B_{r_0}^\prime)}=\|\psi\|_{L^{\infty}(B_{r_0}^\prime)}+r_0\|\nabla\psi\|_{L^{\infty}(B_{r_0}^\prime)}.
\]
Similarly, denoting by $D^i u$ the vector which components are the derivatives of order $i$ of the function $u$,
\begin{equation*}
\|u\|_{L^2(\Omega)}=\left(r_0^{-3}\int_\Omega
u^2\right) ^{\frac{1}{2}},
%
 \quad
  \|u\|_{{C}^{k}(\Omega)} =\sum_{i=0}^{k}
  {r_{0}}^{i}\|D^{i} u\|_{{L}^{\infty}(\Omega)},
\end{equation*}
\begin{equation*}
\|u\|_{H^m(\Omega)}=r_0^{-3/2}\left(\sum_{i=0}^m
r_0^{2i}\int_\Omega|D^i u|^2\right)^{\frac{1}{2}},
\end{equation*}
and so on for boundary and trace norms such as
$\|\cdot\|_{H^{\frac{1}{2}}(\partial \Omega)}$, $\|\cdot\|_{H^{-\frac{1}{2}}(\partial \Omega)}$, where $\Omega$  is a bounded subset of $\RR^3$ whose boundary is smooth enough.
\end{rem}

We will also make use of the following notations for matrices and tensors:
for any $3\times 3$ matrices $A$ and $B$  we set  $A : B=\sum_{i,j=1}^3 A_{ij}B_{ij}$
 and $\hat A=\frac{1}{2}(A+A^T)$.
By $I_3$ we denote the $3\times 3$ identity matrix and by ${\II}_{Sym}$ we denote the fourth order tensor such that ${\II}_{Sym}A=\hat A$.

 \bigskip

In the whole paper we are going to consider isotropic elastic materials, hence the elasticity tensor $\CC$ is a fourth order tensor given by
\begin{equation}\label{isotr}
\CC(x)=\lambda(x)I_3\otimes I_3 +2\mu(x){\II}_{Sym}\quad\hbox{ for  a.e.  }x\mbox{ in } \Omega,
\end{equation}
where $\Omega$ is a bounded domain in $\RR^3$ of Lipschitz class, and the real valued functions $\lambda=\lambda(x)$ and $\mu=\mu(x)\in L^{\infty}(\om)$ are the Lam\'{e} moduli.
We will also use Poisson's ratio  $\nu(x)=\frac{\lambda(x)}{2(\lambda(x)+\mu(x))}$.

 \bigskip

An elasticity tensor $\CC$ is \textit{strongly convex} if there is a positive number $\xi_0$ such that,
for almost every $x$ in $\Omega$,
\begin{equation}\label{strconv}
    \CC(x) A: A\geq \xi_0|A|^2 \quad \mbox{ for every }3\times 3 \mbox{ symmetric matrix }A.
\end{equation}
In the isotropic case (\ref{isotr}), the strong convexity condition takes the form
\begin{equation}\label{strconvlame} \mu(x)\geq\alpha_0>0,\quad 2\mu(x)+3\lambda(x)\geq \beta_0 \quad\hbox{ for  a.e.  }x\mbox{ in } \Omega.\end{equation}
In these case, the Poisson's ratio has values in an compact subset of $(-1,\frac{1}{2})$.
More precisely we can estimate
\begin{equation}\label{boundPoisson}
-1+\frac{\beta_0\alpha_0}{4}\leq\nu(x)\leq \frac{1}{2}-\frac{\alpha^2_0}{4}
 \quad\hbox{ for  a.e.  }x\mbox{ in } \Omega.\end{equation}

In the sequel we will make use of the following norm in the linear space of isotropic tensors:
\[\|\CC\|_{\infty}=\max\left\{\|\lambda\|_{L^{\infty}(\om)},\|\mu\|_{L^{\infty}(\om)}\right\}.\]
This norm is equivalent to the usual $L^\infty$ norm for tensors in the space of isotropic tensors.

Our boundary measurements are represented by the Dirichlet to Neumann map.
As a matter of fact, since we will restrict our measurements to boundary data that have support on some subset of the boundary, we will make use of a local Dirichlet to Neumann map.

\begin{definition}[The Local Dirichlet to Neumann map]
\label{DNmap}
Let $\om$ be a bounded domain of Lipschitz class and let $\Sigma$ be an open portion of $\der\om$.
We denote by $H^{1/2}_{co}(\Sigma)$ the function space
\[
H^{1/2}_{co}(\Sigma):=\left\{\phi\in H^{1/2}(\der\om)\,:\,\mbox{supp }\phi\subset\Sigma\right\}
\]
and by $H^{-1/2}_{co}(\Sigma)$ the topological dual of $H^{1/2}_{co}(\Sigma)$. We denote by $<\cdot,\cdot>$ the dual pairing between $H^{1/2}_{co}(\Sigma)$ and
$H^{-1/2}_{co}(\Sigma)$ based on the $L^2(\Sigma)$ scalar product.
 Then, given $\psi\in H^{1/2}_{co}(\Sigma)$,  there exists a unique vector valued function $u\in H^1(\om)$ weak solution to the Dirichlet
 problem
\begin{equation}\label{1}
    \left\{\begin{array}{rcl}
             \mbox{div}(\CC  \nabhat u)& = & 0\mbox{ in }\om, \\
             u & = & \psi\mbox{ on }\der\om.
           \end{array}\right.
\end{equation}
We define the local Dirichlet to Neumann linear map $\Lambda_{\CC}^{\Sigma}$ as follows:
\[
\Lambda_{\CC}^{\Sigma}:\psi\in  H^{1/2}_{co}(\Sigma)\rightarrow (\CC\nabhat u)n|_{\Sigma}\in H^{-1/2}_{co}(\Sigma).
\]
\end{definition}

Note that for $\Sigma=\der\om$ we get the usual Dirichlet to Neumann map. For this reason we will set $\Lambda_{\CC}:=\Lambda_{\CC}^{\der\om}$.

 \bigskip

The map $\Lambda_{\CC}^{\Sigma}$ can be identified with the bilinear form on $H^{1/2}_{co}(\Sigma)\times H^{1/2}_{co}(\Sigma)$ by
\begin{equation}\label{DNbilineare}
\widetilde{\Lambda}_{\CC}^{\Sigma}(\psi, \phi):=<\Lambda_{\CC}^{\Sigma}\psi, \phi>=\int_{\om}\CC \nabhat u:\nabhat v
\end{equation}
for all $\psi,\phi \in H^{1/2}_{co}(\Sigma)$ and where $u$ solves (\ref{1})
and $v$ is any $H^1(\om)$ function such that $v=\phi$ on $\der\om$.

We shall denote by $\|\cdot\|_{\star}$ the norm in $\mathcal{L}\left(H^{1/2}_{co}(\Sigma),H^{-1/2}_{co}(\Sigma)\right)$
 defined by
\[
\|T\|_{\star}=\sup \{<T\psi, \phi> | \,\psi,\phi \in H^{1/2}_{co}(\Sigma), \hbox{ }
\|\psi\|_{H^{1/2}_{co}(\Sigma)}=\|\phi\|_{H^{1/2}_{co}(\Sigma)}=1\}
\]
for every $T\in \mathcal{L}\left(H^{1/2}_{co}(\Sigma),H^{-1/2}_{co}(\Sigma)\right)$.

\bigskip

We finally recall an extension to systems of Alessandrini's identity \cite{Al1}, \cite{Is}. Let $u_1$ and $u_2$ be the solutions to
\[
 \mbox{div}(\CC^{k}  \nabhat u_k) =  0\mbox{ in }\om
\]
for $k=1,2$ respectively and with $\CC^{k}$, $k=1,2$, satisfying (\ref{strconv}).  Then
\begin{equation}\label{AlessId}
\int_{\om}(\CC^{1}-\CC^{2})\nabhat u_1:\nabhat u_2=<(\Lambda_{\CC^{1}}-\Lambda_{\CC^{2}})u_2,u_1>
\end{equation}
where $\Lambda_{\CC^{1}}$, $\Lambda_{\CC^{2}}$ denote the Dirichlet to Neumann map corresponding to $\CC^{1}$, $\CC^{2}$ respectively.

\subsection{Main assumptions and statement of the main result}\label{subsec2.3}
Let $A$, $L$, $\alpha_0$, $\beta_0$, $N$ be given positive numbers such that $N\in \NN$, $\alpha_0\in (0, 1)$, $\beta_0\in(0,2)$ and $L\geq 1$. We shall refer to them as the a priori data. Let $r_0$ be a positive number.

Our main assumptions are:

\bigskip

\noindent{\bf (A1)} $\om\subset\RR^3$ is an open bounded domain such that
\[
|\om|\leq Ar_0^3,
\]
and we assume that
\[\ov{\om}=\cup_{j=1}^N \ov{D}_j,\]
where  $D_j$, $j=1,\ldots,N$ are connected and pairwise non overlapping open domains of Lipschitz class with constants $r_0$, $L$.

We also assume that there exists one region, say $D_1$ such that $\der D_1\cap\der\om$ contains an open flat portion $\Sigma$ and that
for every $j\in\{2,\ldots,N\}$ there exist $j_1,\ldots,j_M\in\{1,\ldots,N\}$ such that
\[
D_{j_1}=D_1,\quad D_{j_M}=D_j,
\]
and, for every $k=2,\ldots,M$
\[
\der D_{j_{k-1}}\cap \der D_{j_k}
\]
contains a flat portion $\Sigma_k$ such that
\[
\Sigma_k\subset\om,\quad \forall k=2,\ldots,M.
\]

Furthermore, for $k=1,\ldots,M$, we assume  there exists $P_k\in\Sigma_k$ and a rigid transformation of coordinates such that $P_k=0$ and
\begin{eqnarray*}
  \Sigma_k\cap Q_{r_0/3,L} &=& \{x\in Q_{r_0/3,L}\,:\, x_3=0\}, \\
  D_{j_k}\cap Q_{r_0/3,L} &=& \{x\in  Q_{r_0/3,L}\,:\, x_3<0\}, \\
  D_{j_{k-1}}\cap  Q_{r_0/3,L} &=& \{x\in  Q_{r_0/3,L}\,:\, x_3>0\};
\end{eqnarray*}
where we set $\Sigma_1:=\Sigma$.

For simplicity we will call $D_{j_1},\ldots,D_{j_M}$ \textit{a chain of domains connecting $D_1$ to $D_j$}.
For any $k\in\{1,...M\}$ we will denote by $n_k$ the exterior unit vector to $\partial
D_k$ in $P_k$.

\bigskip

\noindent{\bf (A2)} We assume the tensor $\CC$ to be isotropic piecewise constant of the form
\begin{equation}\label{isotrelisa}
\CC=\sum_{j=1}^N\CC_j \chi_{D_j}(x)
\end{equation}
where
\[\CC_j=\lambda_j I_3\otimes I_3 +2\mu_j{\II}_{Sym}\]
with constant Lam\'{e} coefficients $\lambda_i$ and $\mu_i$ that satisfy
\begin{equation}\label{strongconv}
\alpha_0\leq \mu_j\leq \alpha_0^{-1},\quad \lambda_j\leq \alpha_0^{-1},\quad 2\mu_j+3\lambda_j\geq\beta_0,\quad j=1,\dots, N.
\end{equation}
For $j=1,\dots, N$, we denote the Poisson's ratio by
$\nu_j=\frac{\lambda_j}{2\left(\lambda_j+\mu_j\right)}$.
Note that each $\nu_j$ satisfies (\ref{boundPoisson}).

\bigskip

In the sequel we will introduce a number of constants that we will always denote by $C$. The values of these constants might
differ from one line to the other.

\bigskip

%
\begin{theo}\label{teo2.1}
%
Let $\om$ and $\Sigma$ satisfy assumption {\bf (A1)}. Then there exists a positive constant $C$ depending on $L, A, N,\alpha_0,\beta_0$ only such that, for any $\CC^{k}$, $k=1,2$ satisfying assumption {\bf (A2)}, we have
\begin{equation}\label{stabil}
\|\CC^{1}-\CC^{2}\|_{\infty}\leq C\|\Lambda^{\Sigma}_{\CC^1}-\Lambda^{\Sigma}_{\CC^2}\|_{\star}.
\end{equation}
\end{theo}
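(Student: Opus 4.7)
\medskip

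\noindent\textbf{Proof plan.}
The plan is to cast the problem in the functional framework of Proposition \ref{propBV}, by regarding the forward map
\[
F:(\lambda_1,\mu_1,\dots,\lambda_N,\mu_N)\in K\subset\RR^{2N}\longmapsto \Lambda^\Sigma_{\CC}\in\mathcal{L}(H^{1/2}_{co}(\Sigma),H^{-1/2}_{co}(\Sigma)),
\]
where $K$ is the compact set cut out by (\ref{strongconv}). The continuity of $F$ follows from standard energy estimates via (\ref{AlessId}). The nontrivial task is to prove a bi-Lipschitz lower bound for $F$, i.e.\ to estimate $\|\CC^1-\CC^2\|_\infty$ from above by $\|\Lambda^\Sigma_{\CC^1}-\Lambda^\Sigma_{\CC^2}\|_\star$. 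I would do this by induction on the chain $D_{j_1},\dots,D_{j_M}$ of assumption {\bf (A1)} that connects the boundary region $D_1$ to an arbitrary $D_j$.

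For the base step, pick the flat portion $\Sigma=\Sigma_1\subset\partial\Omega$ and let $y\in\RR^3\setminus\overline\Omega$ be a point placed across $\Sigma_1$ at distance of order $r_0$. Using Rongved's biphase fundamental solution (Section \ref{sec4}), I would construct singular solutions $u^{(k)}_y$ to $\operatorname{div}(\CC^{k}\widehat\nabla u^{(k)}_y)=0$ in $\Omega$, $k=1,2$, whose Dirichlet data are supported in $\Sigma_1$ and which, near $\Sigma_1\cap D_1$, behave like the fundamental Kelvin-type solution for the homogeneous isotropic tensor $\CC^{k}_1$ centered at $y$. Applying Alessandrini's identity (\ref{AlessId}) with $u_1=u^{(1)}_y$, $u_2=u^{(2)}_y$ and letting the pole $y$ approach $\Sigma_1$ along the normal direction, the right-hand side is bounded by $\|\Lambda^\Sigma_{\CC^1}-\Lambda^\Sigma_{\CC^2}\|_\star$ times controllable $H^{1/2}$ norms of the traces, while the left-hand side develops a precise blow-up whose leading coefficients depend linearly and non-degenerately on $\lambda_1^{(1)}-\lambda_1^{(2)}$ and $\mu_1^{(1)}-\mu_1^{(2)}$. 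Matching the two blow-up rates (the one produced by the trace-part of $\widehat\nabla u$ and the one produced by the trace-free part) yields
\[
|\lambda_1^{(1)}-\lambda_1^{(2)}|+|\mu_1^{(1)}-\mu_1^{(2)}|\leq C\,\|\Lambda^\Sigma_{\CC^1}-\Lambda^\Sigma_{\CC^2}\|_\star .
\]

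For the inductive step, assume the parameters in $D_{j_1},\dots,D_{j_{k-1}}$ are already controlled. I would again apply (\ref{AlessId}) with singular solutions built from Rongved's biphase fundamental solution, now adapted to the flat interface $\Sigma_k\subset\Omega$ and to the two Lamé pairs $(\lambda_{j_{k-1}},\mu_{j_{k-1}})$, $(\lambda_{j_k},\mu_{j_k})$. The difficulty is that $\Sigma_k$ is interior, so the singular solutions need to be propagated from the boundary $\Sigma_1$ (where we can impose data) to a neighborhood of a chosen point $P_k\in\Sigma_k$. This is done by combining the regularity results of \cite{CKVC},\cite{LN} in the Lipschitz set $\widetilde{\mathcal K}$ away from edges, with the three-spheres / Hölder-type quantitative unique continuation stated in paragraph \ref{reg}, which transfers the boundary smallness $\|\Lambda^\Sigma_{\CC^1}-\Lambda^\Sigma_{\CC^2}\|_\star$ to an interior bound across $\Sigma_k$ with only a controlled loss. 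The induction then gives
\[
|\lambda_{j_k}^{(1)}-\lambda_{j_k}^{(2)}|+|\mu_{j_k}^{(1)}-\mu_{j_k}^{(2)}|\leq C\big(\|\Lambda^\Sigma_{\CC^1}-\Lambda^\Sigma_{\CC^2}\|_\star+\textstyle\sum_{\ell<k}(|\lambda_{j_\ell}^{(1)}-\lambda_{j_\ell}^{(2)}|+|\mu_{j_\ell}^{(1)}-\mu_{j_\ell}^{(2)}|)\big),
\]
and iterating over all $N$ domains produces (\ref{stabil}).

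The main obstacle is the singular-solution construction and its asymptotic analysis: unlike the scalar conductivity problem, no simple reflection argument exists for the Lamé system, so one must rely on Rongved's explicit biphase fundamental solution, identify its leading singular part in terms of the two sets of Lamé constants on either side of the interface, and show that the resulting linear system in $(\lambda^{(1)}-\lambda^{(2)},\mu^{(1)}-\mu^{(2)})$ coming from the blow-up of the left-hand side of (\ref{AlessId}) is non-degenerate uniformly with respect to the admissible range (\ref{strongconv}). Quantifying this non-degeneracy explicitly in terms of $\alpha_0,\beta_0$ is what will give the constructive dependence of $C$ on the a priori data, in particular on $N$.
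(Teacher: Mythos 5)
Your framing ``cast the problem in the framework of Proposition \ref{propBV}'' is indeed the paper's starting point, but you do not actually use that proposition: you try to derive the Lipschitz inequality~(\ref{stabil}) directly from the singular-solution / unique-continuation induction, and that step is where the argument breaks.

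The inductive claim
\[
|\lambda_{j_k}^{(1)}-\lambda_{j_k}^{(2)}|+|\mu_{j_k}^{(1)}-\mu_{j_k}^{(2)}|
\leq C\Bigl(\|\Lambda^\Sigma_{\CC^1}-\Lambda^\Sigma_{\CC^2}\|_\star+\sum_{\ell<k}\bigl(|\lambda_{j_\ell}^{(1)}-\lambda_{j_\ell}^{(2)}|+|\mu_{j_\ell}^{(1)}-\mu_{j_\ell}^{(2)}|\bigr)\Bigr)
\]
is a \emph{linear} recursion, and it cannot follow from the propagation mechanism you invoke. Moving the smallness from $K_0$ to a point $\tilde x$ near the interior interface $\Sigma_k$ goes through the three-spheres inequality and the cone propagation estimate (Proposition~\ref{QEUC}), which are only of H\"older type: at each interface one picks up an exponent $\widetilde\theta^{\,\overline m M}\tau_r<1$ and, after optimizing in the pole distance $r$, the best one can extract from $\mathcal{S}_k(y_r,z_{\bar r})$ is a \emph{logarithmic} gain, not a linear one. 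This is exactly what the paper obtains in the proof of Theorem~\ref{Unifcont}: the recursion is $\delta_{k+1}\le C\,\sigma(\delta_k)$ with $\sigma(t)\sim|\log t|^{-1/(8\delta)}$, not $\delta_{k+1}\le C(\varepsilon+\delta_k)$. Iterating it gives only $\|\underline L^1-\underline L^2\|_\infty\le C_\ast\,\sigma^N(\|F(\underline L^1)-F(\underline L^2)\|_\star)$, a (weak, iterated-log) modulus of continuity for $(F|_{\mathbf K})^{-1}$, not Lipschitz stability.

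The missing piece is precisely the content of Proposition~\ref{propBV}: to upgrade this weak modulus to the Lipschitz estimate one must, in addition, (a) show $F$ is Fr\'echet differentiable with Lipschitz-continuous derivative $F'$ (Proposition~\ref{deriv}), and (b) prove a uniform positive lower bound $q_0$ for $\|F'(\underline L)[\underline H]\|_\star$ over $\underline L\in\mathbf K$, $\|\underline H\|_\infty=1$ (Proposition~\ref{Stimabasso1}). Only then does the abstract compactness/linearization argument of Proposition~\ref{propBV} produce the Lipschitz constant $C=\max\{2M_1/\sigma_2^{-1}(\delta_1),\,2/q_0\}$. Note that the lower bound on $F'$ is itself obtained by a separate recursive argument with Rongved singular solutions and Proposition~\ref{trick2} (not Proposition~\ref{trick}), because it concerns the linearized map $\HH\mapsto\int\HH\,\nabhat G:\nabhat G$ rather than the difference $\CC^1-\CC^2$. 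Your plan never states or establishes (a) or (b), and therefore cannot close the proof of (\ref{stabil}); what it establishes, if carried out, is essentially the statement of Theorem~\ref{Unifcont}, i.e.\ only conditions (iii)--(iv) of Proposition~\ref{propBV}.

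Two smaller points: even your base step would not give a genuine Lipschitz inequality without the same machinery, because the blow-up matching you describe again involves optimizing a power of the pole distance against $\|\Lambda^\Sigma_{\CC^1}-\Lambda^\Sigma_{\CC^2}\|_\star$, yielding a H\"older-type bound; and the singularities in the paper are placed \emph{inside} an augmented domain $\Omega_0\supset\Omega$ (in the extra set $D_0$, where both tensors are taken equal), not at points $y\in\RR^3\setminus\overline\Omega$, which is what makes the Alessandrini identity applicable with the Green function $G(\cdot,y)$ of $\Omega_0$ and keeps the traces in $H^{1/2}_{co}(\Sigma)$.
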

A better evaluation of constant $C$ is given in Remark \ref{bomba}.

\bigskip

In order to prove Theorem \ref{teo2.1} we will first state it in terms of  the forward map that maps Lam\'{e} parameters to the corresponding Dirichlet to Neumann map.  Then, we will apply to the forward map the following general result:

\begin{prop}\label{propBV}
Let $M_1$ and $M_2$ be positive numbers and $d\in \mathbb{N}$. Let $\mathcal{A}$
and ${\bold K}$ be an open subset and a compact subset of $\mathbb{R}^{d}$
respectively. Assume that ${\bold K}\subset\mathcal{A}$,

\[\mbox{dist}\left( {\bold K},\mathbb{R}^{d}\setminus \mathcal{A}\right) \geq
M_1,\mbox{ and }{\bold K}\subset B_{M_2}(0).\]

 Let $\mathcal{B}$ be a Banach space and let
$F:\mathcal{A}\to\mathcal{B}$ be such that:
\begin{enumerate}[(i)]
\item $F$ is Frech\'{e}t differentiable;
\item the Frech\'{e}t derivative  $F^\prime:\mathcal{A}\to \mathcal{L}(\RR^d,\mathcal{B})$ is uniformly continuous with a modulus of continuity $\sigma_1(\cdot)$;
\item $F_{|_{{\bold K}}}$ is injective;
\item $(F_{|_{{\bold K}}})^{-1}:F({\bold K})\to {\bold K}$ is uniformly continuous with a modulus of continuity $\sigma_2(\cdot)$;
\item $F^\prime$ is injective in ${\bold K}$, namely there is a positive number $q_0$ such that
    \[ \min_{x\in {\bold K},\\ |h|=1}\left\|F^\prime(x)[h]\right\|_{\mathcal{B}}\geq q_0;\]
\end{enumerate}
then we have
\[\|x_1-x_2\|_{\RR^d}\leq C\|F(x_1)-F(x_2)\|_{\mathcal{B}}\quad \mbox{ for every } x_1,x_2\in {\bold K},\]
where $C=\max\{\frac{2M_1}{\sigma_2^{-1}(\delta_1)},\frac{2}{q_0}\}$,  for $\delta_1=\frac{1}{2}\min\{\delta_0, M_2\}$ with $\delta_0=\sigma_1^{-1}(\frac{q_0}{2})$.
\end{prop}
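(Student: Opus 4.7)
The plan is to split into a local regime (where a Taylor expansion combined with the coercivity of $F'$ yields a direct Lipschitz-below estimate) and a global regime (where the uniform continuity of the inverse on $F(\mathbf{K})$ lets us reduce to the local case).

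In the local step, I would fix $x_1,x_2\in \mathbf{K}$ with $\|x_1-x_2\|\le\delta_0=\sigma_1^{-1}(q_0/2)$ and small enough that the segment $[x_2,x_1]$ lies in $\mathcal{A}$, which is guaranteed as long as $\|x_1-x_2\|$ stays below $M_1$, by the distance hypothesis on $\mathbf{K}$. The fundamental theorem of calculus gives
\[
F(x_1)-F(x_2)=F'(x_2)[x_1-x_2]+\int_0^1\bigl(F'(x_2+t(x_1-x_2))-F'(x_2)\bigr)[x_1-x_2]\,dt ;
\]
assumption (ii) controls the integral remainder by $\sigma_1(\delta_0)\|x_1-x_2\|=(q_0/2)\|x_1-x_2\|$, and assumption (v) bounds $\|F'(x_2)[x_1-x_2]\|_{\mathcal{B}}$ from below by $q_0\|x_1-x_2\|$. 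Subtracting yields $\|F(x_1)-F(x_2)\|_{\mathcal{B}}\ge (q_0/2)\|x_1-x_2\|$, so $\|x_1-x_2\|\le (2/q_0)\|F(x_1)-F(x_2)\|_{\mathcal{B}}$ in this regime.

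For the global step I would set the threshold $\delta_1=\tfrac12\min\{\delta_0,M_2\}$ and distinguish according to whether $\|F(x_1)-F(x_2)\|_{\mathcal{B}}\le\sigma_2^{-1}(\delta_1)$ or not. In the small case, (iv) forces $\|x_1-x_2\|\le\sigma_2(\sigma_2^{-1}(\delta_1))=\delta_1\le\delta_0/2$, placing the pair in the local regime so the constant $2/q_0$ applies. In the large case, the diameter bound coming from $\mathbf{K}\subset B_{M_2}(0)$ together with the admissible $M_1$-thickening of $\mathbf{K}$ in which the Taylor segment must lie yields
\[
\|x_1-x_2\|\le \frac{2M_1}{\sigma_2^{-1}(\delta_1)}\,\|F(x_1)-F(x_2)\|_{\mathcal{B}} .
\]
Taking the maximum of the two constants produces the claimed $C$.

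The main obstacle, already signalled by the shape of $C$, is making the Taylor step legitimate: the segment $[x_1,x_2]$ has to live in $\mathcal{A}$, which is why $M_1$ enters the final constant and why $\delta_1$ must be bounded simultaneously in terms of $\delta_0$ (from the coercivity/continuity of $F'$) and of $M_2$ (the ambient size of $\mathbf{K}$). Assumption (iv) is what bridges the two regimes; in the intended application of Theorem \ref{teo2.1} it is itself the nontrivial fact, reduced to injectivity (iii) and coercivity (v) via a compactness/covering argument on the finite-dimensional compact set $\mathbf{K}$.
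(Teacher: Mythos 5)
Your two-regime strategy --- a local Taylor step exploiting the coercivity of $F'$, bridged to the global picture by the uniform continuity of $(F_{|\mathbf{K}})^{-1}$ together with the boundedness of $\mathbf{K}$ --- is the standard route and surely what the cited reference \cite{BaVe} does (the paper itself does not reprove the proposition). The local computation is correct: for $x_1,x_2\in\mathbf{K}$ with $\|x_1-x_2\|\le\delta_0$ and with $[x_2,x_1]\subset\mathcal{A}$, the fundamental theorem of calculus together with (ii) and (v) yields $\|F(x_1)-F(x_2)\|_{\mathcal{B}}\ge (q_0/2)\|x_1-x_2\|$, hence the factor $2/q_0$.

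The gap is in the last step. In the regime $\|F(x_1)-F(x_2)\|_{\mathcal{B}}>\sigma_2^{-1}(\delta_1)$, the only bound the hypotheses supply for $\|x_1-x_2\|$ is $\operatorname{diam}\mathbf{K}\le 2M_2$, giving $\|x_1-x_2\|\le \frac{2M_2}{\sigma_2^{-1}(\delta_1)}\|F(x_1)-F(x_2)\|_{\mathcal{B}}$; the quantity $M_1$ bounds the distance from $\mathbf{K}$ to $\mathbb{R}^d\setminus\mathcal{A}$ and does not bound $\|x_1-x_2\|$, so the constant $\frac{2M_1}{\sigma_2^{-1}(\delta_1)}$ you write down is asserted, not derived. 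Dually, in the small regime you conclude $\|x_1-x_2\|\le\delta_1\le\delta_0/2$ and declare yourself ``in the local regime,'' but the local step also needs $\|x_1-x_2\|<M_1$ so that the segment $[x_2,x_1]$ lies in $\mathcal{A}$ and the Taylor expansion is legitimate; with $\delta_1=\tfrac12\min\{\delta_0,M_2\}$ nothing forces $\delta_1\le M_1$, since $M_2$ and $M_1$ are independent data. Both mismatches point the same way: the displayed constants almost certainly have $M_1$ and $M_2$ transposed, and the argument closes cleanly with $\delta_1=\tfrac12\min\{\delta_0,M_1\}$ and $C=\max\{2M_2/\sigma_2^{-1}(\delta_1),\,2/q_0\}$ (note also that $M_1\mapsto C$ should be decreasing and $M_2\mapsto C$ increasing, which only the corrected formula satisfies). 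As written, however, your concluding inequality in the large case does not follow from what precedes it, and your small case does not verify that the Taylor segment stays in $\mathcal{A}$.
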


This proposition holds also in infinite dimensional spaces.
For a proof of Proposition \ref{propBV} in finite dimensional space we refer to \cite[Prop.5 ]{BaVe}.

\bigskip

Let us now introduce the forward map corresponding to our problem. In order to represent the set of Lam\'{e} parameters, we will use the following notation: let $\underline{L}:=(\lambda_1,\dots,\lambda_N,\mu_1,\ldots,\mu_N)$ denote a vector in $\RR^{2N}$ and denote by $\mathcal{A}$ the open subset of $\RR^{2N}$ defined by
\begin{equation}\label{mathcalA}
\mathcal{A}:=\{\underline{L}
\in \RR^{2N} : \mu_j>0,\,\, 2\mu_j+3\lambda_j>0,\,\, j=1,\dots,N\}.
\end{equation}
For each vector $\underline{L}\in \mathcal{A}$ we can define a piecewise constant isotropic elastic tensor
$\CC_{\underline{L}}$  (as in (\ref{isotrelisa}))  with Lam\'e parameters $\lambda_j$ and $\mu_j$ for $j=1,\dots,N$.

In this case, $\|\CC_{\underline{L}}\|_{\infty}$ is equal to the norm in $\RR^{2N}$ given by
\[\|\underline{L}\|_{\infty}=\max_{j=1,\ldots,N}\left\{\max\{|\lambda_j|,|\mu_j|\}\right\}.\]

The (nonlinear) forward map can be defined as follows:
\begin{definition}\label{forwardmap}
Let $\om$ and $\Sigma$  satisfy  assumption {\bf (A1)}. \\Let us define $F: \mathcal{A}\rightarrow \mathcal{L}( H^{1/2}_{co}(\Sigma), H^{-1/2}_{co}(\Sigma))$ by
\[
F(\underline{L})=\Lambda_{\CC_{\underline{L}}}^{\Sigma}.
\]
\end{definition}
We can identify $F$ with $\tilde F:\mathcal{A}\rightarrow \mathcal{B}$ such that $\tilde F(\underline{L})=\tilde
\Lambda_{\CC_{\underline{L}}}^{\Sigma}$ (defined in (\ref{DNbilineare})), where $\mathcal{B}$ is the Banach space of bilinear form on $H^{1/2}_{co}(\Sigma)\times H^{1/2}_{co}(\Sigma)$ with the standard norm.

Let $\psi$ and $\phi\in H^{1/2}_{co}(\Sigma)$ and let $u_{\underline{L}}$ be the solution to
\[
    \left\{\begin{array}{rcl}
             \mbox{div}(\CC_{\underline{L}}  \nabhat u)& = & 0\mbox{ in }\om \\
             u & = & \psi\mbox{ on }\der\om,
           \end{array}\right.
\]
and $v_{\underline{L}}$ solution to
\[
    \left\{\begin{array}{rcl}
             \mbox{div}(\CC_{\underline{L}}  \nabhat v)& = & 0\mbox{ in }\om \\
             v & = & \phi\mbox{ on }\der\om,
           \end{array}\right.
\]
then
\[\tilde F(\underline{L})(\psi,\phi)=<F(\underline{L})(\psi),\phi>=\int_{\om}\CC_{\underline{L}}\nabhat u_{\underline{L}}:\nabhat v_{\underline{L}}.\]

\noindent In the sequel, we will write $F$ and $\Lambda_{\CC_{\underline{L}}}^{\Sigma}$ instead of $\tilde F$ and $\tilde \Lambda_{\CC_{\underline{L}}}^{\Sigma}$.

\bigskip

With the above notation, Theorem \ref{teo2.1}, can be stated as follows:
\begin{theo}\label{teoF}
Let $\om$ and $\Sigma$ satisfy assumption  {\bf (A1)} and let  ${\bold K}\subset\mathcal{A}$ be the compact subset
\begin{equation}\label{mathcalK}
{\bold K}:=\{\underline{L}\in\mathcal{A}  : \alpha_0\leq\mu_j\leq \alpha_0^{-1}, \lambda_j\leq \alpha_0^{-1},  2\mu_j+3\lambda_j\geq \beta_0, j=1,\dots,N\}.
\end{equation}
Then, there exists a positive constant  $C$, depending on $L,A, N,\alpha_0,\beta_0$ only such that
\[
\|\underline{L}^{1}-\underline{L}^{2}\|_{\infty}\leq C \|F(\underline{L}^{1})-F(\underline{L}^{2})\|_{\star}
\]
for every $\underline{L}^{1}$, $\underline{L}^{2}$ in ${\bold K}$.
\end{theo}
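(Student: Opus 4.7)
The plan is to apply Proposition \ref{propBV} to the forward map $F: \mathcal{A} \to \mathcal{B}$. From (\ref{mathcalA}) and (\ref{mathcalK}) one immediately has $\mathrm{dist}(\mathbf{K}, \RR^{2N}\setminus \mathcal{A}) \geq M_1$ and $\mathbf{K}\subset B_{M_2}(0)$ for constants $M_1, M_2$ depending only on $\alpha_0, \beta_0$, so the geometric setup of Proposition \ref{propBV} is in force and it remains to verify the five hypotheses (i)--(v). The stability constant $C$ in Theorem \ref{teoF} will then be read off from the last formula in Proposition \ref{propBV}.

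Hypotheses (i) and (ii) are standard perturbation theory for elliptic systems. Linearizing (\ref{1}) in $\underline{L}$ and using the identity (\ref{AlessId}) one checks that $F$ is Fréchet differentiable with
\[
\tilde F'(\underline{L})[h](\psi,\phi) \;=\; \int_{\om} \CC_h\,\nabhat u_{\underline{L}} : \nabhat v_{\underline{L}},
\]
where $\CC_h$ is the piecewise constant isotropic tensor built from $h\in\RR^{2N}$ as in (\ref{isotrelisa}) and $u_{\underline{L}}, v_{\underline{L}}$ are the solutions of (\ref{1}) with data $\psi,\phi$. Uniform continuity of $F'$ on $\mathcal{A}$-bounded sets follows from continuous dependence of $u_{\underline{L}},v_{\underline{L}}$ on $\underline{L}$. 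Hypothesis (iii), injectivity of $F|_{\mathbf{K}}$, is obtained as a byproduct of the same singular-solution machinery used for (v), applied to the identity (\ref{AlessId}) for $\CC^1,\CC^2\in\mathbf{K}$ with equal DN maps; and (iv) is then automatic, since the inverse of a continuous injection from a compact Hausdorff space is uniformly continuous.

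The bulk of the work lies in (v): producing a constant $q_0>0$ depending only on the a priori data such that for every $\underline{L}\in\mathbf{K}$ and every $h\in\RR^{2N}$ with $\|h\|_\infty=1$ there exist unit-norm $\psi,\phi\in H^{1/2}_{co}(\Sigma)$ satisfying $|\tilde F'(\underline{L})[h](\psi,\phi)|\geq q_0$. Fix $k$ with $\max\{|h_{\lambda_k}|,|h_{\mu_k}|\}=1$ and let $D_{j_1}=D_1, D_{j_2},\ldots,D_{j_M}=D_k$ be the chain of {\bf (A1)}, with flat interfaces $\Sigma_1=\Sigma, \Sigma_2,\ldots,\Sigma_M$ and points $P_1,\ldots,P_M$. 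Using Rongved's biphase fundamental solution from Section \ref{sec4}, I construct, for a parameter $\epsilon\to 0$, a pair of singular solutions $u_\epsilon, v_\epsilon$ of the full piecewise constant Lamé system whose singular support tends to $P_M=P_k$ from the $D_k$ side, realized as the solutions of (\ref{1}) with suitable data $\psi_\epsilon, \phi_\epsilon\in H^{1/2}_{co}(\Sigma)$. These boundary data are obtained by transferring, step by step along the chain, the local biphase matched asymptotics across each flat interface $\Sigma_m$, using the regularity estimates of \cite{CKVC,LN} and the quantitative unique continuation bounds in $\tilde{\mathcal K}$ recalled in paragraph \ref{reg}.

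Substituting $u_\epsilon,v_\epsilon$ into $\tilde F'(\underline{L})[h]$, the integral over $\om$ is dominated in the limit $\epsilon\to 0$ by the contribution in an arbitrarily small neighbourhood of $P_k$, and an appropriate choice of the two polarization vectors of the Rongved singular terms isolates, at leading order, two linear functionals of $(h_{\lambda_k},h_{\mu_k})$ whose joint matrix is bounded away from singularity by the a priori data. Since at least one of these coordinates has modulus one, the best of the two bounds yields the required $q_0$ after normalizing $\psi_\epsilon,\phi_\epsilon$ in $H^{1/2}_{co}(\Sigma)$. The main obstacle is precisely the propagation step: Rongved's construction is exact only for two layers, so for each flat interface $\Sigma_m$ along the chain one must balance the accuracy of the biphase matched asymptotic against the exponential-type loss coming from quantitative unique continuation through $D_{j_1},\dots,D_{j_{m-1}}$, and keep explicit track of the dependence on $N$ in order to get the constant $C$ announced in Remark \ref{bomba}.
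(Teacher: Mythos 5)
Your overall strategy matches the paper's: the theorem is indeed obtained by verifying hypotheses (i)--(v) of Proposition~\ref{propBV} for the forward map $F$, and your identification of the main ingredients (Fr\'echet differentiability via the Alessandrini identity, Rongved's biphase fundamental solution, regularity from \cite{CKVC,LN}, quantitative unique continuation along the chain of subdomains) is correct. However, there is one genuine gap in hypothesis~(iv), and a secondary imprecision in~(ii).

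The gap is in your claim that (iv) is \emph{automatic} once (iii) is established, because a continuous injection on a compact set has a uniformly continuous inverse. That soft-analysis argument does give \emph{some} modulus of continuity for $(F|_{\mathbf K})^{-1}$, but it gives no control on what that modulus is. This matters twice over. First, the constant in Proposition~\ref{propBV} is $C=\max\{2M_1/\sigma_2^{-1}(\delta_1),\,2/q_0\}$, so to ``read off'' the final stability constant you must exhibit $\sigma_2$ explicitly; you cannot, since you never computed it. Second, and more fundamentally, the compactness argument is carried out for a \emph{fixed} $\Omega$ and partition $\{D_j\}$, so the resulting modulus a priori depends on the particular configuration, not merely on the a~priori bounds $A,L,N,\alpha_0,\beta_0$. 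Theorem~\ref{teoF} asserts uniformity over \emph{all} configurations satisfying {\bf (A1)}; the abstract argument cannot deliver this. The paper instead proves Theorem~\ref{Unifcont}, a fully quantitative statement establishing (iii) and (iv) simultaneously with the explicit modulus $\sigma_2(\cdot)=C_*\sigma^N(\cdot)$, where $\sigma$ is the iterated-logarithmic modulus of \eqref{modcont}; this is precisely the step that makes the constant $C$ depend only on the a~priori data and is where the bulk of the technical work lies (the chain-propagation, Proposition~\ref{QEUCSk}, and the explicit Rongved asymptotics entering~\eqref{small13}--\eqref{small17}). Your proposal treats this as a freebie, which it is not.

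A smaller point: for (ii) you invoke ``uniform continuity of $F'$ on $\mathcal{A}$-bounded sets'' by general continuous dependence, but Proposition~\ref{propBV} again requires an explicit modulus $\sigma_1$ (it enters through $\delta_0=\sigma_1^{-1}(q_0/2)$). The paper proves $F'$ is Lipschitz with a constant $C_{F'}$ controlled by the a~priori data (Proposition~\ref{deriv}), which is both necessary and easy from the energy estimate \eqref{3.5}; you should state and justify that rather than a qualitative continuity claim. Your sketch of (v), while vaguer than the paper's iterative construction in Proposition~\ref{Stimabasso1} (which works directly with the Green's-function quantity $\mathcal{T}_i(y,z)$ rather than with boundary data taken to a singular limit), is conceptually consistent with it; the essential point you correctly flag is that the propagation through the chain costs a unique-continuation exponent at each interface, which must be tracked to get the $N$-dependence of~$q_0$.
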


Notice that Theorem \ref{teoF} means that $F$ is invertible on ${\bold K}$ and its inverse is Lipschitz continuous.

In Section \ref{sec5} we will show that the forward map of definition \ref{forwardmap} satisfies all the assumptions of Proposition \ref{propBV}
for $\mathcal{A}$ and ${\bold K}$ defined as in (\ref{mathcalA}) and (\ref{mathcalK}) respectively and $\mathcal{B}$ is the space of bilinear form on $H^{1/2}_{co}(\Sigma)\times H^{1/2}_{co}(\Sigma)$. Then, Theorem \ref{teoF} is a consequence of Proposition \ref{propBV}.

\section{Preliminary results}\label{prelres}
\subsection{Further notation and definitions}
In order to prove the main theorem we need to introduce some further notation and definitions.

\textbf{Construction of an augmented domain and extension of $\CC$.}

\noindent First we extend the domain $\Omega$ to a new domain $\Omega_0$ such that $\partial \Omega_0$ is of Lipschitz class and $B_{r_0/C}(P_1)\cap\Sigma\subset\Omega_0$, for some suitable constant $C\geq1$ depending only on $L$. We proceed as in \cite[Sect. 6]{A-R-R-V}.
Set
\begin{equation}
  \label{pho_1}
  \rho_1=r_0/C_L, \hbox{  where  } C_L=\frac{3\sqrt{1+L^{2}}}{L},
  \end{equation}
and define, for every $x'\in B'_{\frac{r_0}{3}}$
\[
 \psi^+(x')=
  \left\{ \begin{array}{ll}
  \frac{\rho_1}{2} &\mbox{for }
  |x'|\leq \frac{\rho_1}{4L},\\
  & \\
  \rho_1-2L|x'| &\mbox{for }
  \frac{\rho_1}{4L}< |x'|\leq \frac{\rho_1}{2L},\\
  & \\
  0 &\mbox{for }
  |x'|> \frac{\rho_1}{2L}.
  \end{array}\right.
\]
Observe that, for every $x'\in B'_{\frac{r_0}{3}}$,
$|\psi^+(x')|\leq \frac{\rho_1}{2}$ and  $|\nabla_{x'}\psi^+(x')|\leq 2L$.
Next, we denote by
\[
  D_0=\left\{x=(x',x_3) \in Q_{r_0/3,L}\quad | \quad
  0\leq x_{3}<\psi^+(x')
  \right\},
\]
\[
  \Omega_0=\Omega\cup D_0.
\]
It is straightforward to verify that
\begin{description}
\item{i)}
$\Omega_0$ has Lipschitz boundary with constants $\frac{r_0}{3}$, $3L$.
\item{ii)}
\[
  \Omega_0\supset Q_{r_0/4LC_L,L}.
\]
\end{description}

\bigskip

Let $\CC$ be an isotropic tensor that satisfies assumption {\bf (A2)}. We still denote by $\CC$ its  extension to $\Omega_0$ such that
$\CC_{|D_0}A=2\hat{A}$ for every $3\times3$ matrix $A$.
This extended tensor is still an isotropic elasticity tensor
of the form
\begin{equation}\label{isotr1}
\CC=\sum_{j=0}^N\CC_j \chi_{D_j}(x)
\end{equation}
where each $\CC_j$ for $j=0,\ldots,N$ has Lam\'{e} parameters satisfying \eqref{strongconv}.

\bigskip

\textbf{Construction of a walkway}

Let us fix $j\in\{1,...N\}$ and let
$D_{j_1},\ldots,D_{j_{M}}$ be a chain of domains connecting $D_1$ to $D_j$. For the sake of brevity set $D_k=D_{j_k}$, $k=1,...M$.

By \cite[Prop. 5.5]{A-R-R-V} there exists $C'_L\geq1$ depending on $L$ only, such that  $\left(D_{k}\right)_h$ is connected for every $k\in\{1,...M\}$ and
 every $h\in(0,r_0/C'_L)$.
Denote by
\begin{equation}
  \label{h_0}
  h_{0}=\min \left \{ \frac{r_{0}}{6},\frac{r_{0}}{C_{L}^{\prime}},\frac{\rho_1
}{8\sqrt{1+4L^{2}}}\right \},
\end{equation}
where $\rho_1$ is as in (\ref{pho_1}).

\bigskip

Let us introduce the following sets:
\begin{description}
\item{i)}
$Q_{(k)}$, $k=1,\ldots,M$, is the cylinder centered at $P_k$  such that by a rigid transformation of coordinates under which $P_k=0$ and $\Sigma_k$ belongs to
the plane $ \{(x',0)\}$, is given by $Q_{(k)}=Q_{\rho_1/4L,L}$. Moreover we denote $Q^{-}_{(M)}=Q_{(M)}\cap D_{M-1}$;
\item{ii)}
$\mathcal{K}$ is the interior part of the set $\bigcup_{i=0}^{M-1} \overline{D}_{i}$;
\item{iii)}
$\mathcal{K}_{h}=\bigcup_{i=0}^{M-1}\left(D_{i}\right)_h$, for every $h\in(0,h_0)$;
\item{iv)}
\begin{equation}
  \label{Corridoio}
  \widetilde{\mathcal{K}}_{h}=\mathcal{K}_{h}\cup Q^-_{(M)}\cup\bigcup_{k=1}^{M-1}Q_{(k)};
\end{equation}
\item{v)}
\[
  K_0=\left\{x \in D_0\quad | \quad
  dist(x,\partial\Omega)>\frac{\rho_1}{8}  \right\}.
\]
\end{description}
It is straightforward to verify that $\mathcal{K}_{h}$ is connected and of Lipschitz class for every $h\in(0,h_0)$ and that (in a suitable coordinate system)
\begin{equation}
  \label{palla in K_0}
  K_{0}\supset B_{\rho _{1}/4L}^{\prime }(P_{1})\times \left( \frac{\rho _{1}}{8}
  ,\frac{\rho _{1}}{4}\right).
\end{equation}
\subsection{Existence of singular solutions}\label{sec4}
\subsubsection{Fundamental solution in the biphase laminate}\label{subsec4.e}
In our proof of Lipschitz stability estimates, as in the approach used by
Alessandrini and Vessella for the conductivity equation \cite{AV}, a crucial
role is played by singular solutions for the Lam\'e system. As a matter of fact we are not only interested in the existence of such singular solutions, but also in their asymptotic behavior close to the interfaces.

In the scalar case, this tool is granted by the existence of  Green functions and by explicit expressions for solutions in the presence of an interface.

Unfortunately, the existence of the Green matrix cannot be inferred for elliptic systems with bounded coefficients in dimension 3 or higher.

Moreover, whereas for the scalar (even complex valued) conductivity equation is fairly easy to get a solution of the equation close to a flat interface by using the fundamental solution for the Laplace equation and suitable reflection arguments, this seems not to be possible for the Lam\'e system.

In order to construct singular solutions, we make use of special fundamental solutions constructed by Rongved in \cite{R} for  isotropic  biphase laminates.

Consider the isotropic tensor
\[\CC_b=\CC^+\chi_{\RR^3_+}+\CC^-\chi_{\RR^3_-}\]
where
$\CC^+$ and $\CC^-$ are constant isotropic elastic tensors given by
\[\CC^+=\lambda I_3\otimes I_3 +2\mu{\II}_{sym},\quad\CC^-=\lambda^\prime I_3\otimes I_3 +2\mu^\prime{\II}_{sym},\]
with $\lambda$ and $\mu$ and $\lambda^\prime$ and $\mu^\prime$ satisfy (\ref{strongconv}).
Denote Poisson's parameters by $\nu$ and $\nu^\prime$.

In \cite{R} an explicit formula for a fundamental solution $\Gamma:\{(x,y) \,:\,x\in\RR^3, y\in \RR^3, x\neq y\}\to \RR^{3\times 3}$ of
\[
\mbox{div}\left(\CC_b\nabhat \Gamma(\cdot,y)\right)=\delta_yI_3,
\]
is given. Here $\delta_y$ is the Dirac distribution concentrated at $y$.

This explicit formula is quite involved and some alternative formulations in more convenient tensor form have been proposed, for example in \cite{MeRe}.

For the purpose of the present work we need to point out some properties of biphase fundamental solution.
First of all, it is a fundamental solution, in the sense that
$\Gamma(x,y)$, is continuous in $\{(x,y)\in
\RR^3\times\RR^3:x\neq y\}$ and $\Gamma(x,\cdot)$ is locally integrable in $\RR^3$, for all $x\in \RR^3$, and,
 for every vector valued function $\phi\in C^{\infty}_0(\RR^3)$, we have
\[
\int_{\RR^3}\CC_b\nabhat\Gamma(\cdot,y):\nabhat\phi=\phi(y).
\]
Furthermore, for every $x,y\in\RR^3$, $x\neq y$, we have
\begin{equation}\label{sym}
\Gamma(x,y)=\Gamma(y,x)^T,
\end{equation}
\[
|\Gamma(x,y)|\leq \frac{C}{|x-y|},
\]
and, for any $r>0$,
\begin{equation}\label{boundrongved2}
\|\nabla \Gamma(\cdot,y)\|_{L^2(\RR^3\backslash B_r(y))}\leq \frac{C}{(rr_0^3)^{1/2}},
\end{equation}
where $C$ depends on $\alpha_0,\beta_0$ only.

We will also need to use the  explicit representation of  the some components of biphase  fundamental solution.
In particular, we will make use of explicit expression of the third column of $\Gamma$.
For $x=(x_1,x_2,x_3)$ with $x_3>0$ and $y=(0,0,c)$ with $c>0$, from \cite{R} we have,
\begin{equation}\label{rongelisa}
\Gamma(x,y)\cdot e_3=\frac{3-4\nu}{4(1-\nu)}\left(
\begin{array}{c}0\\0\\B\end{array}\right)
-\frac{1}{4(1-\nu)}\left[x_3\left(\begin{array}{c}B_{x_1}\\B_{x_2}\\B_{x_3}\end{array}\right)+\left(
\begin{array}{c}\beta_{x_1}\\ \beta_{x_2} \\ \beta_{x_3}\end{array}\right)\right],
\end{equation}
where
\[B=\frac{1}{4\pi\mu}\left\{\frac{1}{R_1}+\alpha \left[\frac{3-4\nu}{R_2}+\frac{2c(x_3+c)}{R_2^3}\right]\right\},\]
\[\beta=-\frac{1}{4\pi\mu}\left\{\frac{c}{R_1}+\alpha\left[\frac{c(3-4\nu)}{R_2}-\gamma\log(R_2+x_3+c)\right]\right\},\]
with
\begin{equation}\label{alpha}
\alpha=F_1(\mu,\mu^\prime,\nu):=\ds{\frac{\mu-\mu^\prime}{\mu+(3-4\nu)\mu^\prime}},
\end{equation}
\begin{equation}\label{gamma}
\gamma=F_2(\mu,\mu^\prime,\nu,\nu^\prime):=\ds{\frac{4(1-\nu)\mu\left[(1-2\nu)
(3-4\nu^\prime)-2\frac{\nu-\nu^\prime}{\mu-\mu^\prime}\mu^\prime\right]}
{\mu^\prime+(3-4\nu^\prime)\nu},}
\end{equation}
\[R_1=\left(x_1^2+x_2^2+(x_3-c)^2\right)^{1/2},\mbox{ and }R_2=\left(x_1^2+_2^2+(x_3+c)^2\right)^{1/2}.\]
It is worth noticing that, for $\CC^+=\CC^-$, the matrix $\Gamma$ coincides with the free space fundamental matrix for constant isotropic elasticity tensor.

\subsubsection{Singular solutions}\label{subsec4.3}
With the aid of the biphase fundamental solution $\Gamma$, let us construct singular solutions with singularities in some subset of the domain. Since $\Gamma$ is defined only in the case of a flat interface, we will need to keep away from interfaces that are not flat.

For this reason we need to set the following notation: let $\mathcal{F}$ be the union of the flat parts of  $\cup_{j=0}^N\der D_j$. By flat parts we intend that they can be represented as the graphs of a constant function in at least a ball of radius $\frac{r_0}{3}$ (as $\Sigma_k$ in assumption \textbf{(A1)}). Let  $\mathcal{D}=\cup_{j=0}^N\der D_j\setminus \mathcal{F}$. The set $\mathcal{D}$ contains the non flat parts of the interfaces.

 Let $\CC=\sum_{j=0}^N\CC_j\chi_{D_j}$ with tensors $\CC_j$  satisfying \textbf{(A2)} for all $j$.
 Let $y\in \om_0\backslash\mathcal{D}$ and let $r=\min (r_0/4,\textrm{dist}(y,\mathcal{D}\cup \der\om_0))$. Then, in the sphere $B_r(y)$ either $\CC$ is constant, $\CC=\CC_j$ or, by a suitable choice of the coordinate system,
 $\CC=\CC_j+(\CC_{j+1}-\CC_j)\chi_{\{x_3>a\}}$ for some $j=0,1,\cdots,N$ and some $a$ with $|a|<r$.
Let
\[
   \CC_y= \left\{\begin{array}{rcl}
            &\CC_j \quad&\text{ if }\CC=\CC_j \text{ in }B_r(y),\\
             & \CC_j+(\CC_{j+1}-\CC_j)\chi_{\{x_3>a\}}& \text{ otherwise},
           \end{array}\right.
\]
and consider the biphase fundamental solution to
\[
\mbox{div}( \CC_y\nabhat \Gamma(\cdot, y))=\delta_y I_3\text{ in }\RR^3.
\]
\begin{prop}\label{Green}
Let $\om_0$ and  $\CC$ satisfy assumptions \textbf{(A1)} and \textbf{(A2)}.
Then, for $y\in \om_0\backslash\mathcal{D}$, there exists a unique function $G(\cdot,y)$, continuous in $\om\setminus\{y\}$
such that
\[
\int_{\om_0}\CC\nabhat G(\cdot,y):\nabhat \phi=\phi(y),\mbox{ for every }\phi\in C^\infty_0(\om_0).
\]
and such that
\[
G(\cdot,y)=0\quad \text{ on }\der\om_0.
\]
Furthermore, if $\text{ dist}(y, \mathcal{D}\cup\der\om_0))\geq r_0/c_1$ for some $c_1>1$,
\begin{equation}\label{energy}
\|G(\cdot,y)-\Gamma(\cdot, y)\|_{H^1(\om_0)}\leq Cr_0^{-1}
\end{equation}
\begin{equation}\label{green1}
    \|G(\cdot,y)\|_{H^1(\om_0\setminus B_r(y))}\leq C (rr_0)^{-1/2},
\end{equation}
where $C$ depends $\alpha_0$, $\beta_0$, $A$, $L$ and, increasingly, on $c_1$.
Furthermore,
\begin{equation}\label{symgreen}
G(x,y)=G(y,x)^T\quad\mbox{for every }x,y\in \om_0\backslash\mathcal{D}.
\end{equation}
\end{prop}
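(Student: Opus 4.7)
The plan is to construct $G(\cdot,y)$ as a perturbation of the biphase fundamental solution $\Gamma(\cdot,y)$ associated to the local model coefficient $\mathcal{C}_y$, adding a regular corrector that absorbs both the mismatch between $\mathcal{C}$ and $\mathcal{C}_y$ away from $y$ and the non-homogeneous boundary trace. Specifically, I would look for $R(\cdot,y)\in H^1(\Omega_0)$ solving the Dirichlet problem
\[
\mbox{div}\bigl(\mathcal{C}\,\nabhat R(\cdot,y)\bigr) = \mbox{div}\bigl((\mathcal{C}-\mathcal{C}_y)\,\nabhat \Gamma(\cdot,y)\bigr) \quad \text{in } \Omega_0, \qquad R(\cdot,y) = \Gamma(\cdot,y) \text{ on } \partial \Omega_0,
\]
and then set $G(\cdot,y):=\Gamma(\cdot,y)-R(\cdot,y)$. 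Because $\mathcal{C}-\mathcal{C}_y$ vanishes on $B_r(y)$ with $r=\min(r_0/4,\mbox{dist}(y,\mathcal{D}\cup\partial\Omega_0))$, and since by (\ref{boundrongved2}) one has $\nabhat\Gamma(\cdot,y)\in L^2(\Omega_0\setminus B_r(y))$, the source $(\mathcal{C}-\mathcal{C}_y)\,\nabhat\Gamma(\cdot,y)$ belongs to $L^2(\Omega_0)$; when $\mbox{dist}(y,\partial\Omega_0)\geq r_0/c_1$ the trace $\Gamma(\cdot,y)|_{\partial\Omega_0}$ is well-defined in $H^{1/2}(\partial\Omega_0)$. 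Existence and uniqueness of $R(\cdot,y)$ then follow from Lax--Milgram applied to the standard weak formulation, using Korn's inequality together with the strong convexity (\ref{strongconv}) of $\mathcal{C}$; uniqueness of $G(\cdot,y)$ follows from that of $R(\cdot,y)$.

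For the energy bound (\ref{energy}), I would test the weak equation for $R-\widetilde\Gamma$ (with $\widetilde\Gamma$ any $H^1(\Omega_0)$-extension of the boundary data $\Gamma(\cdot,y)|_{\partial\Omega_0}$) against itself, so that Korn plus strong convexity yield
\[
\|R(\cdot,y)\|_{H^1(\Omega_0)} \leq C\bigl(\|(\mathcal{C}-\mathcal{C}_y)\,\nabhat\Gamma(\cdot,y)\|_{L^2(\Omega_0)} + \|\widetilde\Gamma\|_{H^1(\Omega_0)}\bigr).
\]
Both terms on the right are controlled by $\|\nabhat\Gamma(\cdot,y)\|_{L^2(\Omega_0\setminus B_{r_0/c_1}(y))}$, which by (\ref{boundrongved2}) is $\leq Cr_0^{-1}$ in the normalized $H^1$-norm convention of the paper; this proves (\ref{energy}). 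The bound (\ref{green1}) then follows from the triangle inequality
\[
\|G(\cdot,y)\|_{H^1(\Omega_0\setminus B_r(y))} \leq \|\Gamma(\cdot,y)\|_{H^1(\Omega_0\setminus B_r(y))}+\|R(\cdot,y)\|_{H^1(\Omega_0)},
\]
applying (\ref{boundrongved2}) to the first term and (\ref{energy}) to the second.

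Continuity of $G(\cdot,y)$ on $\Omega\setminus\{y\}$ combines the explicit continuity of $\Gamma(\cdot,y)$ away from its singularity with the local H\"{o}lder regularity of $R(\cdot,y)$, which, being a finite-energy weak solution to a piecewise constant strongly elliptic system with Lipschitz interfaces under assumption \textbf{(A2)}, is covered by the results of \cite{CKVC} and \cite{LN} collected in paragraph \ref{reg}. The symmetry identity $G(x,y)=G(y,x)^T$ is the standard consequence of the major symmetry of $\mathcal{C}$: formally, using $G(\cdot,x)\xi$ as test vector in the weak identity that defines $G(\cdot,y)$ and $G(\cdot,y)\eta$ in the one for $G(\cdot,x)$ gives $\eta\cdot G(x,y)\xi=\xi\cdot G(y,x)\eta$ for all $\xi,\eta\in\mathbb{R}^3$; a standard mollification/exhaustion argument around the two isolated singularities $x$ and $y$ makes this rigorous.

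The main obstacle I anticipate is treating the two regimes of $\mathcal{C}_y$ uniformly in the position of $y$: when $y$ sits close to a flat interface, $\mathcal{C}_y$ is genuinely biphase and one must rely on Rongved's explicit representation both to ensure that $\Gamma(\cdot,y)|_{\partial\Omega_0}$ lies in $H^{1/2}$ with uniform bounds and to justify (\ref{boundrongved2}) with constants independent of the placement of $y$ relative to the interface; when $y$ is away from every interface, $\mathcal{C}_y$ collapses to a single constant tensor and one recovers the classical Kelvin-type fundamental solution. The remaining subtle point is the regularity of $R$ across the non-flat Lipschitz portions of $\mathcal{D}$, which is precisely why the statement asserts continuity only on $\Omega\setminus\{y\}$ rather than global H\"{o}lder continuity.
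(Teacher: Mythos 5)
Your construction $G(\cdot,y)=\Gamma(\cdot,y)-R(\cdot,y)$ via the auxiliary Dirichlet problem for the corrector, the Lax--Milgram/Korn energy estimate yielding \eqref{energy}, and the triangle inequality giving \eqref{green1} reproduce the paper's proof of Proposition \ref{Green} essentially verbatim: the paper writes $G=\Gamma+w$ with $w=-\Gamma(\cdot,y)$ on $\partial\Omega_0$, which is your $R=-w$ after relabelling, and in fact your sign on the right-hand side of the corrector equation is the internally consistent one, while the paper's system \eqref{w} carries an apparent sign slip on the source term. Your remarks on continuity (via the regularity of \cite{CKVC}, \cite{LN}) and on symmetry (standard potential-theoretic duality) likewise match the paper's treatment.
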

\begin{proof}
Let us set
\[
G(x,y):=\Gamma(x,y)+w(x,y),
\]
where $w$ is solution to
\begin{equation}\label{w}
   \left\{\begin{array}{rcl}
             \mbox{div}\left(\CC\nabhat_x w(\cdot,y)\right)&= &\mbox{div}((\CC-\CC_y)\nabhat \Gamma(\cdot,y))\quad\mbox{in}\quad\om_0 \\
             w(\cdot,y)&= &-\Gamma(\cdot,y)  \quad\mbox{on}\quad\der\om_0.\\
           \end{array}
    \right.
 \end{equation}
Since $\CC-\CC_y=0$ in $B_r(y)$ and $\Gamma(\cdot,y)$ is
smooth and bounded in $\om_0\backslash B_r(y)$, then $f=(\CC-\CC_y)\nabhat \Gamma(\cdot,y)\in L^2(\om_0)$, $\mbox{div} f\in H^{-1}(\om_0)$ and $\Gamma(\cdot,y)|_{\der\om_0}\in H^{1/2}(\der\om_0)$, hence this problem has a unique solution $w\in H^1(\om_0)$.

Furthermore, if $\text{ dist}(y, \mathcal{D}\cup\der\om_0))\geq r_0/c_1$ for some $c_1>1$,
\[
\|w(\cdot,y)\|_{H^1(\om_0)}\leq C(\|f(\cdot,y)\|_{H^{-1}(\om_0)}+\|\Gamma(\cdot,y)\|_{H^{1/2}(\der\om_0)})\leq C
r_0^{-1}\]
where $C$ depends on $\alpha_0$, $\beta_0$ $A$ and $c_1$. This proves (\ref{energy}).
Estimate (\ref{green1}) follows from (\ref{energy}) and from (\ref{boundrongved2}).

The symmetry of function $G$ follows by standard arguments of potential theory \cite{E}.
\end{proof}
\subsubsection{Two useful properties of the biphase fundamental solution.}
Let $\CC_b$, $\overline{\CC}_b$ be given by
\[\CC_b=\CC^+\chi_{\RR^3_+}+\CC^-\chi_{\RR^3_-},\quad\overline{\CC}_b=\CC^+\chi_{\RR^3_+}+\overline{\CC}^{\,-}\chi_{\RR^3_-},\]
where $\CC^+$, $\CC^-$ and $\overline{\CC}^{\,-}$ are constant and strongly convex isotropic tensors whose Lam\'e  coefficients satisfy (\ref{strconvlame})

Let $\Gamma_{\CC_b}$ and $\Gamma_{\overline{\CC}_b}$ be the biphase fundamental solutions relative to operators  $\mbox{div}\left(\CC_b\nabhat\cdot\right)$ and
$\mbox{div}\left(\overline{\CC}_b\nabhat\cdot\right)$, respectively.
\begin{prop}\label{trick}
 For every $l,m\in\mathbb{R}^3$ and every $y,z\in\mathbb{R}^3_{+}$, $y\neq z$ we have
\[
 \int_{\mathbb{R}^3_{-}}\!\!\!\!
 \left(\CC_b-\overline{\CC}_b\right)\nabhat\Gamma_{\CC_b}(\cdot,y)\,l:
 \nabhat\Gamma_{\overline{\CC}_b}(\cdot,z)\,m
 =\left(\Gamma_{\CC_b}(y,z)-\Gamma_{\overline{\CC}_b}(y,z)\right)m\cdot l.
\]
\end{prop}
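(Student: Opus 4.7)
The plan is to carry out a generalized Alessandrini-type identity for fundamental solutions, working separately on $\RR^3_-$ (where the integrand is supported) and $\RR^3_+$ (where the Dirac sources sit), and connecting the two via the transmission conditions for the biphase system.

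Set $u := \Gamma_{\CC_b}(\cdot, y)\, l$ and $v := \Gamma_{\overline{\CC}_b}(\cdot, z)\, m$. Since $y, z \in \RR^3_+$ and $y\neq z$, both $u$ and $v$ are classical solutions of their respective biphase Lam\'e systems away from the two source points; in particular they are smooth in a neighbourhood of $\overline{\RR^3_-}$, decay at infinity thanks to the pointwise bound $|\Gamma|\leq C/|x-y|$ together with \eqref{boundrongved2}, and their normal tractions $(\CC_b \nabhat u)\, e_3$, $(\overline{\CC}_b \nabhat v)\, e_3$ are continuous across $\{x_3=0\}$ by the distributional transmission condition inherent to a fundamental solution of $\text{div}(\CC_b\nabhat\cdot)$.

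\textbf{Step 1 (from $\RR^3_-$ to the interface).} On $\RR^3_-$ the coefficients are the constant tensors $\CC^-$, $\overline{\CC}^-$, and $u$, $v$ are classical solutions of $\text{div}(\CC^- \nabhat u)=0$ and $\text{div}(\overline{\CC}^- \nabhat v)=0$. I integrate by parts on $B_R\cap\RR^3_-$, let $R\to\infty$ (the integral on $\partial B_R$ vanishes by the decay of $\Gamma$ and $\nabla\Gamma$), and use the symmetry of $\overline{\CC}^-$ to swap $\nabhat u$ and $\nabhat v$ in the second piece, getting
\[
  I := \int_{\RR^3_-}(\CC^- - \overline{\CC}^-)\,\nabhat u : \nabhat v = \int_{\{x_3=0\}} \!\bigl[(\CC^- \nabhat u)\, e_3 \cdot v - (\overline{\CC}^- \nabhat v)\, e_3 \cdot u\bigr].
\]
The transmission conditions allow me to rewrite the two stresses using $\CC^+$ (i.e.\ the traces from above).

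\textbf{Step 2 (from the interface to the singularities).} On $\RR^3_+$ the two tensors coincide and equal $\CC^+$, but $u$ and $v$ carry Dirac sources at $y$ and $z$. I integrate by parts on $(B_R\cap\RR^3_+)\setminus(\overline{B_\varepsilon(y)}\cup\overline{B_\varepsilon(z)})$ and send first $R\to\infty$ and then $\varepsilon\to 0$. The volume divergence term vanishes because we have excised the source, the outer boundary term vanishes by decay, the small sphere around $z$ contributes $0$ (as $\nabhat u$ is bounded there, $|v|=O(\varepsilon^{-1})$ and $|\partial B_\varepsilon(z)| = O(\varepsilon^2)$), and the small sphere around $y$ contributes $v(y)\cdot l$ by the distributional identity defining $\Gamma_{\CC_b}(\cdot,y)$ applied on $B_\varepsilon(y)$. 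Interchanging the roles of $u$ and $v$, the analogous identity yields $u(z)\cdot m$ at the sphere around $z$. Equating the two expressions (they coincide, by the major symmetry of $\CC^+$, with $\int_{\RR^3_+}\CC^+\nabhat u:\nabhat v$) gives
\[
  \int_{\{x_3=0\}}\!\bigl[(\CC^+ \nabhat u)\, e_3 \cdot v - (\CC^+ \nabhat v)\, e_3 \cdot u\bigr] = u(z)\cdot m - v(y)\cdot l.
\]

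\textbf{Step 3 (identification).} Combining Steps~1 and~2, $I = u(z)\cdot m - v(y)\cdot l$. The symmetry \eqref{sym} gives $u(z)\cdot m = \bigl(\Gamma_{\CC_b}(z,y)\,l\bigr)\cdot m = \Gamma_{\CC_b}(y,z)\,m\cdot l$ and $v(y)\cdot l = \Gamma_{\overline{\CC}_b}(y,z)\, m\cdot l$, which is the asserted equality.

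The main obstacle is the careful bookkeeping of orientations and signs in the two integrations by parts on unbounded domains, together with the justification of the limits at infinity and at the two shrinking spheres. The contributions at infinity are absorbed by the decay $|\Gamma|=O(|x|^{-1})$ and the energy bound \eqref{boundrongved2}; the contributions at the small spheres are evaluated via the distributional definition of the fundamental solution, exactly as in the classical potential-theoretic derivation of Alessandrini's identity for the scalar conductivity case.
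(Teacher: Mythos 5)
Your proof is correct, but it follows a noticeably different route from the paper's. The paper's argument hinges on a short but clever observation: since $\CC_b$ and $\overline{\CC}_b$ coincide (both equal $\CC^+$) on $\RR^3_+$, the integrand $\left(\CC_b-\overline{\CC}_b\right)\nabhat u:\nabhat v$ vanishes identically there, so the integral over $\RR^3_-$ equals, for every small $\varepsilon>0$, the integral over all of $\RR^3\setminus\left(B_\varepsilon(y)\cup B_\varepsilon(z)\right)$. One then integrates $\int\CC_b\nabhat u:\nabhat v$ and $\int\overline{\CC}_b\nabhat u:\nabhat v$ by parts separately on that punctured domain; the only boundary contributions are the two small spheres, which in the limit $\varepsilon\to 0$ deliver $-v(y)\cdot l$ and $-u(z)\cdot m$ respectively, and the interface $\{x_3=0\}$ never appears explicitly. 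You instead integrate by parts on $\RR^3_-$ and $\RR^3_+\setminus(\text{balls})$ separately, exhibit the resulting traction traces on $\{x_3=0\}$, and cancel them by invoking the transmission conditions of the biphase fundamental solution. Your route makes the physical mechanism (continuity of traction across the interface) explicit, which is instructive, and it would generalize more readily if the two tensors also differed on $\RR^3_+$; the cost is an extra step and a need to justify the transmission conditions, which the paper's shortcut avoids entirely. A small stylistic point: the energy bound \eqref{boundrongved2} is an $L^2$ statement, so to kill the $\partial B_R$ contribution cleanly one should either cite the pointwise decay $|\nabla\Gamma|=O(|x|^{-2})$ or argue along a suitable sequence of radii; also, the parenthetical claim in Step 2 that both expressions ``coincide with $\int_{\RR^3_+}\CC^+\nabhat u:\nabhat v$'' is fine because that integral does converge absolutely (each factor is singular at a different point), but it is cleaner to phrase the identification at the level of the $\varepsilon$-punctured integrals before taking the limit, exactly as the paper does.
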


\begin{proof}
Let us fix $l,m\in\mathbb{R}^3$ and $y,z\in\mathbb{R}^3_{+}$, $y\neq z$ and denote by
\[
u=\Gamma_{\CC_b}(\cdot,y)\,l \quad\mbox{, } v=\Gamma_{\overline{\CC}_b}(\cdot,z)\,m.
\]
Let $\varepsilon_0>0$ such that $B_{\varepsilon_0}(y),B_{\varepsilon_0}(z)\subset \mathbb{R}^3_{+}$ and $B_{\varepsilon_0}(y)\cap
B_{\varepsilon_0}(z)=\emptyset$. Since $\CC_b=\overline{\CC}_b$ in $\mathbb{R}^3_{+}$, we get trivially
\[
 \int_{\mathbb{R}^3_{+}\setminus\left(B_{\varepsilon}(y)\cup B_{\varepsilon}(z)\right)}\!\!\!\!\!\!\!\!\!\!\!\!\left(\CC_b-\overline{\CC}_b\right)\nabhat u:\nabhat v
 =0, \quad\mbox{for every } \varepsilon\in(0,\varepsilon_0).
\]
Hence
\begin{equation}\label{Spsi}
 \int_{\mathbb{R}^3_{-}}\!\!\!\!  \left(\CC_b-\overline{\CC}_b\right)\nabhat u:
 \nabhat v =\int_{\mathbb{R}^3\setminus\left(B_{\varepsilon}(y)\cup B_{\varepsilon}(z)\right)}\!\!\!\!\!\!\!\!\!\!\!\!\left(\CC_b-\overline{\CC}_b\right)\nabhat u:\nabhat v,
 \quad\mbox{for every } \varepsilon\in(0,\varepsilon_0).
\end{equation}
Integration by parts in $\mathbb{R}^3\setminus\left(B_{\varepsilon}(y)\cup B_{\varepsilon}(z)\right)$ yields, for every $\varepsilon\in(0,\varepsilon_0)$,
\begin{equation}\label{Sintegral}
\int_{\mathbb{R}^3\setminus\left(B_{\varepsilon}(y)\cup B_{\varepsilon}(z)\right)}\!\!\!\!\!\!\!\!\!\!\!\!\!\!\!\!\CC_b\nabhat u:\nabhat v=-\int_{\der
B_{\varepsilon}(y)}\left(\CC_b\nabhat u\right)n\cdot v
-\int_{\der B_{\varepsilon}(z)}\left(\CC_b\nabhat u\right)n\cdot v,
\end{equation}
where $n$ is the outward unit normal to $\partial \left(B_{\epsilon}(y)\cup B_{\varepsilon}(z)\right)$.
Now by definition of fundamental solution we have
\begin{equation}\label{Slimit1}
\lim_{\varepsilon\rightarrow 0}\int_{\der B_{\epsilon}(y)}\!\!\left(\CC_b\nabhat u\right)n\cdot v=v(y)\cdot l
\,\,\mbox{ and }\,\,\,
\lim_{\varepsilon\rightarrow 0}\int_{\der B_{\epsilon}(z)}\!\!\left(\CC_b\nabhat u\right)n\cdot v=0.
\end{equation}
By \eqref{Sintegral} and \eqref{Slimit1} we have
\begin{equation}\label{Slimit3}
\lim_{\varepsilon\rightarrow 0}\int_{\mathbb{R}^3\setminus\left(B_{\varepsilon}(y)\cup B_{\varepsilon}(z)\right)}\!\!\!\!\!\!\!\!\!\!\!\!\CC_b\nabhat u:\nabhat v=-v(y)\cdot l.
\end{equation}
In a similar way,  we get
\begin{equation}\label{Slimit4}
\lim_{\varepsilon\rightarrow 0}\int_{\mathbb{R}^3\setminus\left(B_{\varepsilon}(y)\cup B_{\varepsilon}(z)\right)}\!\!\!\!\!\!\!\!\!\!\!\!\overline{\CC}_b\nabhat u:\nabhat v=-u(z)\cdot m.
\end{equation}
By \eqref{Spsi}, \eqref{Slimit3}, and \eqref{Slimit4} we have
\begin{equation*}
 \int_{\mathbb{R}^3_{-}}\!\!\!\!  \left(\CC_b-\overline{\CC}_b\right)\nabhat u:
 \nabhat v =u(z)\cdot m-v(y)\cdot l=\Gamma_{\CC}(z,y)\,l\cdot m - \Gamma_{\overline\CC}(y,z)\,m\cdot l.
\end{equation*}
  By \eqref{sym} the thesis follows.
\end{proof}

\begin{prop}\label{trick2}
Let $h,k$ be real numbers and let ${\HH}$ be the fourth order tensor
\[
{\HH}(x) =\left(h\, I_3\otimes I_3 +2k\,{\II}_{sym}\right)\chi _{\mathbb{R}
_{-}^{3}}(x).
\]
For every $l,m\in\mathbb{R}^3$ and every $y,z\in\mathbb{R}^3_{+}$, $y\neq z$, we have
\[
 \int_{\mathbb{R}^3_{-}}\!\!\!\!\HH\nabhat\Gamma_{\CC_b}(\cdot,y)\,l:\nabhat\Gamma_{\CC_b}(\cdot,z)\,m
 =\left(\frac{d}{dt}\Gamma_{\CC_b+t\HH}(y,z)m\cdot l\right)_{|_{t=0}}\!\!\!\!\!.
\]
\end{prop}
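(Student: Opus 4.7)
The plan is to obtain Proposition \ref{trick2} as a derivative at $t=0$ of the identity in Proposition \ref{trick}. Specifically, I would set
\[
\overline{\CC}_b := \CC_b + t\HH = \CC^+\chi_{\RR^3_+} + \bigl(\CC^- + t(hI_3\otimes I_3+2k\II_{sym})\bigr)\chi_{\RR^3_-},
\]
and observe that for $|t|$ small enough the perturbed tensor $\CC^- + t(hI_3\otimes I_3+2k\II_{sym})$ is still isotropic and strongly convex (since strong convexity is an open condition in Lamé parameters), so $\overline{\CC}_b$ has exactly the structure needed to apply Proposition \ref{trick}.

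Since $\CC_b - \overline{\CC}_b = -t\HH$, Proposition \ref{trick} gives, for every such $t\neq 0$,
\[
\int_{\RR^3_-} \HH\,\nabhat\Gamma_{\CC_b}(\cdot,y)\,l : \nabhat\Gamma_{\CC_b+t\HH}(\cdot,z)\,m
= \frac{\Gamma_{\CC_b+t\HH}(y,z)-\Gamma_{\CC_b}(y,z)}{t}\,m\cdot l.
\]
The whole proof then reduces to passing to the limit $t\to 0$ on both sides. On the right-hand side the limit is, by definition, $\bigl(\tfrac{d}{dt}\Gamma_{\CC_b+t\HH}(y,z)\bigr)_{t=0}\,m\cdot l$, provided that $t\mapsto \Gamma_{\CC_b+t\HH}(y,z)$ is differentiable at $0$ for fixed $y\neq z$ in $\RR^3_+$. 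This differentiability can be read off directly from Rongved's explicit formulas (as in \eqref{rongelisa}–\eqref{gamma}): for $y,z\in\RR^3_+$, the components of $\Gamma_{\CC_b+t\HH}(y,z)$ are built from $R_1,R_2$, logarithms, and the coefficients $\alpha=F_1$, $\gamma=F_2$, all of which are $C^\infty$ functions of the Lamé parameters on the range where strong convexity holds.

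The main work is justifying the limit on the left. Here I would use dominated convergence. For the pointwise limit, fix $x\in\RR^3_-$: the same explicit Rongved formulas show that $\nabhat\Gamma_{\CC_b+t\HH}(x,z)\to \nabhat\Gamma_{\CC_b}(x,z)$ as $t\to 0$ for every $x\neq z$, and in particular for all $x\in\RR^3_-$ (since $z\in\RR^3_+$ is away from the interface). For an integrable majorant, I would combine the pointwise bound $|\nabla\Gamma_{\CC_b+t\HH}(x,z)|\le C/|x-z|^2$ (following from the structure of the formulas, with $C$ uniform in $t$ for $|t|$ small because the Lamé parameters stay in a compact strongly convex set) with the analogous bound for $|\nabla\Gamma_{\CC_b}(x,y)|$; on $\RR^3_-$ both are locally bounded since $y,z\in\RR^3_+$, and together give a product bound of order $|x|^{-4}$ at infinity, which is integrable on $\RR^3_-$. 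Dominated convergence then yields the desired left-hand limit $\int_{\RR^3_-}\HH\,\nabhat\Gamma_{\CC_b}(\cdot,y)l:\nabhat\Gamma_{\CC_b}(\cdot,z)m$.

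The main obstacle I anticipate is precisely this last justification, i.e.\ obtaining uniform-in-$t$ pointwise bounds on $\nabla\Gamma_{\CC_b+t\HH}(\cdot,z)$ on $\RR^3_-$. Since the biphase fundamental solution is not given by a simple reflection, I would extract these either by carefully inspecting the Rongved formulas term by term (each summand being a rational function in $x$ and $t$, smooth for strongly convex parameters) or, more cleanly, by writing $\Gamma_{\CC_b+t\HH}(\cdot,z)=\Gamma_{\CC_b}(\cdot,z)+w_t$ with $w_t$ solving a variational problem whose energy is $O(t)$ and applying interior estimates away from $z$. Once this uniform control is in place, the rest of the argument is the bookkeeping above.
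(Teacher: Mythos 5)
Your proposal is correct and takes essentially the same approach as the paper: apply Proposition \ref{trick} with the second tensor equal to $\CC_b+t\HH$ (yielding an exact identity involving the difference quotient $\tfrac{1}{t}\bigl(\Gamma_{\CC_b+t\HH}(y,z)-\Gamma_{\CC_b}(y,z)\bigr)m\cdot l$), then pass to the limit $t\to0$ by dominated convergence using the pointwise convergence and a uniform $O(|x|^{-4})$ majorant read off from Rongved's explicit formulas. The only difference is bookkeeping (you put the $t$-dependent fundamental solution in the $z$-slot and apply dominated convergence directly to the integral, whereas the paper puts it in the $y$-slot and isolates an error term), which is immaterial.
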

\begin{proof}
Let us fix $l,m\in\mathbb{R}^3$ and $y,z\in\mathbb{R}^3_{+}$, $y\neq z$. Let $t_0$ be a positive number such that for every $t\in(-t_0,t_0)$
the tensor $\CC_b+t\HH$ is strongly convex.

Since $\HH(x)=0$ for  every $x\in\mathbb{R}^3_{+}$ we have trivially

\[
 \psi^{(l,m)}(y,z):=\int_{\mathbb{R}^3_{-}}\!\!\!\!\HH\nabhat\Gamma_{\CC_b}(\cdot,y)\,l:\nabhat\Gamma_{\CC_b}(\cdot,z)\,m=
 \int_{\mathbb{R}^3}\!\!\!\!\HH\nabhat\Gamma_{\CC_b}(\cdot,y)\,l:\nabhat\Gamma_{\CC_b}(\cdot,z)\,m.
\]

Hence, for every $t\in(-t_0,t_0)\setminus\{0\}$,
\begin{eqnarray}\label{psitrickS2}
 \psi^{(l,m)}(y,z)=\frac{1}{t}\int_{\mathbb{R}^3}\left((\CC_b+t\HH)-\CC_b\right)\nabhat\Gamma_{\CC_b}(\cdot,y)\,l:\nabhat\Gamma_{\CC_b}(\cdot,z)\,m=\\
\nonumber
=\frac{1}{t}\int_{\mathbb{R}^3}\left((\CC_b+t\HH)-\CC_b\right)\nabhat\Gamma_{\CC_b+t\HH}(\cdot,y)\,l:\nabhat\Gamma_{\CC_b}(\cdot,z)\,m-\\
-\int_{\mathbb{R}^3_{-}}\HH\left(\nabhat\Gamma_{\CC_b+t\HH}(\cdot,y)\,l-\nabhat\Gamma_{\CC_b}(\cdot,y)\,l\right):\nabhat\Gamma_{\CC_b}(\cdot,z)\,m.
\nonumber
\end{eqnarray}
By (\ref{psitrickS2}) and by Proposition \ref{trick} we get
\begin{eqnarray}\label{psitrickS3}
 \psi^{(l,m)}(y,z)&=&\frac{1}{t}\left((\Gamma_{\CC_b+t\HH}(y,z))-\Gamma_{\CC_b}(y,z)\right)\,m\cdot l\\
\nonumber
&-&\int_{\mathbb{R}^3_{-}}\HH\left(\nabhat\Gamma_{\CC_b+t\HH}(\cdot,y)\,l-\nabhat\Gamma_{\CC_b}(\cdot,y)\,l\right):\nabhat\Gamma_{\CC_b}(\cdot,z)\,m.
\end{eqnarray}
Now, by straightforward calculation on the biphase fundamental solution given in  \cite{R}, we have, for every $x\in\mathbb{R}^3_{-}$,
\begin{equation}\label{psitrickS4}
\lim_{t\rightarrow 0}
\left(\nabhat\Gamma_{\CC_b+t\HH}(x,y)\,l-\nabhat\Gamma_{\CC_b}(x,y)\,l\right)=0, \end{equation}
and, for every $t\in(-t_0,t_0)$,
\begin{equation}\label{psitrickS5}
\left \vert {\HH\left(\nabhat\Gamma_{\CC_b+t\HH}(x,y)\,l-\nabhat\Gamma_{\CC_b}(x,y)\,l\right):\nabhat\Gamma_{\CC_b}(x,z)\,m}\right \vert\leq C\min\{|x|^{-4},1\},
\end{equation}
where $C$ depends on $\alpha_0$, $\beta_0$, $y$, $z$, $l$ and $m$ only.
By \eqref{psitrickS4}, \eqref{psitrickS5}, and applying the dominated convergence theorem,  we get
\begin{equation}\label{psitrickS6}
\lim_{t\rightarrow 0}\int_{\mathbb{R}^3_{-}}\HH\left(\nabhat\Gamma_{\CC_b+t\HH}(.,y)\,l-\nabhat\Gamma_{\CC_b}(.,y)\,l\right):\nabhat\Gamma_{\CC_b}(.,z)\,m=0.
\end{equation}
Finally, by \eqref{psitrickS6} and \eqref{psitrickS3} the thesis follows.

\end{proof}

\subsection{Some estimates  for solutions to the Lam\'{e} system}\label{reg}
In this section we collect some properties concerning solutions to the linearized elasticity system with piecewise constant elasticity tensor that will be crucial to proe our main result.

First  we state  a regularity result for solutions of elliptic systems in composite materials from  \cite{LN} and \cite{CKVC}. Afterwords we use this result in order to obtain Proposition  \ref{Analytic}, which is then used to prove a quantitative estimate of unique continuation for solutions of  systems satisfying assumptions \textbf{(A1)} and \textbf{(A2)}.

\begin{prop}\label{regularity}
Let $\CC^+$ and $\CC^-$ be two isotropic elasticity tensors with constant Lam\'e coeffcients $\lambda$, $\mu$ and $\lambda^{\prime}$, $\mu^{\prime}$ respectively, satisfying assumption \textbf{(A2)}, let $R>0$ and $\CC_b=\CC^+\chi_{\RR^3_+}+\CC^-\chi_{\RR^3_-}$. 
Let $v\in H^1(B_R)$ be a weak solution to
\[
\mbox{div}(\CC_b\nabhat v ) =  0\mbox{ in }B_R.
\]
Then, for every multiindex $\beta^\prime$, $D_{x_\prime}^{\beta^\prime} v\in C^{0}(B_R)$ and $v\in C^{\infty}(\overline{B^{\pm}_R})$. Moreover for any $\delta>0$ and $k\geq 0$
\begin{equation}\label{regest}
R^k\|D^k v\|_{L^\infty(\overline{B^{\pm}}_{(1-\delta)R})}\leq CR^{-3/2}\left[\int_{B_R}|v|^2 dx\right]^{1/2}
\end{equation}
 where 
 $C=C(\delta,k,\alpha_0,\beta_0)$.
\end{prop}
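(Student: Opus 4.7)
The plan is to separate the argument into interior regularity (away from the interface $\Pi=\{x_3=0\}$) and transmission regularity (up to $\Pi$ from either side). Inside each half-ball $B_R^{\pm}$ the coefficients are constant, so the classical interior regularity for the homogeneous Lamé system yields $v\in C^{\infty}(B_R^{\pm})$ together with the quantitative Caccioppoli/Nirenberg-type bounds that turn the $L^2$ norm of $v$ into weighted $L^{\infty}$ bounds for $D^k v$ on $B_{(1-\delta)R}^{\pm}$ (using Sobolev embedding in dimension three). The only nontrivial point is therefore to control $D^k v$ up to $\Pi$ from each side.

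To handle this I would exploit the fact that $\mathbb{C}_b$ depends only on $x_3$, so the operator commutes with tangential translations. A difference-quotient argument in the $x'$ variables then shows that each tangential derivative $\partial_{x_j} v$, $j=1,2$, is again a weak $H^1$ solution of the same transmission system on a slightly smaller ball, with a Caccioppoli estimate
\[
\|\nabla \partial_{x_j} v\|_{L^2(B_{R'})}\leq \frac{C}{R-R'}\|\nabla v\|_{L^2(B_R)}.
\]
Iterating this in the tangential variables gives $D_{x'}^{\beta'} v\in H^1\subset L^{\infty}$ (actually $C^{0,1/2}$, hence continuous across $\Pi$) in balls $B_{(1-\delta)R}$ for every multi-index $\beta'$, with the desired $L^2\to L^{\infty}$ quantitative bound.

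For the normal derivatives, I would use the equation algebraically on each side of $\Pi$. Writing $\mathrm{div}(\mathbb{C}^{\pm}\widehat{\nabla} v)=0$ componentwise, the coefficient of $\partial_{x_3 x_3} v$ is, by the strong convexity assumption \eqref{strconvlame}, an invertible $3\times 3$ matrix (this is precisely the Legendre--Hadamard/Lopatinski condition along the $e_3$ direction for the isotropic Lamé tensor). Hence $\partial_{x_3 x_3} v^{\pm}$ can be solved as a linear combination of $\partial_{x'}^2 v^{\pm}$ and $\partial_{x_3}\partial_{x'} v^{\pm}$. A downward bootstrap based on the transmission conditions $v^{+}=v^{-}$ and $(\mathbb{C}^{+}\widehat{\nabla} v^{+})e_3=(\mathbb{C}^{-}\widehat{\nabla} v^{-})e_3$ on $\Pi$ (which determine $\partial_{x_3}v^{\pm}$ on $\Pi$ from $\partial_{x'}v^{\pm}$) together with the algebraic identity for $\partial_{x_3x_3}v^{\pm}$ then lets me express every mixed derivative $D^{k}v^{\pm}$ in terms of purely tangential derivatives, which were bounded in the previous step. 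This gives $v\in C^{\infty}(\overline{B_{(1-\delta)R}^{\pm}})$ and the estimate \eqref{regest}.

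The main obstacle is keeping track of the algebraic step for the normal derivatives: one must verify that the $3\times 3$ matrix multiplying $\partial_{x_3x_3}v$ in each half-ball is uniformly invertible with bounds depending only on $\alpha_0,\beta_0$, and that the transmission conditions can indeed be solved for the missing jet of $v$ on $\Pi$. Both facts are standard consequences of strong convexity of $\mathbb{C}^{\pm}$, so one could alternatively quote the general transmission regularity theorems of \cite{LN} and \cite{CKVC}, which carry out exactly this program for second-order elliptic systems with piecewise constant coefficients across a flat interface and provide the quantitative $L^2\to L^{\infty}$ bound with the stated dependence on $\delta,k,\alpha_0,\beta_0$.
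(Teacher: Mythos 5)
The paper does not prove this proposition at all: it quotes it directly from the transmission-regularity results of \cite{LN} and \cite{CKVC}, as the sentence preceding the statement makes explicit. Your sketch reconstructs precisely the argument those references carry out --- tangential difference quotients exploiting translation invariance of $\CC_b$ in $x'$, then an algebraic solve for $\partial^2_{x_3}v^\pm$ (the coefficient matrix is $\mathrm{diag}(\mu,\mu,\lambda+2\mu)$ on each side, uniformly invertible under \eqref{strconvlame} since $\lambda+2\mu=\tfrac13(2\mu+3\lambda)+\tfrac43\mu\geq\tfrac13\beta_0+\tfrac43\alpha_0$), combined with the two transmission conditions to propagate the full jet up to $\Pi$ from each side --- so the proposal and the paper's source amount to the same proof, and your fallback of citing \cite{LN},\cite{CKVC} is exactly what the authors do.

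One imprecision worth flagging: the line ``$D_{x'}^{\beta'}v\in H^1\subset L^\infty$'' is false as stated in dimension three, since $H^1(B_R)\not\hookrightarrow L^\infty(B_R)$. What is true is that once \emph{all} tangential derivatives lie in $H^1_{loc}$ one can invoke an anisotropic Sobolev embedding (or, more simply, finish the normal-derivative bootstrap first, land in $H^m(B^\pm_{(1-\delta)R})$ for every $m$, and only then apply the standard embedding $H^m\hookrightarrow C^{m-2}$ on each half-ball) to obtain the $L^2\to L^\infty$ bound and the continuity of $D_{x'}^{\beta'}v$ across $\Pi$. In other words, the pointwise bounds and the continuity across the interface should be extracted after, not before, the algebraic step; with that reordering your argument is complete.
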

Note that, by $v\in C^{\infty}\left(\overline{B^{\pm}_R}\right)$ we intend that $v_{|_{B_R^-}}$ has a $C^\infty$  extension to $\overline{B_R^-}$ and $v_{|_{B_R^+}}$ has a $C^\infty$  extension to $\overline{B_R^+}$.

Observe that, in particular,
we have
\begin{equation}\label{stimaLinfinito}
    \|v\|_{L^\infty(\overline{B}_{(1-\delta)R})}\!\!\leq CR^{-3/2}\left[\int_{B_R}|v|^2 dx\right]^{1/2}.
\end{equation}

\bigskip
Beside this regularity result, the principal ingredients for proving estimates of unique continuation are the three sphere inequality, some stability estimates for the Cauchy problem and a smallness propagation estimate in a cone. All these results holds for constant elasticity tensors and are precisely described in the Appendix.

\bigskip

The flatness assumption on interfaces allows us to get a better estimate for the Lipschitz constant. The reason is the fact that solutions to the system with piecewise constant elasticity tensor have analytic extension beyond the flat interface. This property is stated in the following Proposition.
\bigskip

Here we use the notation of section \ref{subsec4.e} and assumptions \textbf{(A1)} and \textbf{(A2)}.
\begin{prop}\label{Analytic}
Let $v\in H_{loc}^1(\mathcal{K})$ be a solution to
\begin{equation}
  \label{equazA}
\mbox{div}\left(\CC\widehat{\nabla}v\right)=0\quad\mbox{in}\quad \mathcal{K}.
\end{equation}
Let us fix $k\in\{0,\ldots,M\}$. Then there exist two positive constants $C_1$ and $C$, depending only on $\alpha_0$, $\beta_0$ and $L$, such that $v_{|D_k}$ can be
extended by a function $\widetilde{v}$ in the set $D_k\cup\Xi^{C_1}_{k+1}$, where
\begin{equation}
  \label{Xi}
    \Xi^{C_1}_{k+1}=\left \{ x\in D_{k}\cup D_{k+1}\cup \Sigma _{k}:\text{ }dist\left(
x,B_{r_{0}/6}\left( P_{k}\right) \cap \Sigma _{k}\right) <\frac{r_{0}}{4C_{1}
}\right \}
\end{equation}
and
\begin{equation}
\label{3.32}
    \|\tilde{v}\|_{L^\infty\left(\Xi^{C_1}_{k+1}\right)}\leq C\|v\|_{L^2(\Xi^{C_1/2}_{k+1})}.
    \end{equation}
\end{prop}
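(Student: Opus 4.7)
The plan is to fix a coordinate system in which $P_k=0$ and $\Sigma_k\cap Q_{r_0/3,L}=\{x_3=0\}$, so that in the cylinder $Q_{r_0/3,L}$ the tensor takes the biphase form $\CC_k\chi_{\{x_3>0\}}+\CC_{k+1}\chi_{\{x_3<0\}}$ (with $D_k\subset\{x_3>0\}$, say). Since $v|_{D_k}$ solves the constant-coefficient elliptic system $\mbox{div}(\CC_k\nabhat v)=0$ in the half-cylinder, it is real-analytic in the interior of $D_k$; what remains is to produce an analytic continuation $\tilde v$ of $v|_{D_k}$ that crosses $\Sigma_k$ into $D_{k+1}$ up to the prescribed distance $r_0/(4C_1)$. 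The natural tool is the Cauchy--Kovalevskaya theorem applied to $\mbox{div}(\CC_k\nabhat\tilde v)=0$, with Cauchy data $\tilde v(x',0)=v(x',0^+)$ and $\partial_{x_3}\tilde v(x',0)=\partial_{x_3}v(x',0^+)$ read off from the $D_k$ side.

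The core step, and the main obstacle, is to upgrade the $C^\infty$ tangential regularity granted by Proposition \ref{regularity} to real-analyticity of these Cauchy data, with a radius of analyticity controlled solely by $\alpha_0$, $\beta_0$, $L$. Since in these coordinates $\CC$ depends only on $x_3$, each tangential derivative $D^{\beta'}_{x'}v$ is itself a weak solution of $\mbox{div}(\CC\nabhat\cdot)=0$ in the cylinder; I would iterate the estimate (\ref{regest}) on a chain of nested half-balls of radii $R_j=R(1-j/|\beta'|)$, $j=0,\ldots,|\beta'|$, and keep careful track of the dependence of the constant on the differentiation order to obtain a Cauchy-type bound
\[
\|D^{\beta'}_{x'}v\|_{L^\infty(\overline{B^\pm_{R/2}})}\leq (C/R)^{|\beta'|}\,|\beta'|!\,R^{-3/2}\|v\|_{L^2(B_R)},
\]
with $C=C(\alpha_0,\beta_0)$. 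This yields analyticity of the trace $v(\cdot,0^+)$ on $B'_{r_0/6}(P_k)$ with a quantitative radius of order $r_0$. The normal derivative $\partial_{x_3}v(\cdot,0^+)$ is handled analogously: strong convexity (\ref{strongconv}) renders the block $\CC_k(e_3\otimes e_3)$ of the leading symbol invertible, so the PDE expresses $\partial_{x_3}^2 v$ in terms of tangential derivatives and lower-order normal ones, propagating the analytic bound to all mixed derivatives and hence to the full Cauchy data.

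Once the Cauchy data are known to be real-analytic on $B'_{r_0/6}(P_k)\cap\Sigma_k$ with the above uniform radius of analyticity, Cauchy--Kovalevskaya (the hyperplane $\Sigma_k$ being non-characteristic for the elliptic operator $\mbox{div}(\CC_k\nabhat\cdot)$) produces a unique real-analytic $\tilde v$ on a two-sided neighborhood of $\Sigma_k\cap B'_{r_0/6}(P_k)$ of thickness $r_0/(4C_1)$, where $C_1=C_1(\alpha_0,\beta_0,L)$ is fixed by the analytic radius just obtained. Cauchy uniqueness for the constant-coefficient Lam\'{e} system guarantees $\tilde v = v$ throughout $D_k\cap\Xi^{C_1}_{k+1}$, so $\tilde v$ extends $v|_{D_k}$ to all of $D_k\cup\Xi^{C_1}_{k+1}$. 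The bound (\ref{3.32}) then follows by combining (\ref{stimaLinfinito}), applied to $v$ on a ball contained in $\Xi^{C_1/2}_{k+1}$, with the standard $L^\infty$ control of the Cauchy--Kovalevskaya series in terms of its analytic data.
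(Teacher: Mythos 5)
Your proposal follows essentially the same route as the paper's proof: fix coordinates flattening $\Sigma_k$, observe that tangential derivatives remain solutions, establish factorial (Cauchy-type) bounds on $D^{\beta'}_{x'}v$ near $\Sigma_k$ to conclude analyticity of the Cauchy data, and then apply Cauchy--Kovalevskaya together with uniqueness to obtain the analytic extension and the bound \eqref{3.32}. The only minor deviation is that the paper obtains the factorial bounds by iterating the Caccioppoli inequality (staying in $L^2$) and invoking Proposition~\ref{regularity} just once at the end, rather than iterating \eqref{regest} directly as you suggest; the latter requires passing back and forth between $L^2$ and $L^\infty$ and is cleaner done the paper's way, but the circle of ideas is the same.
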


\textit{Proof of Proposition \ref{Analytic}}\\
 It is not restrictive to assume that $P_k=0$, $\Sigma_k$ belongs to the plane $ \{x_3=0\}$ and $B^+_{r_0/3}\subset D_k$, $B^-_{r_0/3}\subset D_{k+1}$.
 By Proposition \ref{regularity} we know that $v_{|D_k}\in C^\infty (D_k\cup\Sigma_k)$ and $v_{|D_{k+1}}\in C^\infty (D_{k+1}\cup\Sigma_k)$.
   Since $\CC$ is a constant tensor in each domain, for every
 $\beta^\prime\in \left(\mathbb{N}\cup\{0\}\right)^{2}$, $D^{\beta^\prime}_{x^\prime}v$ is a solution to \eqref{equazA}.
 \\
 Denote by $R=\frac{r_0}{6}$ and $v_{+}:=v_{|\overline{B}^{+}_{R}}$.
 For any $x_0$ in $\Sigma _{k}\cap B_{R}$ ,
 $D^{\beta^\prime}_{x^\prime}v\in C^{0}\left(B_R(x_0)\right)$   and $v_{|\overline{B}^{+}_R(x_0)}\in C^\infty(\overline{B}^{+}_R(x_0))$.

 Let us recall the following Caccioppoli inequality (\cite[pag.20]{BBFM}): let $u$ be a solution to  \eqref{equazA} then, for every $x_0\in\Sigma_k\cap B_R$
 \begin{equation}
  \label{Caccio}
\int_{B_{\rho_{2}}(x_0)}\left|\nabla u\right|^2\leq
\frac{C}{(\rho_{2}-\rho_{1})^2}\int_{B_{\rho_{1}}(x_0)}\left| u\right|^2
\end{equation}
for $0<\rho_{2}<\rho_{1}<R$ where $C$ depends on $\alpha_0$ and $\beta_0$ only.

Denote by
\begin{equation}\label{Notation}
\phi(x^\prime)=v_{+}(x^\prime,0) \quad\mbox{and  }  \psi(x^\prime)=\frac{\partial v_{+}}{\partial_{x_3}}(x^\prime,0).
\end{equation}
We now prove that $\phi$ and $\psi$ are analytic in $\Sigma _{k}\cap B_{R}$ and we estimate the derivatives $D^{\beta^\prime}_{x^\prime}\phi$ and $D^{\beta^\prime}_{x^\prime}\psi$ from above on $B_{R/4}(x_0)$ for every $\beta^\prime\in \left(\mathbb{N}\cup\{0\}\right)^{2}$ and for every $x_0\in\Sigma_k\cap B_R$.

Starting from \eqref{Caccio} and using the same iterative procedure followed to prove inequality (39) in
\cite{BF} we obtain
\[
    \sum_{|\beta|=N_0}\int_{B_{\frac{R}{2}}(x_0)}|D^{\beta^\prime}_{x^\prime} v|^2\leq \left(C\left(\frac{2N_0}{R}\right)^2\right)^{N_0}
    \int_{B_R(x_0)}|v|^2,
    \]
for every $N_0\in\mathbb{N}$, where $C$ depends on $\alpha_0$ and $\beta_0$ only.

On the other side, by Proposition \ref{regularity} we have
\[
\|D^{\beta^\prime}_{x^\prime} v\|_{L^\infty(B_{\frac{R}{4}}(x_0))}\leq
C\|D^{\beta^\prime}_{x^\prime}v\|_{L^2(B_{\frac{R}{2}}(x_0))}.
\]
Hence, proceeding as in \cite[(41)]{BF} and applying Proposition \ref{regularity} to $D^{\beta^\prime}_{x^\prime}v$, we get
\begin{equation}\label{A3.20}
    \|D^{\beta^\prime}_{x^\prime} \phi\|_{L^\infty(B^{\prime}_{\frac{R}{4}}(x_0))}\leq \beta^\prime!\left(\frac{C}{R}\right)^{|\beta^\prime|}\|v\|_{L^2(B_R(x_0))},
\end{equation}
and
\begin{equation}\label{A3.21}
    \|D^{\beta^\prime}_{x^\prime}\psi\|_{L^\infty(B^\prime_{\frac{R}{4}}(x_0))}\leq \beta^\prime!\left(\frac{C}{R}\right)^{|\beta^\prime|+1}\|v\|_{L^2(B_R(x_0))}.
\end{equation}
where $C$ depends on $\alpha_0$ and $\beta_0$ only.
  By \eqref{A3.20} and \eqref{A3.21} and by the Cauchy-Kowalevski theorem we have that the solution $\tilde{v}$ to the Cauchy problem
\[
    \left\{\begin{array}{rcl}
             \mbox{div}\left(\CC\widehat{\nabla}\tilde{v}\right)&=&0\\
             \tilde{v}(\cdot,0) & =& \phi\quad\mbox{ on }  \Sigma _{k}\cap B_{R}, \\
             \frac{\partial \tilde{v}}{\partial x_3}(\cdot,0)&=&\psi \quad\mbox{ on }  \Sigma _{k}\cap B_{R}
           \end{array}\right.
\]
is analytic in the neighborhood $\Xi^{C_1}_{k+1}$ of  $\Sigma _{k}\cap B_{R}$. Therefore, taking into account \eqref{Notation}, $\tilde{v}$  is the analytic
extension of $v_+$ in $\Xi^{C_1}_{k+1}$ and estimate \eqref{3.32} follows.\qed

\bigskip

 Finally we state  a quantitative estimate of unique continuation.
\begin{prop}\label{QEUC}
Let $\varepsilon_1$, $E_1$ and $h$ be positive numbers, $h<h_0$, where $h_0$ is defined in \eqref{h_0}. Let $v\in H_{loc}^1(\mathcal{K})$
be a solution to
\[
\mbox{div}\left(\CC\widehat{\nabla}v\right)=0\quad\mbox{in}\quad \mathcal{K},
\]
such that
\[
    \|v\|_{L^\infty(K_0)}\leq \varepsilon_1,
\]
and
\begin{equation}
  \label{limitaz}
|v(x)|\leq E_1\left(\frac{\mbox{dist}(x,\Sigma_{M})}{r_0}\right)^{-\frac{1}{2}}
    \quad\mbox{for every}\quad x\in \mathcal{K}_{h/2}.
\end{equation}
Then
\[
    |v(\tilde{x})|\leq C\left( \frac{r_{0}}{r}\right) ^{2}\varepsilon_1 ^{\widetilde{\theta}^{\overline{m}M}\tau_{r}}
    (E_1+\ep_1)^{1-\widetilde{\theta}^{\overline{m}M}\tau _{r}},
\]
where $r\in\left(0,\frac{r_0}{C}\right)$, $\tilde{x}=P_{M}+rn_{M}$,
\[
\tau_{r}=\widetilde{\theta}\left( \frac{r}{r_0}\right) ^{\delta}
\]
and $\overline{m}$, $C$, $\delta$, $\widetilde{\theta}$, $0<\widetilde{\theta}<1$, depend on $A$, $L$, $\alpha_0$, $\beta_0$ and $N$.
\end{prop}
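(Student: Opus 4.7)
\textbf{Proof plan for Proposition \ref{QEUC}.} The strategy is to transport the smallness of $v$ on $K_0$ along the walkway $\widetilde{\mathcal{K}}_h$ from $D_0$ across the chain $D_1,\dots,D_{M-1}$ to a neighborhood of $P_M$, and then perform one final three-sphere step to reach $\tilde x=P_M+rn_M$. Two ingredients from the Appendix are used throughout: the three-sphere inequality for solutions of \emph{constant-coefficient} Lam\'e systems, and a stability estimate for the constant-coefficient Cauchy problem with data on a flat surface. The global ambient control $|v|\leq C(E_1+\varepsilon_1)$ on $\mathcal{K}_{h/2}\setminus\{x:\mathrm{dist}(x,\Sigma_M)<c r_0\}$ follows from (\ref{limitaz}).

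\emph{Step 1 (smallness inside a single piece $D_{j_k}$).} Inside each $D_{j_k}$ the tensor $\CC$ is constant, so any solution satisfies the three-sphere inequality. Fix a chain of overlapping balls contained in $(D_{j_k})_h$ that starts near $\Sigma_k$ and ends near $\Sigma_{k+1}$. By \cite[Prop. 5.5]{A-R-R-V} and assumption \textbf{(A1)} the number of such balls is bounded uniformly in terms of $A,L,N$. Iterating the three-sphere inequality along this chain propagates an $L^2$-smallness $\|v\|_{L^2(B)}\leq \eta$ from a ball $B$ near $\Sigma_k$ to a comparable ball $B'$ near $\Sigma_{k+1}$ with a loss of the form $\eta\mapsto C\eta^{\tilde\theta^{\bar m}} (E_1+\varepsilon_1)^{1-\tilde\theta^{\bar m}}$ for some integer $\bar m$ depending on $A,L$.

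\emph{Step 2 (crossing a flat interface $\Sigma_{k+1}$).} To move smallness from $(D_{j_k})_h$ to $(D_{j_{k+1}})_h$ we use Proposition \ref{Analytic}: the restriction $v_{|D_{j_k}}$ admits an analytic extension $\tilde v$ into the straddling neighborhood $\Xi^{C_1}_{k+1}$ with $\|\tilde v\|_{L^\infty(\Xi^{C_1}_{k+1})}\leq C\|v\|_{L^2(\Xi^{C_1/2}_{k+1})}$. Since $\tilde v=v$ on the $D_{j_k}$-side, the trace of $v$ on $\Sigma_{k+1}$ and its tangential derivatives coincide with those of $\tilde v$, while the conormal derivative of $v$ from the $D_{j_{k+1}}$-side is determined by the transmission condition $(\CC_{j_k}\widehat\nabla v)n=(\CC_{j_{k+1}}\widehat\nabla v)n$. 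Thus the Cauchy data of $v_{|D_{j_{k+1}}}$ on a portion of $\Sigma_{k+1}$ are controlled by $\|\tilde v\|_{C^1}$ on a slightly smaller set, and by Proposition \ref{regularity} these in turn are controlled by $\|v\|_{L^2}$ of the analytic extension. The Cauchy stability estimate for the constant-coefficient Lam\'e system in the half-ball (Appendix) then gives smallness of $v$ on a ball inside $(D_{j_{k+1}})_h$ near $\Sigma_{k+1}$, with another Hölder loss of the same exponential form $\tilde\theta^{\bar m}$.

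\emph{Step 3 (iteration and final step).} Iterating Steps 1 and 2 through the $M$ pieces of the chain, starting from $\|v\|_{L^\infty(K_0)}\leq\varepsilon_1$ and using the ambient bound $E_1+\varepsilon_1$ as the large side of each three-sphere/Cauchy step, we arrive at a bound of the form
\[
\|v\|_{L^2(B_{r_0/C}(\bar P))}\leq C\,\varepsilon_1^{\tilde\theta^{\bar m M}}(E_1+\varepsilon_1)^{1-\tilde\theta^{\bar m M}},
\]
for a point $\bar P\in(D_{j_{M-1}})_h$ at distance $\sim r_0$ from $\Sigma_M$. To reach $\tilde x=P_M+rn_M$, which lies in $D_{j_{M-1}}$ at distance $r$ from $\Sigma_M$, apply one last three-sphere inequality in $D_{j_{M-1}}$ with concentric balls of radii comparable to $r$, $r_0/C$ and $\mathrm{dist}(\tilde x,\Sigma_M\cup\partial D_{j_{M-1}})$; the resulting exponent depends on $r/r_0$ and takes the form $\tau_r=\tilde\theta(r/r_0)^\delta$, which is the standard three-sphere exponent when the inner-to-outer radius ratio is of order $r/r_0$. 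Combining with the $L^\infty$ bound (\ref{limitaz}) on the outer ball, and passing from $L^2$ to $L^\infty$ at scale $r$ via (\ref{stimaLinfinito}) (which produces the factor $(r_0/r)^2$), yields the claimed inequality.

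\emph{Main obstacle.} The delicate point is Step 2. Proposition \ref{Analytic} only extends $v$ \emph{as a solution to the constant-tensor equation} $\CC_{j_k}$, not to the physical piecewise-constant equation, so $\tilde v\neq v$ on the $D_{j_{k+1}}$-side. The bridge must be built through the transmission conditions on $\Sigma_{k+1}$ and a quantitative Cauchy-problem stability estimate, with care to track regularity (Proposition \ref{regularity}) so that tangential derivatives and conormal derivatives can be bounded by $L^2$ norms of $v$ on slightly enlarged sets. Keeping the exponents uniformly bounded below across both the three-sphere and Cauchy steps is what yields the overall $\tilde\theta^{\bar m M}\tau_r$ exponent.
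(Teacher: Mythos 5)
Your Steps 1 and 2 track the paper's proof closely: propagate smallness across each constant-coefficient piece by iterating the three-sphere inequality along a bounded chain of balls, then cross a flat interface $\Sigma_{k+1}$ by (a) analytically extending $v_{|D_{j_k}}$ across the interface via Proposition \ref{Analytic}, (b) controlling trace and flux on $\Sigma_{k+1}$ through Caccioppoli and the transmission conditions, and (c) applying the constant-coefficient Cauchy stability estimate (Theorem \ref{theo:Cauchy-generale}) to propagate into $D_{j_{k+1}}$. This is exactly the paper's inductive scheme and your bookkeeping of the exponents $\widetilde\theta^{\overline m M}$ is correct.

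The gap is in Step 3. You claim that one last three-sphere inequality with concentric balls of radii $\sim r$, $\sim r_0/C$, $\sim r_0$ produces an exponent of the form $\tau_r = \widetilde\theta\,(r/r_0)^\delta$, calling this "the standard three-sphere exponent when the inner-to-outer radius ratio is of order $r/r_0$." That is not true: Theorem \ref{lem:3sphere} only says that the exponent $\theta_0$ tends to $0$ as $r_1/r_3\to 0$, and the natural degradation for a single three-sphere step is logarithmic in $r_1/r_3$, not polynomial. A logarithmic exponent would be too weak to close the argument in Theorem \ref{Unifcont}, where $r=r_\varepsilon$ is chosen of size $r_0|\log\omega_k(\varepsilon)|^{-1/(4\delta)}$ precisely so that a polynomial $\tau_r$ absorbs the logarithm. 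To obtain the polynomial rate one must iterate three-sphere along a geometric chain of balls of radii $\chi^{k-1}t$ shrinking toward $P_M$, which is exactly the content of Proposition \ref{cono}: after $k_0 \sim \log(t_0/r)/|\log\chi|$ steps with a fixed per-step exponent $\overline\theta$, the accumulated exponent is $\overline\theta^{\,k_0-1}\gtrsim (r/t)^{|\log\overline\theta|/|\log\chi|}$, which gives $\tau_r\sim(r/r_0)^\delta$. Your proposal needs to replace the single three-sphere step in Step 3 by this cone-propagation lemma.
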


\bigskip

The proof of the above Proposition is given in the Appendix.

\section{Proof of the main result}\label{sec5}
This section contains the proof of the main result that consists in showing that the forward map introduced in definition \ref{forwardmap} satisfies all the assumptions of Proposition \ref{propBV}.

\subsection{Differentiability of $F$}\label{subsec5.1}
 \begin{prop}\label{deriv}
The map
\[
F:\mathcal{A}\rightarrow \mathcal{L}( H^{1/2}_{co}(\Sigma), H^{-1/2}_{co}(\Sigma))
\]
defined in (\ref{forwardmap}) is Frech\'et differentiable in $\mathcal{A}$ and
\begin{equation}\label{differenziale}
<F^\prime(\underline{L})[\underline{H}]\psi,\phi>=\int_{\om}
\HH \nabhat u_{\CC}:\nabhat v_{\CC}
\end{equation}
where $\HH=\CC_{\underline{H}}$.

Moreover, $F^\prime:\mathcal{A}\rightarrow \mathcal{L}\left(\RR^{2N},\mathcal{L}( H^{1/2}_{co}(\Sigma), H^{-1/2}_{co}(\Sigma))\right)$ is Lipschitz continuous with Lipschitz constant $C_{F^\prime}$ depending on $A,L, \alpha_0,\beta_0$ only.
\end{prop}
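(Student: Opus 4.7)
\medskip

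\noindent\textbf{Proof plan.} The natural approach is to use Alessandrini's identity~\eqref{AlessId} to get an exact formula for the increment $F(\underline{L}+\underline{H})-F(\underline{L})$ as a bilinear form, then isolate the candidate derivative and control the remainder by standard $H^1$ energy estimates for the Lamé system.

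\medskip

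\noindent\textbf{Step 1: derive an exact increment formula.} Fix $\underline{L}\in\mathcal{A}$ and $\underline{H}\in\RR^{2N}$ small enough that $\underline{L}+\underline{H}\in\mathcal{A}$ (so strong convexity is preserved). Put $\CC=\CC_{\underline{L}}$, $\HH=\CC_{\underline{H}}$. Given $\psi,\phi\in H^{1/2}_{co}(\Sigma)$, let $u_\CC$ and $v_\CC$ (resp.\ $u_{\CC+\HH}$, $v_{\CC+\HH}$) solve~\eqref{1} with data $\psi$ and $\phi$ for the tensor $\CC$ (resp.\ $\CC+\HH$). Applying~\eqref{AlessId} with $\CC^1=\CC$ and $\CC^2=\CC+\HH$, one obtains
\[
\langle (F(\underline{L}+\underline{H})-F(\underline{L}))\phi,\psi\rangle
=\int_{\om}\HH\,\nabhat u_\CC:\nabhat v_{\CC+\HH}.
\]
Adding and subtracting $\int_\om\HH\,\nabhat u_\CC:\nabhat v_\CC$ produces the candidate derivative~\eqref{differenziale} together with a remainder $R(\underline{H})$ involving $w:=v_{\CC+\HH}-v_\CC$.

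\medskip

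\noindent\textbf{Step 2: estimate the remainder.} The correction $w\in H^1_0(\om)$ solves
\[
\mathrm{div}\bigl((\CC+\HH)\nabhat w\bigr)=-\mathrm{div}(\HH\,\nabhat v_\CC)\quad\text{in }\om,\qquad w=0\text{ on }\der\om,
\]
so the Korn--Poincar\'e inequality together with strong convexity of $\CC+\HH$ yields $\|w\|_{H^1(\om)}\le C\|\underline{H}\|_\infty\|v_\CC\|_{H^1(\om)}\le C\|\underline{H}\|_\infty\|\phi\|_{H^{1/2}(\der\om)}$, where $C=C(A,L,\alpha_0,\beta_0)$. Then Cauchy--Schwarz gives
\[
|R(\underline{H})|=\Bigl|\int_{\om}\HH\,\nabhat u_\CC:\nabhat w\Bigr|
\le C\|\underline{H}\|_\infty^{2}\|\psi\|_{H^{1/2}}\|\phi\|_{H^{1/2}},
\]
which identifies~\eqref{differenziale} as the Fr\'echet derivative $F'(\underline{L})$ and shows the remainder is $O(\|\underline{H}\|_\infty^{2})$ uniformly over the unit ball of $H^{1/2}_{co}(\Sigma)\times H^{1/2}_{co}(\Sigma)$.

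\medskip

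\noindent\textbf{Step 3: Lipschitz continuity of $F'$.} For $\underline{L}^{(1)},\underline{L}^{(2)}\in\mathcal{A}$, write $\CC_i=\CC_{\underline{L}^{(i)}}$ and let $u_i,v_i$ denote the corresponding solutions with data $\psi,\phi$. Formula~\eqref{differenziale} gives
\[
\langle(F'(\underline{L}^{(1)})-F'(\underline{L}^{(2)}))[\underline{H}]\psi,\phi\rangle
=\int_{\om}\HH\,\bigl(\nabhat u_1:\nabhat v_1-\nabhat u_2:\nabhat v_2\bigr).
\]
Splitting the integrand as $\nabhat(u_1-u_2):\nabhat v_1+\nabhat u_2:\nabhat(v_1-v_2)$ and applying the same energy argument as in Step 2 to $u_1-u_2$ and $v_1-v_2$ (which solve zero-Dirichlet problems with right-hand side $\mathrm{div}((\CC_2-\CC_1)\nabhat u_2)$ and analogous for $v$) one gets
\[
\|u_1-u_2\|_{H^1}+\|v_1-v_2\|_{H^1}\le C\|\CC_1-\CC_2\|_\infty(\|\psi\|_{H^{1/2}}+\|\phi\|_{H^{1/2}}),
\]
whence
\[
\|F'(\underline{L}^{(1)})[\underline{H}]-F'(\underline{L}^{(2)})[\underline{H}]\|_\star
\le C\|\underline{H}\|_\infty\|\underline{L}^{(1)}-\underline{L}^{(2)}\|_\infty,
\]
with $C=C(A,L,\alpha_0,\beta_0)$ independent of $N$ (as the bound only involves the $L^\infty$ norm of the piecewise constant tensors).

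\medskip

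\noindent\textbf{Main obstacle.} None of the steps involves a deep new idea; the only care needed is in making the energy estimates uniform with respect to the a priori data. The point to watch is that the Korn/coercivity constant used to invert $\mathrm{div}((\CC+\HH)\nabhat\cdot)$ stays bounded as $\underline{H}$ varies: this forces one to localize $\underline{L}$ to a neighborhood on which $\CC+\HH$ remains strongly convex (which is why $\mathcal{A}$ is open and $F'$ is only Lipschitz on bounded subsets of $\mathcal{A}$, a fact that is harmless for applying Proposition~\ref{propBV} to the compact $\mathbf{K}\subset\mathcal{A}$).
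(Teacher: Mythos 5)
Your proof is correct and follows essentially the same route as the paper: apply Alessandrini's identity to express the increment of the DtN bilinear form, subtract the candidate linear term, and bound the remainder by an $H^1$ energy estimate for the zero-Dirichlet corrector; then prove Lipschitz continuity of $F'$ via the same splitting of the integrand. The only (harmless) difference is a symmetric rearrangement: the paper perturbs $u$ (the solution with data $\psi$) and takes $v$ at the base tensor, writing the corrector problem for $u_{\underline{L}+\underline{H}}-u_{\underline{L}}$, while you perturb $v$; since the DtN map and the quadratic form are symmetric, the two bookkeepings are equivalent.
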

\begin{proof}
Fix $\underline{L}\in\mathcal{A}$ and let $\underline{H}\in \RR^{2N}$ such that $\|\underline{H}\|_{\infty}$ is sufficiently small.
By (\ref{AlessId}) we have
\[
<\left(F(\underline{L}+\underline{H})-F(\underline{L})\right)\psi,\phi>=
\int_{\om}\HH\nabhat u_{\underline{L}+\underline{H}}:\nabhat v_{\underline{L}}.
\]
Hence, by setting,
\[
\eta:=<\left(F(\underline{L}+\underline{H})-F(\underline{L})\right)\psi,\phi>\!-\!\int_{\om}\!\HH\nabhat u_{\underline{L}}:\nabhat v_{\underline{L}}=\int_{\om}\!\HH\nabhat
(u_{\underline{L}+\underline{H}}-u_{\underline{L}}):\nabhat v_{\underline{L}},
\]
we have
\begin{equation}\label{3.3}
|\eta|\leq C r_0^2\|\underline{H}\|_{\infty}\|\nabla (u_{\underline{L}+\underline{H}}-u_{\underline{L}})\|_{L^{2}(\om)}\|\phi\|_{H^{1/2}_{co}(\Sigma)},
\end{equation}
where $C$ depends on $A,L, \alpha_0,\beta_0$ only. Let us estimate $\|\nabla (u_{\underline{L}+\underline{H}}-u_{\underline{L}})\|_{L^{2}(\om)}$. For, observe that
$w:=u_{\underline{L}+\underline{H}}-u_{\underline{L}}$ is solution to
\begin{equation}\label{3.4}
    \left\{\begin{array}{rcl}
             \mbox{div}(\CC_{\underline{L}}  \nabhat w)& = & \mbox{div}(\HH\nabhat u_{\underline{L}+\underline{H}})\mbox{ in }\om, \\
             w & = & 0\mbox{ on }\der\om,
           \end{array}\right.
\end{equation}
hence
\[
\int_{\om}\CC_{\underline{L}}\nabhat w:\nabhat w=\int_{\om}\HH\nabhat u_{\underline{L}+\underline{H}}:\nabhat w.
\]
By Lax-Milgram Theorem and Korn inequality we get
\begin{equation}\label{3.5}
\|\nabla w\|_{L^2(\om)}\leq C \|\underline{H}\|_{\infty}\|\nabla u_{\underline{L}+\underline{H}}\|_{L^{2}(\om)}\leq C r^{-1}_0
\|\underline{H}\|_{\infty}\|\psi\|_{H^{1/2}_{co}(\Sigma)},
\end{equation}
where $C$ depends on $A,L, \alpha_0,\beta_0$ only.

By inserting (\ref{3.5}) into (\ref{3.3}) we get
\begin{equation}\label{3.6}
|\eta|\leq C r_0 \|\underline{H}\|^2_{\infty}\|\psi\|_{H^{1/2}_{co}(\Sigma)}\|\phi\|_{H^{1/2}_{co}(\Sigma)},
\end{equation}
where $C$ on depends on $A,L, \alpha_0,\beta_0$ only, that yields (\ref{differenziale}).

Let us now prove the Lipschitz continuity of $F^\prime$. Let $\underline{L}^1$, $\underline{L}^2\in\mathcal{A}$ and set
\begin{eqnarray*}
\omega&:=&\!\!\!\!<\left(F^\prime(\underline{L}^2)-F^\prime(\underline{L}^1)\right)[\underline{H}]\psi,\phi>=\int_{\om}\HH \nabhat u_{\underline{L}^2}:\nabhat v_{\underline{L}^2}-\int_{\om}\HH
\nabhat u_{\underline{L}^1}:\nabhat v_{\underline{L}^1}\nonumber\\
&=&\!\!\!\!\int_{\om}\HH(\nabhat u_{\underline{L}^2}-\nabhat u_{\underline{L}^1}):\nabhat v_{\underline{L}^2}+\int_{\om}\HH\nabhat u_{\underline{L}^1}:(\nabhat
v_{\underline{L}^2}-\nabhat v_{\underline{L}^1}).
\end{eqnarray*}
By reasoning as we did to derive (\ref{3.6}) we obtain
\[
|\omega|\leq C_{F^\prime}r_0\|\underline{H}\|_{\infty}\|\underline{L}^2-\underline{L}^1\|_{\infty}\|\psi\|_{H^{1/2}_{co}(\Sigma)}\|\phi\|_{H^{1/2}_{co}(\Sigma)},
\]
where $C_{F^\prime}$ depends only on $A$, $L$, $\alpha_0$, and $\beta_0$.
\end{proof}

\subsection{Injectivity of $F_{|\mathbf{K}}$ and uniform continuity of  $(F_{|\mathbf{K}})^{-1}$}\label{subsec5.3}
In the present subsection we will prove Theorem \ref{Unifcont} whose statement is given below.

Let\begin{equation}\label{modcont}
\sigma(t)=\left\{\begin{array}{ccc}
                   |\log t|^{-\frac{1}{8\delta}} &\mbox{for}& 0<t<\frac{1}{e}, \\
                   t-\frac{1}{e}+1& \mbox{for}&t\geq\frac{1}{e},
                 \end{array}
\right.
\end{equation}
where $\delta\in (0,1)$ is as  in Proposition \ref{QEUC}. The function $\sigma$ is strictly increasing, concave, and $\lim_{t\rightarrow 0}\sigma (t)=0$.   We have
\begin{theo}\label{Unifcont}
For every $\underline{L}^1, \underline{L}^2\in {\bold K}$ the following inequality holds true
\begin{equation}\label{stimarozza}
\|\underline{L}^1-\underline{L}^2\|_{\infty}\leq C_{\ast}\sigma^N(\|F(\underline{L}^1)-F(\underline{L}^2)\|_*)
\end{equation}
where $\sigma^N(\cdot)$  is the composition of the function $\sigma(\cdot)$ defined in (\ref{modcont}) with itself $N$ times and $C_{\ast}$ is a constant depending on $A$, $L$, $\alpha_0$, $\beta_0$, $N$ only.
\end{theo}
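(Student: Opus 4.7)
The plan is to prove (\ref{stimarozza}) by induction on the position $k$ in a chain of domains $D_{j_1},\ldots,D_{j_M}$ connecting $D_1$ to a generic $D_j$: I aim to show that at step $k$ the Lamé parameters in $D_{j_1},\ldots,D_{j_k}$ are controlled by $\sigma^k(\|F(\underline{L}^1)-F(\underline{L}^2)\|_{\star})$. Since every chain has length at most $N$, taking $k=M$ for each $j$ and the maximum over $j$ yields (\ref{stimarozza}). The ingredients are Alessandrini's identity (\ref{AlessId}), the singular Green's functions $G^a(\cdot,y)$ of Proposition \ref{Green} associated with $\CC^a$, $a=1,2$, the identities of Propositions \ref{trick} and \ref{trick2}, the analytic extension across flat interfaces provided by Proposition \ref{Analytic}, and the quantitative unique continuation estimate of Proposition \ref{QEUC}.

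\textbf{Base step.} For $y,z\in D_1$ close to $P_1\in\Sigma$, I insert $u_1=G^1(\cdot,y)\,l$ and $u_2=G^2(\cdot,z)\,m$ into (\ref{AlessId}); after cutoff, the traces on $\partial\Omega$ are supported in $\Sigma$ so the local DN map applies. Replacing each $G^a$ in a neighbourhood of the singularities by the explicit Rongved biphase fundamental solution $\Gamma_{\CC^a_b}$ associated with the flat interface $\Sigma$ (the discrepancy being $H^1$-bounded thanks to (\ref{energy})), Proposition \ref{trick} rewrites the bulk integral over the negative half-space (in local coordinates) as $(\Gamma_{\CC^1_b}(y,z)-\Gamma_{\CC^2_b}(y,z))m\cdot l$, up to controlled remainders. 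The explicit formula (\ref{rongelisa}) together with (\ref{alpha})--(\ref{gamma}) then shows that, for suitable choices of $l,m$ (parallel vs.\ perpendicular to $n_1$) and of the direction $z\to y\to P_1$, this quantity isolates the jumps $\lambda_1^1-\lambda_1^2$ and $\mu_1^1-\mu_1^2$. The right-hand side of (\ref{AlessId}) is bounded by $\|F(\underline{L}^1)-F(\underline{L}^2)\|_{\star}$ times the $H^{1/2}_{co}(\Sigma)$ norms of the traces of $G^a$, which blow up as an inverse power of $\mathrm{dist}(y,P_1)$. Optimising in this distance produces the first $\sigma$-modulus.

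\textbf{Inductive step.} Assume $\|\CC^1-\CC^2\|_{L^\infty(\bigcup_{i\le k}D_{j_i})}\leq C_k\,\sigma^k(\eta)$, with $\eta:=\|F(\underline{L}^1)-F(\underline{L}^2)\|_{\star}$. Place the singularities $y,z$ in $D_{j_{k+1}}$ close to $P_{k+1}\in\Sigma_{k+1}$ and split Alessandrini's bulk integral as
\[
\int_{\Omega}(\CC^1-\CC^2)\nabhat G^1(\cdot,y):\nabhat G^2(\cdot,z)=I_{\le k}+I_{k+1}+I_{\mathrm{rest}},
\]
over $\bigcup_{i\le k}D_{j_i}$, over $D_{j_{k+1}}$, and over the remaining domains respectively. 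The first term is bounded by the inductive hypothesis times $L^2$-norms of $\nabhat G^a$ on the walkway $\widetilde{\mathcal{K}}_h$ of (\ref{Corridoio}); these are controlled by establishing pointwise smallness of $G^a$ in $K_0$ (via (\ref{palla in K_0}) and the energy bound (\ref{green1}) far from the singular point), then propagating the smallness through the chain by Proposition \ref{QEUC}, with Proposition \ref{Analytic} providing the analytic extension required to apply Proposition \ref{regularity} on both sides of each $\Sigma_i$. The second term, by Propositions \ref{trick}--\ref{trick2} applied with the biphase tensor adapted to $\Sigma_{k+1}$, yields the leading singular contribution encoding $\lambda_{j_{k+1}}^1-\lambda_{j_{k+1}}^2$ and $\mu_{j_{k+1}}^1-\mu_{j_{k+1}}^2$. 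The last term, as well as the right-hand side of (\ref{AlessId}), are handled by the same unique continuation and trace estimates. Optimising the distance of $y$ to $\Sigma_{k+1}$ then incurs one further logarithm, giving the $\sigma^{k+1}$ bound.

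\textbf{Main obstacle.} The delicate point is controlling the singular solutions $G^a$ throughout $\widetilde{\mathcal{K}}_h$ when the singularity $y$ is deep inside $\Omega$: the polynomial blow-up of $G^a$ near $y$ competes with the decay factor $\widetilde{\theta}^{\overline{m}M}\tau_r$ produced by the chain application of Proposition \ref{QEUC}. Balancing these after optimisation in the placement of the singularity is exactly what produces the modulus $|\log\eta|^{-1/(8\delta)}$ of (\ref{modcont}); getting the constant $1/(8\delta)$ right requires carefully tracking the exponents appearing in (\ref{regest}) and in Proposition \ref{QEUC}, together with the bounds on $\|G^a(\cdot,y)-\Gamma_{\CC_y^a}(\cdot,y)\|_{H^1(\Omega_0)}$ from Proposition \ref{Green}. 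A secondary technical issue is that Propositions \ref{trick}--\ref{trick2} must separate \emph{both} unknowns $\lambda_{j_{k+1}}$ and $\mu_{j_{k+1}}$: this is achieved by choosing $l,m$ parallel and orthogonal to $n_{k+1}$, which activate respectively the coefficients $\alpha$ of (\ref{alpha}) and $\gamma$ of (\ref{gamma}) in Rongved's representation, and hence give two independent scalar equations from which the two jumps can be extracted.
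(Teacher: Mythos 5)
Your overall skeleton — induction along a chain of domains, Alessandrini's identity with singular solutions of Proposition \ref{Green}, localisation to the Rongved biphase fundamental solution via Propositions \ref{trick}--\ref{trick2}, extraction of $\alpha$ and $\gamma$, and a quantitative unique-continuation step — is the right collection of ingredients. However, there is a genuine gap in how the singular solutions are deployed, and it affects both your base step and your inductive step.

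The central problem is where you put the singularities. You propose to place $y,z\in D_1$ (base step) and $y,z\in D_{j_{k+1}}$ (inductive step), i.e.\ \emph{inside} $\Omega$. But then $G^a(\cdot,y)$ has its pole inside $\Omega$, so it is not a solution of $\mathrm{div}(\CC^a\nabhat u)=0$ in $\Omega$, and Alessandrini's identity (\ref{AlessId}) simply does not apply to it. Cutting off near $y$ does not repair this: the cutoff turns $G^a(\cdot,y)$ into a non-solution (an extra source appears), and the identity acquires uncontrolled interior terms. This is precisely why the paper constructs the augmented domain $\Omega_0=\Omega\cup D_0$, extends both tensors identically on $D_0$, and places the poles $y,z$ only in $K_0\subset D_0\subset\Omega_0\setminus\overline\Omega$: there $G^a(\cdot,y)|_\Omega$ genuinely solves the equation in $\Omega$, its trace on $\partial\Omega$ is supported in $\Sigma$ because $G^a$ vanishes on $\partial\Omega_0$, and the trace norms stay bounded. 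Alessandrini's identity is valid \emph{only for these positions of the poles}, and this is what produces the initial smallness $|\mathcal{S}_k(y,z)|\lesssim r_0^{-1}(\varepsilon+\omega_k(\varepsilon))$ for $y,z\in K_0$.

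The second missing piece is the mechanism that moves the smallness from $K_0$ toward $P_{k+1}$. You write of ``establishing pointwise smallness of $G^a$ in $K_0$'' and propagating it by Proposition \ref{QEUC}; this cannot work, because $G^a(\cdot,y)$ is not small anywhere (in $K_0$ it is merely bounded by the $1/|x-y|$ bound and (\ref{green1})). What is small, for $y,z\in K_0$, is the \emph{integral} $\mathcal{S}_k(y,z)=\int_{\mathcal{U}_k}(\CC-\bar\CC)\nabhat G(\cdot,y):\nabhat\bar G(\cdot,z)$. The crucial observation (Proposition \ref{sol}) is that $\mathcal{S}_k^{(\cdot,q)}(\cdot,z)$, as a function of $y$, itself solves $\mathrm{div}(\CC\nabhat\,\cdot)=0$ in $\widetilde{\mathcal{K}}_k$, and similarly in the second variable; this is what licenses the application of Proposition \ref{QEUC} to $\mathcal{S}_k$ directly (giving Proposition \ref{QEUCSk}) and lets one slide $y,z$ from $K_0$ to $y_r=P_{k+1}+rn_{k+1}$, $z_{\bar r}=P_{k+1}+\bar rn_{k+1}$ while keeping a quantitative smallness estimate. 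Without identifying $\mathcal{S}_k$ and establishing that it is a weak solution, the propagation step has no object to act on. This is also where the logarithmic modulus $|\log\eta|^{-1/(8\delta)}$ comes from: the unique-continuation factor $\varsigma(\omega_k,r/r_0)$ in (\ref{zitaelisa}) decays too slowly to be beaten by powers of $r/r_0$, and balancing it against the $\sqrt{r/r_0}$ error from (\ref{energy}) forces the choice $r/r_0\sim|\log\omega_k(\varepsilon)|^{-1/(4\delta)}$, not an algebraic rate. If one could legitimately put the poles at $P_{k+1}$ directly, the naive estimate would in fact give a Lipschitz recursion $\omega_{k+1}\lesssim\omega_k$ — a sign that such a placement is impermissible.

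Finally, a smaller but real discrepancy: you propose to isolate the jumps of $\lambda_{j_{k+1}}$ and $\mu_{j_{k+1}}$ by choosing $l,m$ parallel and orthogonal to $n_{k+1}$. The paper instead fixes $l=m=e_3$ (normal direction) and varies the relative distance parameter $c\in[1/4,1/2]$ between the two poles, treating the resulting expression from (\ref{rongelisa}) as a polynomial $p(c)$ whose coefficient smallness separates $\alpha-\bar\alpha$ and $\alpha\gamma-\bar\alpha\bar\gamma$. Your variant would require explicit control of columns of the Rongved matrix other than the third, which the paper deliberately avoids by quoting only the third column in (\ref{rongelisa}); so your plan would need an additional explicit computation that is not available in the paper.
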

\begin{rem}
Observe that Theorem  \ref{Unifcont} provides  the injectivity of $F_{|\mathbf{K}}$ and an estimate of the modulus of continuity of $(F_{|\mathbf{K}})^{-1}$.
\end{rem}
In order to prove Theorem \ref{Unifcont} we need to prove first some preliminary results.
Let $j\in\{1,\ldots,N\}$ be such that
\[
\|\CC_{{\underline L}^1}-\CC_{{\underline L}^2}\|_{L^{\infty}(D_j)}=\|\CC_{{\underline L}^1}-\CC_{{\underline L}^2}\|_{L^{\infty}(\Omega_0)}
\]
and let $D_{j_1},\ldots,D_{j_M}$ be a chain of domains connecting $D_1$ to $D_j$. For the sake of brevity set $D_k=D_{j_k}$. Consider $\cal K$, $K_0$ and ${\cal K}_h$ defined as in Section 3. Let ${\cal W}_k=\textrm{Int}(\cup_{j=0}^k \overline{D}_j)$, ${\cal
U}_k=\Omega_0\backslash {\cal W}_k$, for $k=1,\dots, M-1$. The tensors $\CC_{\underline{L}^1}$ and $\CC_{\underline{L}^2}$ are extended as in (\ref{isotr1}) in all of
$\Omega_0$. To simplify the notation  we will set $\CC:=\CC_{{\underline L}^1}$
 and $\bar\CC:=\CC_{\underline{L}^2}$.
 Finally let $\tilde{{\cal K}}_k={\cal K}_h\cap {\cal W}_k$ and for $y,z\in \tilde{{\cal K}}_k$ define the matrix-valued function
\[
{\cal S}_k(y,z):=\int_{{\cal U}_k}(\CC-\bar\CC)(\cdot)\nabhat G(\cdot,y):\nabhat \bar G(\cdot,z) ,
\]
whose entries are given by
\[
{\cal S}^{(p,q)}_k(y,z):=\int_{{\cal U}_k}(\CC-\bar\CC)(\cdot)\nabhat G^{(p)}(\cdot,y):\nabhat \bar G^{(q)}(\cdot,z)\quad
p,q=1,2,3
\]
and where $G^{(p)}(\cdot,y)$ and $\bar G^{(q)}(\cdot,z) $ denote  respectively the $p$-th column and the $q$-th columnn of the singular solutions of Proposition
 \ref{Green} corresponding to the tensors $\CC$ and $\bar\CC$ respectively.
From (\ref{green1}) we have that
\[
|{\cal S}_k^{(p,q)}(y,z)|\leq  C(d(y)d(z))^{-1/2},  y,z\in \tilde{\cal K}_k,
\]
where  the constant $C$ depends on the a priori parameters only and $d(y)=d(y,{\cal U}_k)$, $d(z)=d(z,{\cal U}_k)$.
For any fixed $q=1,2,3$ let us denote by ${\cal S}^{(\cdot,q)}_k(\cdot,z)$ the vector valued function whose elements are ${\cal S}^{(p,q)}_k(y,z)$, $p=1,2,3$; analogously we define ${\cal S}^{(p,\cdot)}_k(y,\cdot)$, for any fixed $p=1,2,3$.

First we prove
\begin{prop}\label{sol}
For all $y,z\in \tilde{\cal K}_k$ we have that ${\cal S}_k^{(\cdot,q)}(\cdot, z)$ and ${\cal S}_k^{(p,\cdot)}(y, \cdot)$ belong to  $H^1_{loc}(\tilde{\cal K}_k)$ and for any
fixed $q\in\{1,2,3\}$
\begin{equation}\label{soly}
\mbox{div}(\CC  \nabhat {\cal S}_k^{(\cdot,q)}(\cdot,z))= 0\quad\textrm{in }\tilde{\cal K}_k,
\end{equation}
and for any fixed $p\in\{1,2,3\}$
\begin{equation}\label{solz}
\mbox{div}(\bar\CC  \nabhat{\cal S}_k^{(p,\cdot)}(\cdot, z))=0\quad\textrm{in }\tilde{\cal K}_k.
\end{equation}
\end{prop}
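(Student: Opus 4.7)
The plan rests on two ingredients: the uniform separation $\mathrm{dist}(y, \mathcal{U}_k) \geq h$ for every $y \in \tilde{\cal K}_k$, which holds because $\tilde{\cal K}_k \subset (D_i)_h$ for some $i \leq k$ while $\mathcal{U}_k$ is disjoint from $D_i$; and the Betti-type symmetry $G(x, y) = G(y, x)^T$ from \eqref{symgreen}, together with its analogue for $\bar G$.

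For the $H^1_{loc}$ regularity, I would first observe that estimate \eqref{green1} gives $\nabhat G(\cdot, y), \nabhat \bar G(\cdot, z) \in L^2(\mathcal{U}_k)$, so ${\cal S}_k^{(p, q)}(y, z)$ is well defined, with a bound locally uniform in $(y, z) \in \tilde{\cal K}_k \times \tilde{\cal K}_k$. On any compact $K \subset\subset \tilde{\cal K}_k$, interior regularity for the Lam\'e system (Proposition \ref{regularity}), applied to $y \mapsto G(y, x)$ in the region where $\CC$ is locally constant and $x \in \mathcal{U}_k$ is uniformly separated from $y$, furnishes uniform bounds on the $y$-derivatives of $G(y, x) = G(x, y)^T$. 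This legitimizes differentiation under the integral in $y$ and yields ${\cal S}_k^{(p, q)}(\cdot, z) \in C^\infty(K) \subset H^1(K)$; an analogous argument in $z$ handles the membership ${\cal S}_k^{(p, q)}(y, \cdot) \in H^1_{loc}(\tilde{\cal K}_k)$.

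To prove \eqref{soly}, the key observation is the identity $(G^{(p)}(x, y))_i = G_{ip}(x, y) = G_{pi}(y, x)$: for fixed $(x, i)$, the vector indexed by $p$ is the $i$-th column of $G(\cdot, x)$ evaluated at $y$, namely $G^{(i)}(y, x)$, which solves $\mathrm{div}_y(\CC(y) \nabhat_y\, \cdot) = 0$ in $\Omega_0 \setminus \{x\} \supset \tilde{\cal K}_k$. The same is true of its $x$-derivatives, because $\partial_{x_c}$ commutes with the $y$-operator. Expanding $(\CC - \bar\CC)(x) \nabhat_x G^{(p)}(x, y) : \nabhat_x \bar G^{(q)}(x, z)$ in Cartesian indices, the $y$-dependence enters only through terms of the form $\partial_{x_c} G_{pd}(y, x)$, which when collected over $p$ yield $\partial_{x_c} G^{(d)}(y, x)$. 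Hence, for each $x \in \mathcal{U}_k$, the vector-valued integrand (indexed by $p$) is a finite linear combination, with $y$-independent coefficients depending on $x$, $\bar G$, and $\CC - \bar\CC$, of solutions of $\mathrm{div}_y(\CC \nabhat_y\, \cdot) = 0$. Integrating in $x$ and commuting the differential operator with the integral—legitimate by the uniform bounds above and dominated convergence—then gives \eqref{soly} in the weak sense. The identity \eqref{solz} follows by the symmetric argument applied to $\bar G$ as a function of $z$, invoking the analogous symmetry $\bar G(x, z) = \bar G(z, x)^T$.

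The main technical obstacle is the justification of commuting $\mathrm{div}_y(\CC \nabhat_y\, \cdot)$ with the $x$-integral over the (possibly irregular) set $\mathcal{U}_k$; this reduces, via the Betti symmetry, to pointwise bounds on $\nabhat_y G(y, x)$ uniform in $x \in \mathcal{U}_k$ and $y$ in compacts of $\tilde{\cal K}_k$, which are standard interior estimates since locally $\CC$ is constant near $y$.
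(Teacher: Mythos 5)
Your proposal follows essentially the same route as the paper: both arguments hinge on Fubini (justified by the $L^2$ bound \eqref{green1}), the Betti symmetry $G(x,y)=G(y,x)^T$ from \eqref{symgreen}, and the observation that, read in its first argument, each column $G^{(i)}(\cdot,x)$ solves $\mathrm{div}(\CC\nabhat\,\cdot)=0$ in $\tilde{\cal K}_k$ for $x\in\mathcal{U}_k$. One small inaccuracy worth flagging: the claim ${\cal S}_k^{(p,q)}(\cdot,z)\in C^\infty(K)$ is, in general, too strong, since $\tilde{\cal K}_k$ contains the cylinders $Q_{(i)}$ that straddle the flat interfaces $\Sigma_i$, where $\CC$ is only piecewise constant; Proposition \ref{regularity} yields piecewise smoothness (plus continuity of tangential derivatives), not global $C^\infty$. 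This does not affect the $H^1_{loc}$ conclusion or the PDE. The paper sidesteps the smoothness question entirely by working at the weak level throughout: it computes the weak $y$-derivative of ${\cal S}_k$ directly via Fubini and a $y$-integration by parts (checking that $\nabla_y\partial_{x_j}w\in L^2(\mathcal{U}\times B_r(y_0))$ for the regular part $w=G-\Gamma$), and then verifies \eqref{soly} by inserting a vector-valued test function and reducing, after Fubini and symmetry, to the vanishing of $\int_{B_r(y_0)}\CC(y)\nabhat_y G^{(p)}(y,x):\nabhat_y\Phi(y)\,dy$; your "commute $\mathrm{div}_y(\CC\nabhat_y\,\cdot)$ with the $x$-integral" step, made rigorous, amounts to precisely this.
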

\begin{proof}
For seek of simplicity, in the proof we will omit the index $k$.
Let us fix $q\in \{1,2,3\}$ and let us first show that the vector valued function $S^{(\cdot,q)}(\cdot,z)\in H^1_{loc}(\tilde{\cal K})$  for fixed $z\in
\tilde{\cal K}$.  Let $\phi\in C^{\infty}_0(B_{r}(y_0))$ where $y_0\in \tilde{\cal K}$ and $B_{r}(y_0)\subset \tilde{\cal K}$. Consider, for fixed $p,
q\in\{1,2,3\}$
\begin{eqnarray*}
&&\int_{\tilde{\cal K}} {\cal S}^{(p,q)}(y,z)\partial_j \phi(y) dy=
\\&&\hskip 50pt=\int_{\tilde{\cal K}}\left[\int_{\cal U}(\CC-\bar\CC)(x)\nabhat_x G^{(p)}(x,y):\nabhat_x \bar
G^{(q)}(x,z) \partial_j\phi(y)dx\right]  dy.
\end{eqnarray*}
Observe now that by  (\ref{green1}) and by the fact that $B_{r}(y_0)\subset\tilde{\cal K}$ we have
\[
\int_{B_{r}(y_0)}\!\int_{\cal U}\!|\nabla_x G^{(p)}(x,y)|^2 dx dy<+\infty \textrm{,  }\quad \int_{B_{r}(y_0)}\!\int_{\cal U}\!|\nabla_x \bar G^{(q)}(x,z)|^2 dx dy<+\infty.
\]
Hence, by Schwarz inequality, we have that, for fixed $z\in\tilde{\cal K}$,
\[
 (\CC-\bar\CC)(x)\nabhat_x G^{(p)}(x,y):\nabhat_x \bar G^{(q)}(x,z) \partial_j\phi(y)\in L^1({\cal U}\times B_{r}(y_0)),
\]
so that we can interchange the order of integration and get
\begin{eqnarray*}
&&\int_{\tilde{\cal K}} {\cal S}^{(p,q)}(y,z)\partial_j \phi(y) dy=
\\&&\hskip 20pt=\int_{\cal U}\left[(\CC-\bar\CC)(x)\nabhat_x\int_{B_{r}(y_0)} G^{(p)}(x,y)\partial_j\phi(y)
dy:\nabhat_x \bar G^{(q)}(x,z) \right] dx
\end{eqnarray*}
and using the symmetry of $G$  almost everywhere in $\cal U$, (\ref{symgreen}), we get
\begin{eqnarray*}
\int_{B_{r}(y_0)}G^{(p)}(x,y)\partial_j\phi(y) dy&=&\int_{B_{r}(y_0)}G^{(p)}(y,x)\partial_j\phi(y) dy\\&=&-\int_{B_{r}(y_0)}\partial_{y_j} G^{(p)}(y,x)\phi(y) dy,
\end{eqnarray*}
so that
\begin{eqnarray*}
&&\int_{\tilde{\cal K}}{\cal S}^{(p,q)}(y,z)\partial_j \phi(y) dy=
\\&&\hskip 10pt=-\int_{\cal U}\left[(\CC-\bar\CC)(x)\nabhat_x\int_{B_{r}(y_0)} \partial_{y_j} G^{(p)}(y,x)\phi(y)
dy:\nabhat_x \bar G^{(q)}(x,z) \right] dx.
\end{eqnarray*}
Now recalling that  $G(y,x)=w(y,x)+\Gamma(y,x)$, by the properties of $\Gamma$  and by the boundary value problem satisfied by $\partial_{x_j} w(y,x)$ for any $j=1,2,3$  it is straightforward to see that $\nabla_y\partial_{x_j} w(y,x)\in L^2({\cal U}\times B_{r}(y_0))$ and hence
\[
\partial_{y_j}{\cal S}^{(p,q)}(y,z)=\int_{\cal U}(\CC-\bar\CC)(x) \nabhat_x\partial_{y_j} G^{(p)}(y,x) :\nabhat_x \bar G^{(q)}(x,z) dx.
\]

Now, arguing as in the first part of the proof and considering now a vector-valued test function $\Phi\in C^{\infty}_0(B_{r}(y_0))$ and by (\ref{symgreen}) we
have
\begin{eqnarray*}
&&\int_{B_{r}(y_0)}\!\!\!\CC(y)\nabhat_y{\cal S}^{(\cdot,q)}(y,z):\nabhat_y \Phi(y)dy=\\
&&=\int_{\cal U}(\CC-\bar\CC)(x)\nabhat_x\left[\int_{B_{r}(y_0)}\!\!\!\CC(y)\nabhat_y G(y,x)
:\nabhat_y\Phi(y) dy\right]:\nabhat_x \bar G^{(q)}(x,z)dx
\end{eqnarray*}
and since
\[
\int_{B_{r}(y_0)}\CC(y)\nabhat_yG^{(p)}(y,x):\nabhat_y \Phi(y)dy=0, \textrm{ a.e. in }\cal U
\]
for all $p=1,2,3$ we finally have
\[
\int_{B_{r}(y_0)}\CC(y)\nabhat_y{\cal S}^{(\cdot,q)}(y,z):\nabhat_y \Phi(y) dy=0,
\]
for all $\Phi\in C^{\infty}_0(B_{r}(y_0))$ and since $y_0$ is arbitrary \eqref{soly} follows. Analogously we get \eqref{solz}.
\end{proof}

\begin{prop}\label{QEUCSk}
If for a positive $\varepsilon_0$ and for some $k\in \{1,\cdots, M-1\}$
\begin{equation}\label{QEUCSk1}
|{\cal S}_k(y,z)|\leq \varepsilon_0r_0^{-1} \textrm{ for every }(y,z)\in  K_0\times K_0
\end{equation}
then
\begin{equation}\label{estimSk}
|{\cal S}_k(y_r,z_{\bar r})|\leq Cr_0^{-1}\left(\frac{r_0}{r}\right)^{5/2}\left(\frac{r_0}{\bar r}\right)^2\left(\frac{\varepsilon_0}{C_1+\varepsilon_0}\right)^{({\bar\theta}^{\bar nk})^2\tau_r\tau_{\bar r}}
\end{equation}
where $y_r=P_{k+1}+r n_{k+1}, z_{ \bar r}=P_{k+1}+\bar r n_{k+1}$, $P_{k+1}\in \Sigma_{k+1}$,  $\bar r, r\in (0,r_0/C)$, $\tau_r=\bar\theta\left(\frac{r}{r_0}\right)^{\delta}$, $\tau_{\bar r}=\bar\theta\left(\frac{\bar r}{r_0}\right)^{\delta}$ and $\bar n, C, C_1,\delta, \bar\theta\in (0,1)$ depend on $A,L, \alpha_0,\beta_0$ only.
\end{prop}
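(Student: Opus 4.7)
The plan is to combine Proposition \ref{sol} --- which says that for each $q\in\{1,2,3\}$ the column $\mathcal{S}_k^{(\cdot,q)}(\cdot,z)$ solves the Lam\'e system with tensor $\CC$ in $\widetilde{\mathcal K}_k$, and for each $p$ the row $\mathcal{S}_k^{(p,\cdot)}(y,\cdot)$ solves the Lam\'e system with tensor $\bar\CC$ --- with two successive applications of the quantitative unique continuation estimate of Proposition \ref{QEUC}: first in the variable $y$ with $z\in K_0$ held fixed, to propagate smallness from $K_0$ to the point $y_r$; then in the variable $z$ with $y=y_r$ held fixed, to propagate from $K_0$ to $z_{\bar r}$. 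Proposition \ref{QEUC} will be invoked for the truncated chain $D_0,D_1,\ldots,D_k$ with final interface $\Sigma_{k+1}$ (so $M$ is replaced by $k+1$); this is legitimate since the statement and proof of \ref{QEUC} depend only on the local geometry of the walkway.

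The pointwise input for Proposition \ref{QEUC} comes from (\ref{green1}) combined with Cauchy--Schwarz:
\[
|\mathcal{S}_k^{(p,q)}(y,z)|\leq Cr_0^{-2}\Bigl(\tfrac{\mathrm{dist}(y,\mathcal{U}_k)}{r_0}\Bigr)^{-1/2}\Bigl(\tfrac{\mathrm{dist}(z,\mathcal{U}_k)}{r_0}\Bigr)^{-1/2}.
\]
For $z\in K_0$ one has $\mathrm{dist}(z,\mathcal{U}_k)\gtrsim r_0$, so this reduces to $|\mathcal{S}_k(\cdot,z)|\leq Cr_0^{-1}(\mathrm{dist}(\cdot,\Sigma_{k+1})/r_0)^{-1/2}$, matching the hypothesis of Proposition \ref{QEUC} with $E_1=Cr_0^{-1}$ and $\varepsilon_1=\varepsilon_0 r_0^{-1}$ from (\ref{QEUCSk1}). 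The first application yields, uniformly in $z\in K_0$,
\[
|\mathcal{S}_k(y_r,z)|\leq C(r_0/r)^2 r_0^{-1}\,\varepsilon_0^{\widetilde\theta^{\bar m(k+1)}\tau_r}(C+\varepsilon_0)^{1-\widetilde\theta^{\bar m(k+1)}\tau_r}.
\]

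For the second step, fix $y=y_r$: the smallness on $K_0$ comes from the previous display, while the pointwise bound for $z\in\widetilde{\mathcal K}_k$ is now $|\mathcal{S}_k(y_r,z)|\leq Cr_0^{-1}(r_0/r)^{1/2}(\mathrm{dist}(z,\Sigma_{k+1})/r_0)^{-1/2}$, the factor $(r_0/r)^{1/2}$ arising because $\mathrm{dist}(y_r,\Sigma_{k+1})\sim r$. So the new $E_1$ is $Cr_0^{-1}(r_0/r)^{1/2}$. A second application of Proposition \ref{QEUC} adds a prefactor $(r_0/\bar r)^2$ and raises the smallness to the power $\widetilde\theta^{\bar m(k+1)}\tau_{\bar r}$; compounding the two exponents gives the product $(\widetilde\theta^{\bar m(k+1)})^2\tau_r\tau_{\bar r}$, and the degraded $E_1$ contributes the extra $(r_0/r)^{1/2}$ in the prefactor. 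Rewriting in the form $(\varepsilon_0/(C_1+\varepsilon_0))^{\cdots}$ and renaming $\widetilde\theta^{\bar m}$ as $\bar\theta^{\bar n}$ produces the claimed estimate \eqref{estimSk}.

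The main obstacle is the bookkeeping that produces the asymmetric prefactor $(r_0/r)^{5/2}(r_0/\bar r)^2$: the $(r_0/r)^2$ comes from the QEUC constant in step one, the $(r_0/\bar r)^2$ from the QEUC constant in step two, and the additional $(r_0/r)^{1/2}$ from the fact that the $E_1$ available in the second unique continuation step depends on the distance of the frozen point $y_r$ to the interface and therefore degrades as $r$ shrinks. A secondary technical point is to check that the two exponents genuinely compose multiplicatively into $\tau_r\tau_{\bar r}$; this is a standard feature of two-parameter unique continuation but should be verified by inserting the explicit output of step one into the input of step two and applying inequalities of the type $(a+b)^{1-\theta}\leq a^{1-\theta}+b^{1-\theta}$ to separate the contributions.
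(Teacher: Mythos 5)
Your proof follows the same route as the paper's: freeze $z\in K_0$, apply Proposition~\ref{QEUC} in $y$ to the columns of $\mathcal{S}_k$ (bounded by $C_1 r_0^{-1}$ on $\widetilde{\mathcal K}_k$ and by $\varepsilon_0 r_0^{-1}$ on $K_0$) to reach $y_r$; then freeze $y=y_r$, apply Proposition~\ref{QEUC} in $z$ to the rows of $\mathcal{S}_k$, with the degraded bound $E_1\sim r_0^{-1}(r_0/r)^{1/2}$ near $\Sigma_{k+1}$ accounting for the extra $(r_0/r)^{1/2}$ in the prefactor and the two QEUC exponents composing into $\tau_r\tau_{\bar r}$. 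This is exactly the paper's argument, and your identification of the parameter bookkeeping (effective chain length $k+1$, renaming of $\widetilde\theta$, $\bar m$) is the correct reading of what the statement's constants represent.
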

\begin{proof}
Fix $z\in K_0$ and consider the function $v(y):={\cal S}_k^{(\cdot,q)}(y,z)$, for fixed $q$. By Proposition \ref{sol} we know that $v$ is solution to
\[
\mbox{div}(\CC  \nabhat v(\cdot))= 0\quad\textrm{ in }\tilde{\cal K}_k.
\]
Moreover, from Proposition \ref{Green}, we get
\[
|v(y)|\leq C_1 r_0^{-1} , \quad y\in {\cal\tilde K}_k,
\]
where $C_1$ depends on $A,L, \alpha_0,\beta_0$ only
and from (\ref{QEUCSk1}) we have
\[
|v(y)|\leq  r_0^{-1}\varepsilon_0,\quad y\in  K_0.
\]
Then, applying Proposition \ref{QEUC} for $\varepsilon_1=r_0^{-1}\varepsilon_0$ and $E_1=C_1 r_0^{-1}$,  we have
\[
|v(y_r)|=|{\cal S}^{(\cdot,q)}_k(y_r,z)|\leq Cr_0^{-1}\left(\frac{r_0}{r}\right)^{2}\left(\frac{\varepsilon_0}{C_1+\varepsilon_0}\right)^{{\tilde\theta}^{\bar nk}\tau_r}.
\]
Now let us consider, for fixed $p$,
\[
\bar v(z):={\cal S}^{(p,\cdot)}_k(y_r,z)
\]
which is solution to
\[
\mbox{div}(\CC  \nabhat \bar v(\cdot))= 0\quad\textrm{in }\tilde{\cal K}_k
\]
and which satisfies
\[
|\bar v(z)|\leq r_0^{-1}C\left(\frac{r_0}{r}\right)^{2}
\left(\frac{\varepsilon_0}{C_1+\varepsilon_0}\right)^{{\tilde\theta}^{\bar nk}\tau_r}, \quad z\in K_0.
\]
By Proposition \ref{Green} we have
\[
|\bar v(z)|\leq Cr_0^{-1}\left(\frac{r}{r_0}\right)^{-1/2}\left(\frac{d(z)}{r_0}\right)^{-1/2}, \quad z\in {\cal\tilde K}_k,
\]
where $C$ depends on $A,L, \alpha_0,\beta_0$ only.
Hence, applying again Proposition \ref{QEUC} to $\bar v(z)$, we get
\[
|\bar v(z_{\bar r})|\leq  Cr_0^{-1}\left(\frac{r_0}{r}\right)^{5/2}\left(\frac{r_0}{\bar r}\right)^{2}\left(\frac{\varepsilon_0}{C_1+\varepsilon_0}\right)^{({\tilde\theta}^{\bar nk})^2\tau_r\tau_{\bar r}}
\]
which proves (\ref{estimSk}).
\end{proof}
\textit{Proof of  Theorem \ref{Unifcont}}\\
Observe first that $\|F(\underline{L}^1)-F(\underline{ L}^2)\|_*=\|\Lambda_{\CC}-\Lambda_{\bar\CC}\|_*$.
Denote by
\[
\varepsilon:=\|F(\underline{L}^1)-F(\underline{L}^2)\|_*.
\]
Then from identity (\ref{AlessId}), we derive that for every $y,z\in  K_0$ and for
$|l|=|m|=1$.
\begin{equation}\label{small}
\left|\int_{\om_0}(\CC-\bar\CC)(x)\nabhat G(x,y)\,l:\nabhat\bar G(x,z)\,m\, dx\right| \leq Cr_0^{-1}\varepsilon,
\end{equation}
where $C$ depends on $\alpha_0,\beta_0,A,L$ only. \\
Let
\[
\delta_k:=\max_{0\leq j\leq k}\{|\lambda_j-\bar\lambda_j|,|\mu_j-\bar\mu_j|\},
\]
where $k\in\{0,1,\cdots,M\}$. Recalling that by construction
\[
\CC|_{D_0}=\bar\CC|_{D_0}
\]
we have that $\delta_0=0$.
In order to obtain (\ref{stimarozza}) we use a recursive argument. More precisely, we prove that for a suitable increasing sequence $\{\omega_k(\varepsilon)\}_{0\leq k \leq M}$ satisfying $\varepsilon\leq\omega_k(\varepsilon)$ for every $k=0,\dots, M$ we have

\[
\delta_{k}\leq \omega_{k}(\varepsilon)\Longrightarrow \delta_{k+1}\leq \omega_{k+1}(\varepsilon),\mbox{ for every }k=0,\dots, M-1.
\]
Without loss of generality we can choose $\omega_{0}(\varepsilon)=\varepsilon$.
Suppose now that for some $k\in\{1,\cdots,M-1\}$ we have
\begin{equation}\label{delta}
\delta_k\leq \omega_k(\varepsilon).
\end{equation}
Consider
\[
{\cal S}_k(y,z):=\int_{{\cal U}_k}(\CC-\bar\CC)(\cdot)\nabhat G(\cdot,y):\nabhat \bar G(\cdot,z)
\]
and fix $z\in  K_0$. From Proposition \ref{Green}  and from (\ref{small}) we get, for $y, z\in  K_0$
\[
|{\cal S}_k(y,z)|\leq
\frac{C}{r_0}(\varepsilon+\omega_k(\varepsilon)),
\]
where $C$ depends on $A,L, \alpha_0,\beta_0$ only.
By (\ref{estimSk}) and choosing $\bar r=cr$ with $c\in [1/4,1/2]$ we easily get
that there are constants $C_0$, $\delta\in(0,1)$ and $\theta_\ast$ depending only on $A,L, \alpha_0,\beta_0$ and, increasingly, on $M$, such that
for fixed $l,m\in\RR^3$ such that $|l|=|m|=1$,
\begin{equation}\label{small2}
|{\cal S}_k(y_r,z_{\bar r})\,m\cdot l|\leq C r_0^{-1}\left(\frac{r_0}{r}\right)^{9/2}\varsigma \left(\omega_k(\ep),\frac{r}{r_0}\right),
\end{equation}
where
\begin{equation}\label{zitaelisa}
\varsigma\left(t,s\right)=\left(\frac{t}{1+t}\right)^{\theta_\ast s^{2\delta}}
\end{equation}
Let us choose $l=m=e_3$ and split
\begin{equation}\label{small3}
{\cal S}_k(y_r,z_{\bar r})\,e_3\cdot e_3=I_1+I_2,
\end{equation}
where
\begin{equation}\label{I1}
I_1= \int_{B_{r_1}\cap D_{k+1}}\!\!\!\!\!\!(\CC-\bar\CC)(x)\nabhat G(x,y_r) \,e_3:\nabhat\bar G(x,z_{\bar r})\,e_3\,dx
\end{equation}
and
\[
I_2= \int_{{\cal U}_{k+1}\backslash (B_{r_1}\cap D_{k+1})}\!\!\!\!\!\!(\CC-\bar\CC)(x)\nabhat G(x,y_r) \,e_3:\nabhat\bar G(x,z_{\bar r})\,e_3\,dx
\]
and where $r_1=\frac{r_0}{4LC_L}$ for $C_L$ as in (\ref{pho_1}). Then, from Proposition \ref{Green}, we derive immediately that
\begin{equation}\label{small4}
|I_2|\leq \frac{C}{r_0}.
\end{equation}
By \eqref{delta}  we have that
\begin{equation}\label{ind1}
|\overline{\lambda}_k-\lambda_k|\leq \omega_{k}(\varepsilon), \quad|\overline{\mu}_k-\mu_k|\leq \omega_{k}(\varepsilon).
\end{equation}
and hence, using \eqref{strongconv},
\begin{equation}\label{ind2}
|\overline{\nu}_k-\nu_k|\leq C\omega_{k}(\varepsilon),
\end{equation}
where $C$ depends on $\alpha_0$ only.
Estimates (\ref{ind1}) and (\ref{ind2}) together with (\ref{energy}) and (\ref{green1}) give
\begin{equation}\label{est2}
|I_1|\geq \left|\int_{B_{r_1}\cap D_{k+1}}\!\!\!\!\!\!\!\!\!\!\!\!\!\!\!\!\!(\CC_b^{k+1}-\bar\CC_b^{k+1})(\cdot)\nabhat \Gamma_{k+1}(\cdot,y_r)\, e_3:\nabhat\bar \Gamma_{k+1}(\cdot,z_{\bar r})\,e_3
\right|-\frac{C}{\sqrt{r_0 r}}-\frac{C\omega_k(\varepsilon)}{r},
\end{equation}
where $\Gamma_{k+1}$ and $\bar\Gamma_{k+1}$ are the biphase fundamental solutions introduced in Section \ref{sec4}
where the elastic phase correspond to elastic tensors $\CC_b^{k+1}$ and $\bar\CC_b^{k+1}$ given by
\begin{eqnarray*}
\CC_b^{k+1}&=&\CC_k\chi_{\RR^3_+}+\CC_{k+1}\chi_{\RR^3_-}\\
{\bar\CC}_b^{k+1}&=&{\CC}_k\chi_{\RR^3_+}+{\bar\CC}_{k+1}\chi_{\RR^3_-}\nonumber
\end{eqnarray*}
up to a rigid transformation that maps $\Sigma_{k+1}$ into $x_3=0$. Furthermore by (\ref{small2}), (\ref{small3}) and (\ref{small4}) we obtain
\begin{equation}\label{I1elisa}
|I_1|\leq Cr_0^{-1}\left(\left(\frac{r_0}{r}\right)^{9/2}\varsigma \left(\omega_k(\ep),\frac{r}{r_0}\right)+1\right),
\end{equation}
where $C$ depends on $A,L, \alpha_0,\beta_0$ only. Hence, by (\ref{I1elisa}) and \eqref{est2} and by performing the change of variables $x=rx^\prime$ in the integral at the right hand side of (\ref{I1}), we have
\begin{equation}\label{small6}
\left|\int_{B^-_{\rho_0/r}}\!\!\!\!\!\!\!\!
(\CC_b^{k+1}\!-\bar\CC_b^{k+1})(x^\prime)\nabhat \Gamma_{k+1}(x^\prime,e_3)e_3:\nabhat\bar \Gamma_{k+1}(x^\prime,ce_3)e_3\, dx^\prime\right|\leq \delta_0\!\left(\frac{r}{r_0}\right),
\end{equation}
where
\[\delta_0\left(\frac{r}{r_0}\right)=C\left[\left(\frac{r_0}{r}\right)^{7/2}\varsigma \left(\omega_k(\ep),\frac{r}{r_0}\right)+\sqrt{\frac{r}{r_0}}\,\right].\]
Since we have
\[
\left|\int_{\RR^3_-\backslash B^-_{\rho_0/r}}\!\!\!\!\!\!\!\!\!
(\CC_b^{k+1}-\bar\CC_b^{k+1})(x^\prime)\nabhat \Gamma_{k+1}(x^\prime,e_3)e_3:\nabhat\bar \Gamma_{k+1}(x^\prime,ce_3)e_3\, dx^\prime\right|\leq  C\frac{r}{r_0},
\]
where $C$ depends on $A,L, \alpha_0,\beta_0$ only, by \eqref{small6} and by applying Proposition \ref{trick} we obtain
\begin{equation}\label{small8}
\left|\left(\Gamma_{k+1}(e_3, ce_3)-\bar \Gamma_{k+1}(e_3,ce_3)\right) e_3\cdot e_3\right|\leq \delta_0\left(\frac{r}{r_0}\right)+C\frac{r}{r_0}.
\end{equation}

For seek of simplicity in what follows we will omit the indices $k$ and $k+1$
and write $\mu=\mu_k$, $\mu^\prime=\mu_{k+1}$ and, in a similar way, define
  $\lambda$, $\lambda^\prime$, $\nu$, $\nu^\prime$. We will bar corresponding Lam\'e coefficients for $\bar\CC$.

Using the explicit form (\ref{rongelisa}) of the biphase fundamental solution we have
\begin{eqnarray*}
&&\Gamma_{k+1}(e_3,ce_3)e_3\cdot e_3-\bar\Gamma_{k+1}(e_3,ce_3)e_3\cdot e_3=\frac{1}{4\pi(1-c)}\left(\frac{1}{\mu}-\frac{1}{\bar\mu}\right)+\\
&&\mbox{ } +\frac{1}{16\pi(1+c)}\left[\frac{\alpha[(3-4\nu)^2-\gamma+3-4\nu]}{\mu(1-\nu)}
-\frac{\bar\alpha[(3-4\bar\nu)^2-\bar\gamma+3-4\bar\nu]}{\bar\mu(1-\bar\nu)}\right]+\\
&&\mbox{ } +\frac{c}{4\pi(c+1)^3}\left[\frac{\alpha}{\mu(1-\nu)}-
\frac{\bar\alpha}{\bar\mu(1-\bar\nu)}\right],
\end{eqnarray*}
where
\begin{eqnarray*}
\alpha=F_1(\mu,\mu^\prime,\nu),&&\bar\alpha=F_1(\bar\mu,\bar\mu^\prime,\bar\nu) ,\\
\gamma=F_2(\mu,\mu^\prime,\nu,\nu^\prime),&&
\bar\gamma=F_2(\bar\mu,\bar\mu^\prime,\bar\nu,\bar\nu^\prime)\end{eqnarray*}
and $F_1$ and $F_2$ have been defined in (\ref{alpha}) and (\ref{gamma}).

From \eqref{small8}, \eqref{ind1} and \eqref{ind2} we obtain, by elementary calculation, for every $c\in [1/4,1/2]$
\begin{equation}\label{small13}
\left|p(c)\right|\leq C\left(\delta_0\left(\frac{r}{r_0}\right)+\frac{r}{r_0}+\omega_k(\varepsilon)\right)
\end{equation}
where
\[p(c):=[4(\alpha-\bar\alpha)(3-4\nu)(1-\nu)-(\alpha\gamma-\bar\alpha\bar\gamma)](1+c)^2+4c(\alpha-\bar\alpha) \]
and $C$ depends on $\alpha_0$ only.
Now, if $\omega_k(\varepsilon)<1/e$ then we choose $r=r_{\varepsilon}$ where
\[
r_{\varepsilon}=\frac{r_0}{C}|\log\omega_k(\varepsilon)|^{-\frac{1}{4\delta}},
\]
 where $C$ depends on $\alpha_0,\beta_0,A,L$ and $\delta\in (0,1)$ is as in Proposition \ref{QEUCSk} and by \eqref{small13} we get
 \begin{equation}\label{small15}
\left|p(c)\right|\leq
C|\log\omega_k(\varepsilon)|^{-\frac{1}{8\delta}},
\end{equation}
for every $c\in [1/4,1/2]$, where $C$ depends on $\alpha_0,\beta_0,A,L$ and $\delta\in (0,1)$.

Otherwise, if $\omega_k(\varepsilon)\geq 1/e$, since $p(c)$ is bounded, we can trivially write
 \begin{equation}\label{small16}
 \left|p(c)\right|\leq
Ce\omega_k(\varepsilon).
\end{equation}
Estimates (\ref{small15}) and (\ref{small16}) yields
\begin{equation}\label{small15ter}
\left|p(c)\right|\leq C\sigma(\omega_k(\varepsilon)),
\end{equation}
where $C$ depends on $\alpha_0,\beta_0,A,L$ only.

From  (\ref{small15ter}) we easily get
\begin{equation}\label{small17}
|\alpha-\bar\alpha|\leq C\sigma(\omega_k(\varepsilon)) \quad\textrm{,   }    |\alpha\gamma-\bar\alpha\bar\gamma|\leq C\sigma(\omega_k(\varepsilon)).
\end{equation}
where $C$  depends on $A$, $L$, $\alpha_0$ and $\beta_0$.

Now, by simple but tedious calculations, from \eqref{small17} and \eqref{boundPoisson} we derive
\[
|\nu^\prime-\bar\nu^\prime|\leq C \sigma(\omega_k(\varepsilon))\textrm{,  } |\lambda^\prime-\bar\lambda^\prime|\leq C \sigma(\omega_k(\varepsilon))\textrm{,  } |\mu^\prime-\bar\mu^\prime|\leq C \sigma(\omega_k\varepsilon))
\]
Hence we have
\[
\delta_{k+1}\leq \omega_{k+1}(\varepsilon):=C \sigma(\omega_k(\varepsilon)).
\]
Finally, by iteration and recalling that  $\omega_0(\varepsilon)=\varepsilon$ we get
\eqref{stimarozza}.
\qed

\subsection{Injectivity of $F^\prime(\underline{L})$ and estimate from below of $F^\prime_{|\mathbf{K}}$} \label{rifinita}

\begin{prop}\label{Stimabasso1}
Let $F: \mathcal{A}\rightarrow \mathcal{L}( H^{1/2}_{co}(\Sigma), H^{-1/2}_{co}(\Sigma))$ be the map introduced in definition \ref{forwardmap}. Let us define
\[
q_{0}:=\min \left \{ \left \Vert F^\prime\left( \underline{L}\right) [
\underline{H}]\right \Vert_* |\underline{L}\in \mathbf{K},\underline{H}\in \mathbb{R}
^{2N},\left \Vert \underline{H}\right \Vert _{\infty }=1\right \}
\]
we have
\begin{equation}\label{stimas_0}
\left(\sigma^N\right)^{-1}\left(1/C_{\star}\right)\leq q_{0},
\end{equation}
where $\sigma(\cdot)$ is defined by \eqref{modcont} and $C_{\star}$, $C_{\star}>1$, depends on $A$, $L$, $\alpha_0$, $\beta_0$ and $N$ only.
\end{prop}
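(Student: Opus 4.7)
The plan is to proceed in close parallel with the proof of Theorem \ref{Unifcont}, replacing the tensor difference $\CC - \overline{\CC}$ by the fixed perturbation tensor $\HH := \CC_{\underline{H}}$ and using Proposition \ref{trick2} in place of Proposition \ref{trick}. Setting $\varepsilon := \|F^\prime(\underline{L})[\underline{H}]\|_\star$, I would show $\|\underline{H}\|_\infty \leq C_\star \sigma^N(\varepsilon)$ for a constant $C_\star>1$ depending only on the a priori data. Taking $\|\underline{H}\|_\infty = 1$ this gives $\sigma^N(q_0) \geq 1/C_\star$, i.e.\ \eqref{stimas_0}. The starting point is formula \eqref{differenziale}: plugging into it boundary data obtained from (a standard mollification of) the traces of the singular solutions $G^{\underline{L}}(\cdot,y)l$ of Proposition \ref{Green} for the tensor $\CC_{\underline{L}}$ yields
\[
\left| \int_{\Omega_0} \HH\, \nabhat G^{\underline{L}}(\cdot, y) l : \nabhat G^{\underline{L}}(\cdot, z) m \right| \leq C r_0^{-1} \varepsilon
\]
for every $y,z \in K_0$ and every unit vectors $l,m \in \RR^3$.

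Next, for a chain $D_{j_1},\dots,D_{j_M}$ realising $\max_j\{|h_j|,|k_j|\}$, define
\[
\mathcal{T}_k(y,z) := \int_{\mathcal{U}_k} \HH\, \nabhat G^{\underline{L}}(\cdot,y) : \nabhat G^{\underline{L}}(\cdot,z), \qquad k=1,\dots,M-1.
\]
The argument of Proposition \ref{sol} applies verbatim and shows that every column, resp.\ row, of $\mathcal{T}_k$ is a local solution of the Lam\'e system with coefficient $\CC_{\underline{L}}$ in $\tilde{\mathcal{K}}_k$. Applying Proposition \ref{QEUC} in each variable exactly as in Proposition \ref{QEUCSk}, smallness propagates from $K_0 \times K_0$ to the pair $(y_r, z_{\bar r}) = (P_{k+1}+rn_{k+1},\, P_{k+1}+\bar r n_{k+1})$. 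Splitting $\mathcal{T}_k(y_r, z_{\bar r}) e_3 \cdot e_3 = I_1 + I_2$ as in \eqref{small3}, bounding $I_2$ via \eqref{green1}, replacing $G^{\underline{L}}$ in $I_1$ by the biphase fundamental solution $\Gamma_{k+1}$ of $\CC_b^{k+1} = \CC_k \chi_{\RR^3_+} + \CC_{k+1} \chi_{\RR^3_-}$ (using \eqref{energy}--\eqref{green1}), and rescaling $x=rx'$, Proposition \ref{trick2} applied with
\[
\HH_{k+1} := \bigl(h_{k+1}\, I_3 \otimes I_3 + 2 k_{k+1}\,\II_{Sym}\bigr) \chi_{\RR^3_-}
\]
identifies the main part of $I_1$ with $\frac{d}{dt}\Gamma_{\CC_b^{k+1} + t \HH_{k+1}}(e_3, c e_3)\, e_3\cdot e_3 \big|_{t=0}$; the piece of $\HH$ on $\RR^3_+$ involves only $(h_k, k_k)$, which by the induction hypothesis is already controlled by $\omega_k(\varepsilon)$ and is absorbed into the remainders just as in the proof of Theorem \ref{Unifcont}.

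Differentiating Rongved's explicit representation \eqref{rongelisa} in $t$ at $t=0$ then produces a polynomial $\tilde p(c)$ of degree at most two in $c$ whose coefficients are explicit \emph{linear} functions of $(h_{k+1}, k_{k+1})$ with slopes depending only on $(\lambda_k,\mu_k,\lambda_{k+1},\mu_{k+1})$. Evaluating $\tilde p$ at three distinct values $c\in[1/4,1/2]$ and optimising $r$ exactly as in \eqref{small13}--\eqref{small17} yields the recursion
\[
\max\{|h_{k+1}|, |k_{k+1}|\} \leq C\, \sigma\bigl( \max\{|h_k|, |k_k|, \varepsilon\} \bigr) =: \omega_{k+1}(\varepsilon).
\]
Since the extension of $\CC_{\underline{L}}$ to $D_0$ is a fixed tensor independent of $\underline{L}$, we have $h_0 = k_0 = 0$, and iterating this estimate $N$ times produces the desired bound.

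\textbf{Main obstacle.} The delicate ingredient is the quantitative non-degeneracy of the linear map $(h_{k+1}, k_{k+1}) \mapsto \tilde p(\cdot)$, uniformly for $\underline{L}\in\mathbf{K}$: I have to check that the Jacobian of this map, computed by differentiating \eqref{alpha}--\eqref{gamma} with respect to the Lam\'e parameters, admits a lower bound depending only on $\alpha_0$ and $\beta_0$. This is the linearised analog of \eqref{small15ter}--\eqref{small17}, where the direct algebraic inversion is replaced by an inverse function theorem argument uniform on $\mathbf{K}$. Once this non-degeneracy is established, the remainder of the argument is a direct transposition of the proof of Theorem \ref{Unifcont}, and the final constant $C_\star$ again depends only on $A$, $L$, $\alpha_0$, $\beta_0$, $N$.
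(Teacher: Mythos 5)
Your plan reproduces the paper's proof of Proposition~\ref{Stimabasso1} step for step: the same choice of singular-solution test data in \eqref{differenziale}, the same quantity $\mathcal{T}_k$, the same propagation of smallness via Proposition~\ref{QEUC}, the same reduction to the biphase fundamental solution followed by Proposition~\ref{trick2}, and the same recursion on $\max\{|h_i|,|k_i|\}$. The ``main obstacle'' you flag, namely the uniform non-degeneracy of the linear map $(h_{k+1},k_{k+1})\mapsto\tilde p(\cdot)$, is not a loose end in the paper: after differentiating Rongved's formula one finds
\[
\alpha'(0)=-\frac{4(1-\nu_i)\mu_i}{(\mu_i+(3-4\nu_i)\mu_{i+1})^2}\,k_{0,i+1},
\]
so $|k_{0,i+1}|$ is recovered at once from $|\alpha'(0)|$ by the bounds \eqref{boundPoisson} and \eqref{strongconv}, and then $(\alpha\gamma)'(0)$ yields $|h_{0,i+1}|$ with a coefficient again bounded below by \eqref{boundPoisson}, \eqref{strongconv}; this is the content of \eqref{Vsyst1}--\eqref{k1} in the paper and requires no abstract inverse-function argument. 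One minor simplification you missed: the ``piece of $\HH$ on $\RR^3_+$'' never enters, because $\mathcal{T}_i$ is an integral over $\mathcal{U}_i=\Omega\setminus\mathcal{W}_i$, which excludes $D_i$; inside the small ball $B_{r_1}(P_{i+1})$ the only surviving coefficient is $(h_{0,i+1},k_{0,i+1})\chi_{D_{i+1}}$, so $\widetilde{\HH}$ supported in $\RR^3_-$ already accounts for everything and there is no leftover term to absorb by the induction hypothesis. With that observation and the explicit differentiation, your argument closes and matches the published proof.
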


\begin{proof}
By the definition of $q_0$ we have that there exist $\underline{L}_0 \in \mathbf{K}$ and $\underline{H}_0=(h_{0,1},\dots,h_{0,N}, k_{0,1},\ldots,k_{0,N})$, $\left \Vert \underline{H}_0\right \Vert _{\infty }=1$, such that

\[
\left\Vert F^\prime\left( \underline{L}_0\right) [\underline{H}_0] \right\Vert_*=q_0.
\]
Therefore, by \eqref{differenziale} and Proposition \ref{deriv} we have
\begin{equation}\label{1s_0}
\left\vert\int_{\om}
\HH_0 \nabhat G(\cdot,y)l:\nabhat G(\cdot,z)m\right\vert\leq \frac{C}{r_0}\,q_0 \quad\mbox{for every } y,z\in \mathcal{K}_0,
\end{equation}
for every $l,m$ unit vectors of $\mathbb{R}^3$, where $C$ depends on $\alpha_0,\beta_0$ $L$ and $A$ only, $\HH_0=\CC_{\underline{H}_0}$, $G(\cdot,y)$ denotes the singular solution defined in Section \ref{subsec4.3}.

\noindent From now on vector $(0,h_{0,1},\dots,h_{0,N},0, k_{0,1},\ldots,k_{0,N})$ will still be denoted by $\underline{H}_0$.

Let us fix $j\in\{1,\ldots,N\}$ and let $D_{j_1},\ldots,D_{j_M}$ be a chain of domains connecting $D_1$ to $D_j$, where
\[\max\{|h_{0,j}|,|k_{0,j}|\}=\left \Vert \underline{H}_0\right \Vert _{\infty }=1\]
 For the sake of brevity set $D_i=D_{j_i}$,
$i=1,...M$ and order domains and entries in $ \underline{H}_0$ accordingly. For every $i\in\{0,\ldots,M-1\}$ denote by $\Gamma_{i+1}(\cdot,\cdot)$ the  biphase fundamental  solution introduced in Section \ref{sec4} where the elastic phases correspond to the elastic tensor
given by
\[
\CC_b^{(i+1)}(x) = \CC_i\chi_{\RR^3_+}+\CC_{i+1}\chi _{\RR^3_-}(x).
\]
Here $\lambda_{0}=0$ and $\mu_0=1$.
Now, for any $i\in \{0,\dots, M-1\}$ let us denote by
\[
\eta_i:=\max \limits_{0\leq p\leq i}\left \{ \left \vert h_{0,p}\right \vert
,\left \vert k_{0,p}\right \vert \right \}.
\]
In order to obtain \eqref{stimas_0}, we use a recursive argument. More precisely, we prove that for a suitable increasing sequence $\{\omega_i(q_0)\}_{0\leq i \leq M}$ satisfying $q_0\leq\omega_i(q_0)$ for every $i=0,\dots,M$ we have

\[
\eta_{i}\leq \omega_{i}(q_0)\Longrightarrow \eta_{i+1}\leq \omega_{i+1}(q_0),
\mbox{ for every }i=0,\ldots,M-1,\]
from which, taking into account that we can choose $\omega_{0}(q_0)=q_0$, we will obtain  \eqref{stimas_0}.

For any $i\in \{0,\dots, M-1\}$, let ${\cal W}_i=\textrm{Int}(\cup_{j=0}^i \overline{D}_j)$, ${\cal U}_i=\Omega\setminus{\cal W}_i$, $\tilde{{\cal K}}={\cal K}_h\cap {\cal W}_i$  and, for $y,z\in \tilde{{\cal K}}$, let ${ {\cal T}}_i(y,z)=\left \{ {\cal T}^{(p,q)}_i(y,z)\right \}_{1 \leq p,q\leq 3} $ be the matrix valued function whose elements are given by
\[
{\cal T}^{(p,q)}_i(y,z):=\int_{{\cal U}_i}\HH_0(\cdot)\nabhat G^{(p)}(\cdot,y):\nabhat G^{(q)}(\cdot,z),
\quad p,q=1,2,3.
\]

Moreover, for any fixed $q=1,2,3$, let us denote by ${\cal T}^{(\cdot,q)}_i(\cdot,z)$ the vector valued function whose elements are ${\cal T}^{(p,q)}_i(\cdot,z)$, $p=1,2,3$; analogously we define ${\cal T}^{(p,\cdot)}_i(y,\cdot)$, for any fixed $p=1,2,3$.

Let us fix $l,m$ unit vectors of $\mathbb{R}^3$. By \eqref{1s_0} we have, for every $y,z\in K_0$,

\begin{eqnarray*}
\left\vert{\cal T}_{i}(y,z)\,l\cdot m\right\vert &\leq&\left\vert\int_{\om}
\HH_0 \nabhat G(\cdot,y)\,l:\nabhat G(\cdot,z)\,m\right\vert
\\ &+&\left\vert\int_{\om\cap {\cal W}_{i}}
\HH_0 \nabhat G(\cdot,y)\,l:\nabhat G(\cdot,z)\,m\right\vert\leq
\frac{C}{r_0}\,q_0+\frac{C}{r_0}\,\eta_{i}\nonumber\\
&\leq&\frac{C}{r_0}\left(q_0+\omega_{i}(q_0)\right).
\end{eqnarray*}
where $C$ depends on $\alpha_0$, $\beta_0$, $L$ and $A$ only.

Arguing similarly to the proof of Proposition \ref{QEUCSk}, we have that there exist $C_0$, $C_1$  such that for every $r\in(0,r_0/C_0)$ the following inequality holds true (recall $r_1=\frac{\rho_1}{4L}$ where $\rho_1$ is defined in \eqref{pho_1})

\begin{equation}\label{estSk}
|{\cal T}_{i}(y_r,z_{\bar r})|\leq C r_0^{-1}\left(\frac{r_0}{r}\right)^{9/2}\varsigma(\omega_i(q_0),r/r_0),
\end{equation}
where $\varsigma$ is defined as in (\ref{zitaelisa}).

Now we have trivially
\begin{eqnarray}\label{ssplit}
\int_{D_{i+1}\cap B_{r_1}}\!\!\!\!\!\!\!\!\!
\HH_0 \nabhat G(\cdot,y_r)\,l:\nabhat G(\cdot,z_{\bar r})\,m&=&\mathcal{T}_{i}(y_r,z_{\bar r})\,l\cdot m\\
&&-\int_{{\cal U}_{i}\setminus B_{r_1}}\!\!\!\!\!\HH_0 \nabhat G(\cdot,y_r)\,l:\nabhat G(\cdot,z_{\bar r})\,m.\nonumber
\end{eqnarray}
On the other side we have
\[
\left\vert\int_{{\cal U}_{i}\setminus B_{r_1}}\!\!\!\!\HH_0 \nabhat G(\cdot,y_r)\,l:\nabhat G(\cdot,z_{\bar r})\,m\right\vert\leq \frac{C}{r_0},
\]
where $C$ depend on $A$, $L$, $\alpha_0$, $\beta_0$ and $M$ only.

By the above inequality, \eqref{ssplit} and \eqref{estSk} we have

\[
\left\vert\int_{D_{i+1}\cap B_{r_1}}\HH_0 \nabhat G(\cdot,y_r)\,l:\nabhat G(\cdot,z_{\bar r})\,m\right\vert\leq \frac{C}{r_0}\left(1+\left(\frac{r_0}{r}\right)^{9/2}\varsigma(\omega_i(q_0),r/r_0)\right)
\]
where $C$ depend on $A$, $L$, $\alpha_0$, $\beta_0$ and $M$ only.

\noindent
Denote by
$\widetilde{\HH}$ the tensor  given by
\[
\widetilde{\HH}(x) =
 \left(h_{0,i+1}I_3\otimes I_3 +2k_{0,i+1}{\II}_{sym}\right)\chi _{\mathbb{R}
_{-}^{3}}(x).
 \]
From Proposition \ref{Green} we have, for every $c\in\left(1/4,1/2\right)$ and choosing $l=m=e_3$,

\begin{eqnarray}\label{Sgreen3}
\left \vert {\int_{B_{r_1}^-}
\widetilde{\HH} \nabhat \Gamma_{i+1}\left(x,re_3\right)e_3:\nabhat \Gamma_{i+1}\left(x,cre_3\right)e_3 dx}\right \vert\leq\nonumber\\
\hskip2cm\leq
\frac{C}{r_0}\left(\left(\frac{r_0}{r}\right)^{1/2}\!\!\!\!\!+\left(\frac{r_0}{r}\right)^{9/2}\varsigma(\omega_i(q_0),r/r_0)\right)
\end{eqnarray}
where  $C$ depend on $A$, $L$, $\alpha_0$, $\beta_0$ and $M$ only.

Now, performing the change of variables $x=r\xi$ in the integral on the right hand side of \eqref{Sgreen3} we obtain
\begin{eqnarray*}
\left \vert {\int_{B_{r_1/r}^-}\!\!\!
\widetilde{\HH} \nabhat \Gamma_{i+1}\left(\xi, e_3\right)e_3:\nabhat \Gamma_{i+1}\left(\xi, ce_3\right)e_3\, d\xi}\right \vert\leq \\ \hskip2cm\leq C\left[\left(\frac{r}{r_0}\right)^{1/2}+\left(\frac{r_0}{r}\right)^{7/2}\varsigma\left(\omega_i(q_0),\frac{r}{r_0}\right)\right].
\end{eqnarray*}
Therefore, for every $\varrho\in (0,1/C_1)$, we have

\begin{equation}\label{Shalfspace}
\left\vert\int_{\mathbb{R}^3_{-}}
\widetilde{\HH}\nabhat \Gamma_{i+1}\left(\cdot, e_3\right)e_3:\nabhat \Gamma_{i+1}\left(\cdot, ce_3\right)e_3\right\vert \leq C\left(\varrho^{1/2}+\varrho^{-7/2}\varsigma (\omega_i(q_0),\varrho)\right),
\end{equation}
where $C$ depend on $A, L, \alpha_0, \beta_0$ and $M$ only.
Now, if $\omega_{i}(q_0)<e^{-1}/2$  then we choose $\varrho=\frac{1}{2C_1}\left\vert\log\omega_{i}(q_0)\right\vert^{-\frac{1}{4\delta}}$, otherwise if $\omega_{i}(q_0)\geq e^{-1}/2$ then we estimate from above the right hand side of \eqref{Shalfspace} trivially. Hence, by Proposition \ref{trick2} we have

\begin{equation}\label{SVpsitrick}
 \left\vert\left(\frac{d}{dt}\Gamma_{\CC_b^{(i+1)}+t\widetilde{\HH}}(e_3,ce_3)\,e_3\cdot e_3\right)_{|t=0}\right\vert\leq C\sigma(\omega_{i}(q_0)),
\end{equation}
where $\sigma$ is defined by \eqref{modcont} and $C$ depends on $A$, $L$, $\alpha_0$, $\beta_0$ and $M$ only.

By explicit calculation from (\ref{rongelisa}), denoting by
\begin{eqnarray*}
  \lambda(t)=\lambda_{i+1}+th_{0,i+1}, && \mu(t)=\mu_{i+1}+tk_{0,i+1}, \\
  \nu(t)=\frac{\lambda(t)}{2(\lambda(t)+\mu(t))},&&
\end{eqnarray*}
and by
\[
\alpha(t)=F_1(\mu_i,\mu(t), \nu_i),\quad \gamma(t)=F_2(\mu_i,\mu(t), \nu_i,\nu(t)),
\]
for $F_1$ and $F_2$ as in \eqref{alpha} and \eqref{gamma}, we get
\begin{eqnarray*}
&&\frac{1}{16\pi\mu_i(1-\nu_i)}\left(\frac{d}{dt}\Gamma_{\CC_{i+1}+t\widetilde{\HH}}(e_3,ce_3)m\cdot l\right)_{|t=0}=\\
&&\hskip1cm=\frac{1}{(1+c)^3}\left\{\left[4(1-\nu_{i})(3-4\nu_{i})\alpha^\prime(0
)+(\alpha\gamma)^\prime(0)\right](1+c)^2+4c\alpha^\prime(0)\right\}.
\end{eqnarray*}
Therefore, from (\ref{SVpsitrick}) and \eqref{boundPoisson} we find easily
\begin{equation}\label{Vsyst1}
\left\{\begin{array}{rcl}
\left\vert \alpha^\prime(0)\right\vert\leq C\sigma(\omega_{i}(q_0)),\\
\left\vert (\alpha\gamma)^\prime(0) \right\vert\leq C\sigma(\omega_{i}(q_0)).
\end{array}\right.
\end{equation}
The first condition of \eqref{Vsyst1} gives

\[
\left\vert\frac{4(1-\nu_{i})\mu_{i}}{(\mu_{i}+(3-4\nu_{i})\mu_{i+1})^2}k_{0,i+1}\right\vert\leq C\sigma(\omega_{i}(q_0))
\]
hence, by recalling (\ref{boundPoisson}) and \eqref{strongconv}, we have
\begin{equation}\label{k1}
\left\vert k_{0,i+1}\right\vert\leq C\sigma(\omega_{i}(q_0)).
\end{equation}
Taking into account \eqref{k1}, the second equation of \eqref{Vsyst1} implies
\[
\left\vert\frac{8(1-\nu_{i})\mu_{i}\mu_{i+1}^2}{2(\lambda_{i+1}+\mu_{i+1})^2(\mu_{i+1}+(3-4\nu_{i+1})\mu_{i})^2}h_{0,i+1}\right\vert\leq C\sigma(\omega_{i}(q_0)),
\]
hence, again by \eqref{boundPoisson} and \eqref{strongconv},
\[
\left\vert h_{0,i+1}\right\vert\leq C\sigma(\omega_{i}(q_0)),
\]
where $C$ depends on $A$, $L$, $\alpha_0$, $\beta_0$ and $M$ only.

Therefore
\[
\eta_{i+1}\leq \omega_{i+1}(q_0):= C\sigma(\omega_{i}(q_0)),
\]
where $C$ depends on $A$, $L$, $\alpha_0$, $\beta_0$ and $M$ only.

Finally, by iteration we get
\[1=\eta_M\leq C\sigma^M(q_0)\leq C \sigma^N(q_0)\]
and the thesis follows.
\end{proof}
\begin{rem}\label{bomba}
Observe that the above proposition implies that the Frech\'et derivative $F^\prime(\underline{L})$ is injective for every $\underline{L}\in\mathbf{K}$ and hence point (v) of Proposition \ref{propBV} is completely proved.
Therefore we have
\[\|\underline{L}^1-\underline{L}^2\|_{\infty}\leq C\|F(\underline{L}^1)-F(\underline{L}^2)\|_* \quad \forall \underline{L}^1,\underline{L}^2\in {\bold K},\]
where $C=\max\{\frac{2R_1}{(\sigma_2)^{-1}(\delta_1)},\frac{2}{q_0}\}$, $M_1=\min\{\frac{\beta_0}{\sqrt{13}},\alpha_0\}$, $M_2=\frac{\sqrt{2N}}{\alpha_0}$, $\sigma_2(\cdot)=C_{\ast}\sigma^N(\cdot)$, $q_0=\left(\sigma^N\right)^{-1}\left(1/C_{\star}\right)$,  $\delta_1=\frac{1}{2}\min\{\delta_0, M_2\}$ and $\delta_0=\frac{q_0}{2C_{F^\prime}}$ and we recall that $C_{\ast}$ is the constant that occurs in Theorem \ref{Unifcont}, $C_{F^\prime}$ is the Lipschitz constant of $F^\prime$ introduced in Proposition \ref{deriv}, and $C_{\ast}$ has been introduced in  Proposition \ref{Stimabasso1}.
\end{rem}

\section{Appendix }
\label{appendix}
For the convenience of the reader, we recall here some quantitative estimates of unique continuation. Although such estimates have been proved in the general case where the elasticity tensor is of class $C^{1,1}$, here we give the statements in the special case we are interested in, namely  we assume that
\begin{equation}\label{isotrConst}
\CC=\lambda I_3\otimes I_3 +2\mu{\II}_{sym},
\end{equation}
where $\lambda$ and $\mu$ are real numbers satisfying (\ref{strconvlame}).

The following theorem is an immediate consequence of \cite[Theorem 5.1]{Al-M}
and standard estimate of smallness propagation \cite[proof of Theorem 1.10]{A-R-R-V}

\begin{theo}[Three sphere inequality]
\label{lem:3sphere} Let $u$ be a solution to the Lam\'{e} system
\[
\mbox{div}\left({\CC}\widehat{\nabla}u\right)=0\quad\mbox{in}\quad B_{\bar r},
\]
for some positive number $\bar r$. Then, for every $r_1, r_2, r_3$, such that
$0<r_1\leq r_2<r_3\leq\bar r$, we have
\begin{equation}
  \label{eq:3sph}
   \int_{B_{r_{2}}}| u|^{2} \leq C
   \left(  \int_{B_{r_{1}}}|u|^{2}
   \right)^{\theta_0}\left(  \int_{B_{r_{3}}}| u|^{2}
   \right)
   ^{1-\theta_0}\!\!\!\!,
\end{equation}
where $C$ and $\theta_0$, $0<\theta_0<1$, only depend on
$\alpha_{0}$, $\beta_{0}$, $\frac{r_{2}}{r_{3}}$ and,
increasingly, on $\frac{r_{1}}{r_{3}}$ .
\end{theo}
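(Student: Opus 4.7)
My plan is to derive the three-sphere inequality by combining the prototype version established in \cite{Al-M} with a standard iteration/smallness-propagation argument as in \cite{A-R-R-V}. Theorem 5.1 of \cite{Al-M} gives a three-sphere inequality for solutions of the Lam\'e system (even with variable, regular enough, coefficients) valid for a specific geometry, namely for some fixed ratio $\theta_{\ast}\in(0,1)$ depending only on $\alpha_0,\beta_0$ one has
\[
\int_{B_{\theta_{\ast}\rho}}|u|^{2}\leq C_{0}\left(\int_{B_{s}}|u|^{2}\right)^{\theta_{\ast}}\left(\int_{B_{\rho}}|u|^{2}\right)^{1-\theta_{\ast}},
\]
for every $0<s\leq \theta_{\ast}\rho$ and every $u$ solving $\mbox{div}(\CC\nabhat u)=0$ in $B_{\rho}$, with $C_0,\theta_{\ast}$ depending only on $\alpha_0,\beta_0$. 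The constant-coefficient case of interest here is a particular instance.

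To upgrade from this prototype to arbitrary $0<r_{1}\leq r_{2}<r_{3}\leq \bar r$, I would run a chain-of-balls argument. First, by applying the prototype with $\rho=r_{3}$ and $s=r_{1}$, I get the desired estimate at radius $\theta_{\ast}r_{3}$ instead of $r_{2}$. To reach a larger $r_{2}\in(\theta_{\ast}r_{3},r_{3})$, I would cover $B_{r_{2}}$ by finitely many balls $B_{\eta}(x_{j})$ of radius $\eta=c(r_{3}-r_{2})$ whose concentric dilations $B_{\eta/\theta_{\ast}}(x_{j})$ still lie in $B_{r_{3}}$, and then iteratively transport the smallness from a ball that meets $B_{\theta_{\ast}r_{3}}$ out to each $x_{j}$ along a chain of overlapping balls of fixed radius ratio. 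Each step of the chain is a single application of the prototype inequality and costs one factor of $\theta_{\ast}$ in the exponent; the number of steps is controlled by a geometric quantity depending only on $r_{2}/r_{3}$ (and, once we also wish to optimize with respect to $r_{1}$, on $r_{1}/r_{3}$). Summing over the finitely many balls $x_{j}$ and inserting the prototype estimate on the seed ball produces the final three-sphere inequality with constants $C$ and $\theta_{0}\in(0,1)$ having exactly the claimed dependence.

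The only nontrivial bookkeeping is tracking how the exponent $\theta_{0}$ degrades with the number of iterations, which is exactly the \emph{smallness propagation} estimate carried out in the proof of Theorem 1.10 of \cite{A-R-R-V}: after $k$ iterations one picks up an exponent $\theta_{\ast}^{k}$, and since $k$ depends only logarithmically on $r_{2}/r_{3}$ and $r_{1}/r_{3}$ the resulting $\theta_{0}$ depends on those ratios in the monotone way asserted in the statement. The main (mild) obstacle is simply checking that the covering of $B_{r_{2}}$ and the chain of balls can be chosen with a number of elements controlled quantitatively by $r_{2}/r_{3}$ and $r_{1}/r_{3}$ alone; this is routine because once those ratios are fixed the whole construction is scale-invariant. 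No new Carleman estimate is needed: the analytic content of the inequality is entirely carried by \cite[Thm. 5.1]{Al-M}, while the geometric propagation is standard.
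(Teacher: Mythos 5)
Your proposal is exactly what the paper does: the paper's ``proof'' of Theorem~\ref{lem:3sphere} consists solely of the remark that it is an immediate consequence of \cite[Theorem 5.1]{Al-M} together with the standard smallness-propagation argument of \cite[proof of Theorem 1.10]{A-R-R-V}. Your sketch simply makes explicit the chain-of-balls bookkeeping that those references provide, so you have reproduced the paper's approach rather than taken a different route.
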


The following theorem has been proved in \cite{M-R1}.
\begin{theo}[Stability estimate for the Cauchy problem]
  \label{theo:Cauchy-generale}

Let $\CC$ be as in (\ref{isotrConst}). Let $u$ be the
solution to the Cauchy problem
\[
\left\{\begin{array}{rcl}
             \mbox{div}\left(\CC\widehat{\nabla}u\right)&=&0\mbox{ in } B^+_{ r_0/3}, \\
             u(x^\prime,0) & =& h(x^\prime)\mbox{ on } B^\prime_{r_0/3},\\
             \frac{\partial u}{\partial x_3}(x^\prime,0)&=&g(x^\prime) \mbox{ on } B^\prime_{r_0/3},
           \end{array}\right.
\]
where $h \in
H^{\frac{1}{2}} (B^\prime_{r_0/3})$ and $g \in H^{- \frac{1}{2}}(B^\prime_{r_0/3})$.  We have

\begin{multline*}
  \|u\|_{L^\infty (B^+_{ r_0/6})}
  +
  r_0 \|\nabla u\|_{L^\infty (B^+_{ r_0/6})}
  \leq
  \\
  \leq
  C
  \|u\|_{H^1 (B^+_{ r_0/6})}
  ^{1-\theta_1}
  \left (
  \|h\|_{L^2 (B^\prime_{r_0/3})}
  +
  \| g \|_{H^{- \frac{1}{2}}(B^\prime_{r_0/3})}
  \right )^{\theta_1},
\end{multline*}
where $C$ and $\theta_1$, $0<\theta_1<1$, only depend on $\alpha_0$,
$\beta_0$.
\end{theo}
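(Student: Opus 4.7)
The strategy is to reduce the Cauchy stability estimate to an interior quantitative unique continuation statement, which is then handled by the three-sphere inequality (Theorem \ref{lem:3sphere}) combined with local elliptic regularity to upgrade $L^{2}$ control into $L^{\infty}$ and $C^{1}$ control.

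\emph{Step 1 (Lifting the Cauchy data).} I would first build an $H^{1}$ extension $v$ of the pair $(h,g)$ into $B^{+}_{r_{0}/3}$ so that $v(x^\prime,0)=h(x^\prime)$, $\partial_{x_{3}}v(x^\prime,0)=g(x^\prime)$, and
$\|v\|_{H^{1}(B^{+}_{r_{0}/3})}\leq C(\|h\|_{L^{2}(B^\prime_{r_{0}/3})}+\|g\|_{H^{-1/2}(B^\prime_{r_{0}/3})})$,
via a standard trace-lifting/extension. Setting $w:=u-v$, the new function has vanishing Dirichlet and Neumann data on the flat face $B^\prime_{r_{0}/3}$ and solves $\mathrm{div}(\CC\nabhat w)=-\mathrm{div}(\CC\nabhat v)$ in $B^{+}_{r_{0}/3}$, with right-hand side controlled in $H^{-1}$ by the Cauchy data.

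\emph{Step 2 (Reflection and propagation of smallness).} Using the vanishing of $w$ and of its conormal derivative on $\{x_{3}=0\}$, I would extend $w$ to the full ball $B_{r_{0}/3}$ by an appropriate reflection adapted to the Lam\'e operator (odd in the tangential components and even in the normal one, or vice versa, so that the jumps across $\{x_{3}=0\}$ are absorbed into a source term supported on the flat face and controlled by $\|h\|_{L^{2}}+\|g\|_{H^{-1/2}}$). The three-sphere inequality applied to this extension, iterated along a chain of balls covering $B^{+}_{r_{0}/6}$, then yields
$\|w\|_{L^{2}(B^{+}_{r_{0}/6})}\leq C\|w\|_{H^{1}(B^{+}_{r_{0}/3})}^{1-\theta_{1}}(\|h\|_{L^{2}}+\|g\|_{H^{-1/2}})^{\theta_{1}}$
for some $\theta_{1}\in(0,1)$ depending only on $\alpha_{0}$ and $\beta_{0}$.

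\emph{Step 3 (From $L^{2}$ to $L^{\infty}$ and $C^{1}$).} Invoking the interior regularity estimate of Proposition \ref{regularity} applied on a slightly larger half-ball (or, equivalently, Caccioppoli plus Sobolev embedding for solutions to Lam\'e systems with constant coefficients), I would upgrade the $L^{2}$ bound on $w$ to an $L^{\infty}$ and a gradient bound on $B^{+}_{r_{0}/6}$ with the same interpolation exponent. Adding back $v$, whose $L^{\infty}$ and $C^{1}$ norms are already controlled in terms of $(h,g)$ by the standard properties of the lifting, yields the stated inequality for $u$.

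\emph{Main obstacle.} The delicate point is the reflection argument for the vectorial Lam\'e system: unlike the scalar case, the Lam\'e operator couples $\lambda\,\mathrm{div}\,u$ and $\mu\,\nabhat u$, and a naive component-by-component odd/even extension does not give a solution of a Lam\'e-type system on the full ball. One must split $u$ into tangential and normal parts with respect to the flat interface and verify that the reflected vector field satisfies the reflected Lam\'e system up to an error supported on $\{x_{3}=0\}$ that is quantitatively controlled by the Cauchy data. The alternative, and actually cleaner, route followed in \cite{M-R1} bypasses the reflection altogether by a Carleman estimate for the Lam\'e operator with a weight that is small on the flat face and large in the interior, at the cost of a non-trivial system-level Carleman inequality; either way, obtaining the explicit exponent $\theta_{1}\in(0,1)$ depending only on $\alpha_{0},\beta_{0}$ is the core analytic step.
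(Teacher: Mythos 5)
The paper does not prove this statement: Theorem \ref{theo:Cauchy-generale} is quoted directly from \cite{M-R1}, so there is no in-paper argument to compare against. That context matters for assessing your sketch.

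Your Steps 1 (trace lifting of the Cauchy pair $(h,g)$ to an $H^1$ function $v$) and 3 (upgrading the $L^2$ bound to $L^\infty$ and gradient control via Proposition \ref{regularity} or Caccioppoli plus Sobolev) are sound and standard. The gap is in Step 2, and you identify it yourself but do not close it. A componentwise odd/even reflection does not carry $\mathrm{div}(\CC\widehat{\nabla}w)=0$ across $\{x_3=0\}$: the operator $\mu\Delta+(\lambda+\mu)\nabla\,\mathrm{div}$ mixes normal and tangential derivatives, so no parity assignment removes the jump in the conormal stress; even if one engineered a reflection, one would be left with a distributional source concentrated on $\{x_3=0\}$, for which the three-sphere inequality of Theorem \ref{lem:3sphere} (stated for homogeneous solutions) does not directly apply. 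A further, and more fundamental, missing ingredient is that a chain of interior three-sphere inequalities cannot, by itself, convert smallness of the Cauchy data on the flat face into smallness of $w$ on some interior ball: one needs a quantitative unique-continuation-from-the-boundary mechanism (a Carleman estimate with a weight adapted to the flat boundary, or a decreasing-radius chain of balls approaching the face with a geometric-series argument) to provide the ``seed'' ball where $w$ is small. This is precisely the analytic core of the theorem and is what \cite{M-R1} supplies; for constant Lam\'e coefficients one can also exploit that each component of $u$ is biharmonic, but a biharmonic reflection would require four pieces of Cauchy data, not two, so this route is not free either. Since your proposal ultimately defers the crux to ``the cleaner route followed in \cite{M-R1},'' it is a plan pointing at the right reference rather than an independent proof --- which, to be fair, is exactly what the paper itself does.
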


\bigskip

In order to state the following result of smallness propagation in a cone (Proposition \ref{cono}) we introduce some notation.
Given $z\in \mathbb{R}^{3}$, $\zeta \in
\mathbb{R}^{3}$, $\left \vert \zeta \right \vert =1$, $\gamma \in \left( 0,
\frac{\pi }{2}\right)$, we denote by
\[
C\left( z,\zeta ,\gamma \right) =\left \{ x\in \mathbb{R}^{3}|\text{ \ }
\frac{(x-z)\cdot \zeta }{\left \vert x-z\right \vert }>\cos \gamma \right \}
\]
the open cone having vertex $z$, axis in the direction $\zeta $ and width $
2\gamma $ and, for any $\rho >0$, we denote by
\[
C_{\rho }(\gamma )=C\left( 0,-e_{3},\gamma \right) \cap Q_{\rho ,H_{\gamma
} },
\]
where $H_{\gamma }=\frac{1}{\tan \gamma }$.

Let $\gamma _{1},\gamma _{2},\gamma _{3}\in \left( 0,\frac{\pi }{2}\right) $
be such that $\gamma _{1}<\gamma _{2}<\gamma _{3}$.

Denote by
\[
t_{0}=\frac{H_{\gamma _{3}}\rho }{1+\sin \gamma _{3}}\text{, }
\]
\[
\chi =\frac{1-\sin \gamma _{2}}{1-\sin \gamma _{1}}\text{,}
\]
and, for any $t\in \left( 0,t_{0}\right] $, denote by
\[
s_{k}=\chi ^{k-1}t\text{, \ }w_{k}=-s_{k}e_{3}\text{, \ }k\in \mathbb{N}
\text{,}\]
\begin{equation}\label{asterisco}
r_{3}^{(k)}=s_{k}\sin \gamma _{3}\text{, }r_{2}^{(k)}=s_{k}\sin \gamma _{2}
\text{, }r_{1}^{(k)}=s_{k}\sin \gamma _{1}\text{, \ }k\in \mathbb{N}\text{.}
\end{equation}
Notice that we have
\begin{equation}
\label{palle}
B_{r_{1}^{(k+1)}}\left( w_{k+1}\right) \subset B_{r_{2}^{(k)}}\left(
w_{k}\right) \subset B_{r_{3}^{(k)}}\left( w_{k}\right) \subset C_{\rho
}(\gamma _{3})\text{, \ for every }k\in \mathbb{N}\text{.}
\end{equation}

Let $r$ be a given number such that $r\in \left( 0,\chi t_{0}\right] $. Let $%
k_{0}$ be the smallest integer number such that $\chi ^{k-1}\leq \frac{r}{%
t_{0}}$ and let
\[
t:=\chi ^{-\left( k_{0}-1\right) }r\text{.}
\]
Notice that
\[
\chi t_{0}\leq t\leq t_{0}
\]
and
\begin{equation}
\label{notice}
w_{k_{0}}=-re_{3}\text{, }r_{1}^{(k_{0})}=r\sin \gamma _{1}\text{.}
\end{equation}

\bigskip

\begin{prop}\label{cono}
Let $\CC$ be as in \eqref{isotrConst} and let $u$ be a solution to the Lam\'{e} system
\[
\mbox{div}\left({\CC}\widehat{\nabla}u\right)=0\quad\mbox{ in }C_{\rho }(\gamma _{3}).
\]

Assume that
\begin{equation}
\label{E}
\int_{B_{t\sin \gamma _{1}}\left( w_{1}\right) }|u|^{2}\leq \varepsilon ^{2}%
\text{, \  \ }\int_{C_{\rho }(\gamma _{3})}|u|^{2}\leq E^{2}\text{,}
\end{equation}
where $\varepsilon ,E$ are given positive numbers such that $\varepsilon \leq E$.
Then we have
\begin{equation}
\label{erre}
|u(-re_{3})|\leq \frac{C}{r^{3/2}}\varepsilon ^{\eta _{r}}E^{1-\eta _{r}}
\text{,}
\end{equation}
where
\[
\eta _{r}=\overline{\theta }\left( \frac{r}{t}\right) ^{\frac{\left \vert
\log \overline{\theta }\right \vert }{\left \vert \log \chi \right \vert }}%
\text{,}
\]
$C$ and $\overline{\theta }$, $\overline{\theta }\in (0,1)$ depend on $\alpha _{0}$, $\beta _{0}$, $\gamma _{1}$, $\gamma _{2}$ and $\gamma _{3}$ only.
\end{prop}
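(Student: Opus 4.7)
The plan is to iterate the three sphere inequality of Theorem \ref{lem:3sphere} along the chain of concentric triples of balls $\{B_{r_i^{(k)}}(w_k)\}_{i=1,2,3}$, $k=1,\dots,k_0$, defined in \eqref{asterisco}, which marches down the axis of the cone from $w_1=-te_3$ to $w_{k_0}=-re_3$. A final step converts the $L^2$ smallness on the innermost ball into a pointwise bound at $-re_3$ via a standard interior $L^\infty$--$L^2$ estimate for constant coefficient Lam\'e systems.

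By the inclusions \eqref{palle}, for every $k$ the three concentric balls $B_{r_i^{(k)}}(w_k)$ lie inside $C_\rho(\gamma_3)$, and moreover $B_{r_1^{(k+1)}}(w_{k+1})\subset B_{r_2^{(k)}}(w_k)$; this last inclusion is precisely the reason for the choice $\chi=(1-\sin\gamma_2)/(1-\sin\gamma_1)$. Setting $I_k=\int_{B_{r_1^{(k)}}(w_k)}|u|^2$ and observing that the ratios $r_1^{(k)}/r_3^{(k)}=\sin\gamma_1/\sin\gamma_3$ and $r_2^{(k)}/r_3^{(k)}=\sin\gamma_2/\sin\gamma_3$ are independent of $k$, Theorem \ref{lem:3sphere} provides constants $\theta_0\in(0,1)$ and $C>0$ depending only on $\alpha_0$, $\beta_0$, $\gamma_1$, $\gamma_2$, $\gamma_3$ such that, for every $k$,
\begin{equation*}
I_{k+1}\le \int_{B_{r_2^{(k)}}(w_k)}|u|^2\le C\,I_k^{\theta_0}\,E^{2(1-\theta_0)},
\end{equation*}
where the bound on the outer ball comes from \eqref{E}.

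Iterating this recursion from $k=1$ (where $I_1\le\varepsilon^2$ by \eqref{E}) to $k=k_0$ and summing the geometric progression $1+\theta_0+\cdots+\theta_0^{k_0-2}$ one obtains
\begin{equation*}
I_{k_0}\le C'\,E^{2(1-\theta_0^{k_0-1})}\varepsilon^{2\theta_0^{k_0-1}}.
\end{equation*}
By the definition of $k_0$ we have $\chi^{k_0-1}\le r/t_0$, and since $\log\chi<0$ this yields $\theta_0^{k_0-1}\ge (r/t_0)^{|\log\theta_0|/|\log\chi|}$. Absorbing the harmless factor $t/t_0\in[\chi,1]$ into the constant and setting $\overline\theta=\theta_0$ produces exactly the exponent $\eta_r$ appearing in the statement.

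To conclude, since $u$ solves a constant coefficient Lam\'e system on $B_{r\sin\gamma_1}(-re_3)=B_{r_1^{(k_0)}}(w_{k_0})$, the interior estimate \eqref{stimaLinfinito} of Proposition \ref{regularity} (applied with $\CC^+=\CC^-=\CC$) on a slightly smaller concentric ball gives
\begin{equation*}
|u(-re_3)|\le C(r\sin\gamma_1)^{-3/2}\,I_{k_0}^{1/2},
\end{equation*}
which combined with the previous bound and $\varepsilon\le E$ yields \eqref{erre}. The main technical point is the careful choice of $\chi$ that guarantees the nested inclusion at every step, together with the bookkeeping needed to pass from the exponential-in-$k_0$ factor $\theta_0^{k_0-1}$ to the polynomial-in-$r/t$ exponent $\eta_r$ via the definition of $k_0$; once the iteration geometry is set up correctly, the rest is a routine interpolation argument.
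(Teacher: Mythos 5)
Your proposal reproduces the paper's argument essentially verbatim: march a nested chain of triples of balls down the cone axis using the geometry set up in \eqref{asterisco}--\eqref{palle}, iterate the three-sphere inequality (Theorem \ref{lem:3sphere}) to get $\int_{B_{r_1^{(k_0)}}(w_{k_0})}|u|^2\le C\,\varepsilon^{2\theta_0^{k_0-1}}E^{2(1-\theta_0^{k_0-1})}$, and then pass to the pointwise bound via \eqref{stimaLinfinito}. This is exactly the paper's proof.

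One remark on the bookkeeping in your penultimate step. Writing $a=|\log\theta_0|/|\log\chi|$, the inequality $\chi^{k_0-1}\le r/t_0$ raised to the positive power $a$ gives $\theta_0^{k_0-1}=\chi^{(k_0-1)a}\le (r/t_0)^a$, not $\ge$ as you claim; raising to a positive power preserves the sign, and the remark about $\log\chi<0$ does not flip it. Moreover, since $\eta_r$ appears as an \emph{exponent} of $\varepsilon/E$, the factor $(t/t_0)^a$ relating $(r/t_0)^a$ to $(r/t)^a$ cannot simply be ``absorbed into the constant''. None of this is needed: the clean observation, and the one the paper uses, is that by construction $t=\chi^{-(k_0-1)}r$, so $r/t=\chi^{k_0-1}$ \emph{exactly}, hence $(r/t)^a=\theta_0^{k_0-1}$ and $\eta_r=\overline\theta\,\theta_0^{k_0-1}\le\theta_0^{k_0-1}$, which (since $\varepsilon\le E$) is precisely the comparison required. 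With that fix your proof is complete and identical to the paper's.
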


\textit{Proof of Proposition \ref{cono}}\\
Let $r_{i}^{\left( k\right) }$ for $i=1,2,3$ be as in (\ref{asterisco}).
By Theorem \ref{lem:3sphere} we have, for every $k\in \mathbb{N}$,
\begin{equation}
\label{trepalle_k}
\int_{B_{r_{2}^{\left( k\right) }}\left( w_{k}\right) }|u|^{2}\leq C\left(
\int_{B_{r_{1}^{\left( k\right) }}\left( w_{k}\right) }|u|^{2}\right) ^{%
\overline{\theta }}\left( \int_{B_{r_{3}^{\left( k\right) }}\left(
w_{k}\right) }|u|^{2}\right) ^{1-\overline{\theta }},
\end{equation}
where $C$ and $\overline{\theta }$, $\overline{\theta }\in (0,1)$, depend on
$\alpha _{0}$, $\beta _{0}$, $\gamma _{1}$, $\gamma _{2}$ and $\gamma _{3}$ only.

Denote by
\[
\sigma _{k}:=E^{-2}\int_{B_{r_{1}^{\left( k\right) }}\left( w_{k}\right)
}|u|^{2}\text{.}
\]
Since, by (\ref{palle}),  $B_{r_{1}^{\left( k\right) }}\subseteq B_{r_{2}^{\left( k-1\right) }}$, we have
\begin{equation}
\label{1s}
\sigma _{k}\leq E^{-2}\int_{B_{r_{2}^{\left( k-1\right) }}\left(
w_{k-1}\right) }\!\!\!\!\!\!\!|u|^{2},\quad\text{ for every }k\geq 2\text{.}
\end{equation}
By the second inequality in (\ref{E}) and by (\ref{trepalle_k}) and (\ref{1s}) we get
\[
\sigma _{k}\leq C\sigma _{k-1}^{\overline{\theta }}\text{, for every }k\geq 2.
\]
Iterating the last inequality and taking into account the first inequality in (\ref{E})
we get
\[
\sigma _{k}\leq C^{\frac{1}{1-\overline{\theta }}}\left( \frac{\varepsilon
^{2}}{E^{2}}\right) ^{\overline{\theta }^{k-1}}\text{, for every }k\geq 2%
\text{.}
\]
Now, we choose $k=k_{0}$ in the above inequality and notice that
\[
\overline{\theta }^{k_{0}-1}\geq \eta _{r}\text{,}
\]
hence
\[
\int_{B_{r_{1}^{\left( k_{0}\right) }}\left( w_{k_{0}}\right) }|u|^{2}\leq
C\varepsilon ^{2\eta _{r}}E^{2\left( 1-\eta _{r}\right) }\text{,}
\]
where $C$ depends on $\alpha _{0}$, $\beta _{0}$, $\gamma _{1}$, $\gamma _{2}$ and $\gamma _{3}$ only.

Finally, by (\ref{notice}) and by the estimate (\ref{stimaLinfinito}),
\[
\left \Vert u\right \Vert _{L^{\infty }\left( B_{\frac{r\sin \gamma _{1}}{2}}\left(
-re_{3}\right) \right) }^{2}\leq \frac{C}{\left( r\sin \gamma _{1}\right) ^{3}%
}\int_{B_{r\sin \gamma _{1}}\left( re_{3}\right) }|u|^{2}\text{,}
\]
where $C$ depends on $\alpha _{0}$ and $\beta _{0}$ only and (\ref{erre})
follows.  $\Box$

\bigskip
We finally end this appendix by proving our main result on quantitative estimate of unique continuation for solutions of Lam\'{e} system with piecewise constant coefficients. In this proof the elasticity tensor $\CC$ is of the form \eqref{isotr1}.

\bigskip

\textit{Proof of Proposition \ref{QEUC}}\\
Denote by $C_2=\max \left \{6, 4LC_{L},4C_{1},\frac{2r_0}{h_0} \right\}$ and by $\rho_2=\frac{r_0}{C_2}$, where $C_{L}$ and $C_1$ are defined in \eqref{pho_1} and in Proposition \ref{Analytic} respectively and $h_0$ is defined in (\ref{h_0}). Notice that $C_2$ does not depend on $r_0$ and that $\rho_2\leq\frac{\rho_1}{16}$. It is not restrictive to assume $n_1=e_3$. Let us denote $x_0=P_{1}+n_1\frac{3}{16}\rho_1$.
We have by \eqref{palla in K_0}
\[
  K_{0}\supset B_{\rho _{2}}^{\prime }(P_{1})\times \left( \frac{\rho _{1}}{8
},\frac{\rho _{1}}{4}\right)\supset B_{\rho _{2}}(x_0).
\]
Moreover, by Proposition \ref{Analytic} we have that the function $v_{|B_{\rho _{2}}^{\prime }(P_{1})\times \left(0,\frac{\rho _{1}}{4}\right)}$ can be extended
analytically to a function $v_0$ on $B_{\rho _{2}}^{\prime }(P_{1})\times \left(-\frac{r_{0}}{4C_{1}},\frac{\rho _{1}}{4}\right)$
and
\begin{equation}
  \label{Bound}
\|v_0\|_{L^\infty\left(B_{\rho _{2}}^{\prime}(P_{1})\times \left( -\frac{\rho _{1}}{4},\frac{r_{0}}{4C_{1}
}\right)\right)}\leq C (E_1+\ep_1),
\end{equation}
where $C$ depends on $A$, $L$, $\alpha_0$, $\beta_0$ and $N$ only.

Let us construct a chain of spheres of radius $\rho_2/4$ such that the first is $B_{\rho_2/4}(x_0)$, all the spheres are externally tangent and the last one is
centered at $P_1+\frac{\rho_2}{2}n_1$. We choose such a chain so that the spheres of radius $\rho_2$ concentric with those of the chain are contained in
$B_{\rho _{2}}^{\prime}(P_{1})\times \left(-\frac{r_{0}}{4C_{1}},\frac{\rho _{1}}{4}\right)$. The number of spheres of the chain is certainly smaller than a
constant $m_1$ depending on $L, \alpha_0$ and $\beta_0$ only.

By an iterated application of three sphere inequality \eqref{eq:3sph} with $r_1=\frac{\rho_2}{4}, r_2=\frac{3\rho_2}{4}, r_3=\rho_2$ and by \eqref{Bound} we have
\begin{equation}
  \label{error1}
\|v_0\|_{L^2\left(B_{3\rho_2/4}(P_1+\frac{\rho_2}{2}n_1)\right)}\leq C\ep_1^{\theta_0^{m_1}}(E_1+\ep_1)^{1-\theta_0^{m_1}},
\end{equation}
where $\theta_0$, $0<\theta_0<1$, depends on $\alpha_0$ and $\beta_0$ only and $C$ depends on $A, L, \alpha_0$ and $\beta_0$ only.
Since $B_{3\rho_2/4}(P_1+\frac{\rho_2}{2}n_1)\supset B_{\rho_2}(P_1)$, by \eqref{error1} we have trivially
\begin{equation}
  \label{error2}
\|v_0\|_{L^2\left(B_{\rho_2/4}(P_1)\right)}\leq C\ep_1^{\theta_0^{m_1}}(E_1+\ep_1)^{1-\theta_0^{m_1}}.
\end{equation}
By the above inequality and by Caccioppoli inequality, \cite[pag.20]{BBFM}, we have
\begin{equation}
  \label{error3}
\|\nabla v_0\|_{L^2\left(B_{\rho_2/8}(P_1)\right)}\leq \frac{C}{r_0}\ep_1^{\theta_0^{m_1}}(E_1+\ep_1)^{1-\theta_0^{m_1}},
\end{equation}
where $C$ depends on $A, L, \alpha_0$ and $\beta_0$ only.

By \eqref{error2} and \eqref{error3} we get the following trace inequality
\begin{equation}
  \label{error4}
\|v_0\|_{L^2 (B^\prime_{\rho_2/16}(P_1))}
  + r_0\| (\CC\nabhat v_0)n_1 \|_{H^{- \frac{1}{2}}(B^\prime_{\rho_2/16}(P_1))}\leq C\ep_1^{\theta_0^{m_1}}(E_1+\ep_1)^{1-\theta_0^{m_1}}.
\end{equation}
Now, let us recall the following transmission conditions
\begin{equation}
  \label{transmission}
  {v}_{|D_0}={v}_{|D_1} \mbox{ on } \Sigma_1 \mbox{ , } \left(\CC\nabhat v\right)_{|D_0}n_1= \left(\CC\nabhat v\right)_{|D_1}n_1 \mbox{ on } \Sigma_1.
\end{equation}
 Let us denote by $B^-_{\rho_2/32}(P_1)=D_1\cap B_{\rho_2/32}(P_1)$. By Theorem \ref{theo:Cauchy-generale}, \eqref{transmission}, \eqref{error4}, \eqref{limitaz}
 and the Caccioppoli inequality  we have
 \begin{equation}
 \label{errorv_1}
 \|v_{|D_1}\|_{L^\infty (B^-_{\rho_2/32}(P_1))}
  +
  r_0 \|\nabla v_{|D_1}\|_{L^\infty (B^-_{\rho_2/32}(P_1))}
  \leq
  C\ep_1^{\theta_1\theta_0^{m_1}}(E_1+\ep_1)^{1-\theta_1\theta_0^{m_1}},
\end{equation}
where  $\theta_0\in(0,1)$ and $\theta_1\in(0,1)$ depend on $\alpha_0$ and $\beta_0$ only and $C$ depends on $A, L, \alpha_0$ and $\beta_0$ only.

Now, we prove by induction what follows:
given $m_2=\frac{3A (64)^{3(N-2)}C_2^3}{4\pi}$ and $\rho_k=(64)^{-k+2}$,
there exist a constant $C$  depending on $A$, $L$, $\alpha_0$, $\beta_0$ and (increasingly) on $M$, such that for
every $k\in\{1,2, ..., M-1\}$ the following inequality holds true
\begin{multline}
 \label{errorv_k}
 \|v_{|D_k}\|_{L^\infty (B^-_{\bar\rho_{k+1}/32}(P_k))}
  +
  r_0 \|\nabla v_{|D_k}\|_{L^\infty (B^-_{\bar\rho_{k+1}/32}(P_k))}
  \leq\\
  \leq C\ep_1^{\theta_1^k\theta_0^{m_1+(k-1)m_2}}(E_1+\ep_1)^{1-\theta_1^k\theta_0^{m_1+(k-1)m_2}}.
\end{multline}

If $k=1$ then \eqref{errorv_k} is proved in \eqref{errorv_1}. Now, assume that \eqref{errorv_k} holds true for $k$, $1\leq k\leq M-2$. Let us denote by $v_{k}$
the analytic extension of $v_{|\widetilde{\mathcal{K}}_{h_0/2}\cap D_{k}}$ to $\left(\widetilde{\mathcal{K}}_{h_0/2}\cap D_{k+1}\right)\cup(\Xi^{C_1}_{k+1}\cap\bar D_{k})$ where
$h_0$, $\widetilde{\mathcal{K}}_{h_0/2}$ and $\Xi^{C_1}_{k+1}$ are defined by \eqref{h_0}, \eqref{Corridoio} and \eqref{Xi} respectively. Let us denote
$x_k=P_k-\frac{\rho_k}{32}n_k$. By the induction hypothesis we have trivially

\begin{equation}
 \label{errorv_2}
 \|v_k\|_{L^2 (B_{\rho_{k+1}/64}(x_k))}
    \leq
  C\ep_1^{\theta_1^k\theta_0^{m_1+(k-1)m_2}}(E_1+\ep_1)^{1-\theta_1^k\theta_0^{m_1+(k-1)m_2}},
\end{equation}
where $C$ depends on $A, L, \alpha_0, \beta_0$ and (increasingly) $k$ only.

Let us construct a chain of spheres of radius $\rho_{k+1}/4\cdot64$ such that the first is $B_{\rho_{k+1}/4\cdot64}(x_k)$, all the spheres are externally
tangent and the last one is centered at $P_{k+1}+\frac{\rho_{k+1}}{4\cdot64}n_{k+1}$. We choose such a chain so that the spheres of radius
$\rho_{k+1}/64$ concentric with those of the chain are contained in $\widetilde{\mathcal{K}}_{h_0/2}\cap D_{k}$. The number of spheres of the chain is
certainly smaller than a constant $m_2$.

By an iterated application of three sphere inequality \eqref{eq:3sph} with $r_1=\frac{\rho_{k+1}}{4}$, $r_2=\frac{3\rho_{k+1}}{4}$, $r_3=\rho_{k+1}$ and
by \eqref{errorv_2} we have
\begin{multline*}
  \|v_k\|_{L^2\left(B_{\rho_{k+1}/4}(P_{k+1})\right)}\leq \|v_k\|_{L^2\left(B_{3\rho_{k+1}/4}(P_{k+1}+\frac{\rho_{k+1}}{4}n_{k+1})\right)}\leq\\
  \leq
  C\ep_1^{\theta_1^k\theta_0^{m_1+km_2}}(E_1+\ep_1)^{1-\theta_1^k\theta_0^{m_1+km_2}},
\end{multline*}
where $C$ depends on $A, L, \alpha_0, \beta_0$ and (increasingly) $N$ only.

Now, proceeding in exactly the same way followed to prove \eqref{errorv_1} we have
\begin{multline*}
 \|v_{|D_{k+1}}\|_{L^\infty (B^-_{\rho_{k+1}/32}(P_{k+1}))}
  +
  r_0 \|\nabla v_{|D_{k+1}}\|_{L^\infty (B^-_{\rho_{k+1}/32}(P_{k+1}))}
  \leq\\
  \leq C\ep_1^{\theta_{1}^{k+1}\theta_0^{m_1+km_2}}(E_1+\ep_1)^{1-\theta_{1}^{k+1}\theta_0^{m_1+km_2}},
\end{multline*}
where $C$ depends on $A, L, \alpha_0, \beta_0$ and $N$ only.
Therefore, inequality \eqref{errorv_k} holds true for every $k\in\{1,2, ..., M-1\}$.

In particular by \eqref{errorv_k} we have
\begin{multline}
 \label{errorv_5}
 \|v_{|D_{M-1}}\|_{L^\infty (B^-_{\rho_{M}/32}(P_{M}))}
  +
  r_0 \|\nabla v_{|D_{M-1}}\|_{L^\infty (B^-_{\bar\rho_{M}/32}(P_M))}
  \leq\\
  \leq C\ep_1^{\theta_1^{M-1}\theta_0^{m_1+(M-2)m_2}}(E_1+\ep_1)^{1-\theta_1^{M-1}\theta_0^{m_1+(M-2)m_2}}.
\end{multline}
Now by a rigid transformation of coordinate under which we have $P_{M}=0$  and $
n _{M}=e_3$ we have $Q_{(M)}^{-}=B_{\rho _{1}/4L}^{\prime }\times \left(0,\frac{\rho _{1}}{4}\right)$.
In what follows, first we derive an error smallness estimate in a ball contained in the cylinder $B_{\rho _{1}/8L}^{\prime }\times \left(0,\frac{\rho
_{1}}{4}\right)$ and then we apply  Proposition \ref{cono}  in the cone $C_{\rho _{1}/8L }(\gamma _{3})$, where $\gamma _{3}=\arctan \frac{1}{2L}$. In order to
make precise such a step we adopt all the notation introduced before such a proposition. Therefore we denote $H_{\gamma _{3}}=2L$,
$
t_{0}=\frac{\rho _{1}}{8L}\frac{H_{\gamma _{3}}}{1+\sin \gamma _{3}}$. Since
$\sin \gamma _{3}=\frac{1}{\sqrt{4L^{2}+1}}$ we have
\[
t_{0}=\frac{\rho _{1}}{4}\frac{\sqrt{4L^{2}+1}}{1+\sqrt{4L^{2}+1}}.
\]
Let $\gamma _{1},\gamma _{2}$ be such that
\[
\sin \gamma _{1}=\frac{1}{4}\sin \gamma _{3}\text{, \ }\sin \gamma _{2}=%
\frac{3}{4}\sin \gamma _{3}
\]
and set
\[
\chi =\frac{1-\sin \gamma _{2}}{1-\sin \gamma _{1}}=\frac{4\sqrt{4L^{2}+1}-3%
}{4\sqrt{4L^{2}+1}-1}\text{.}
\]

Let $r\in \left( 0,\chi t_{0}\right) $ be fixed and
\[
t:=\chi ^{-(k_{0}-1)}r
\]
where $k_{0}$ is the smallest integer number such that $\chi ^{k-1}\leq
\frac{t_{0}}{r}$. Moreover let $\overline{P}=P_{M}+tn _{M}$.
Proceeding as before, by \eqref{errorv_5} we get
\[
 \|v\|_{L^2 (B_{\rho_{M}/4\cdot64}(\overline{P}))}
    \leq
  \ep_2:=C\ep_1^{\theta_1^{M-1}\theta_0^{m_1+(M-1)m_2}}(E_1+\ep_1)^{1-\theta_1^{M-1}\theta_0^{m_1+(M-1)m_2}},
\]
where $C$ depends on $A$, $L$, $\alpha_0$, $\beta_0$ and $M$ only.

\noindent Now denote by $\rho=\min \left \{\rho_{M}/4\cdot64,t\sin \gamma_1\right \}$. By applying Theorem \ref{lem:3sphere} with $r_1=\rho$, $r_2=t\sin
\gamma_1$ and $r_3=t\sin \gamma_3$ we get

\begin{equation*}
 \|v\|_{L^2 (B_{t\sin \gamma_1}(\overline{P}))}
    \leq
  \ep_2:=C\ep_2^{\theta_2}(E_1+\ep_2)^{1-\theta_2},
\end{equation*}
where $C$, $\theta_2$, $0<\theta_2<1$ depend on $L$, $\alpha_0$ and $\beta_0$ only.
Now we use Proposition \ref{cono} and we get
\begin{equation}
\label{erreV}
|v(\tilde{x})|\leq C\left( \frac{r_{0}}{r}\right) ^{2-\frac{\eta_{r}}{2}}\varepsilon_2 ^{\eta _{r}}(E_1+\ep_2)^{1-\eta _{r}}
\text{,}
\end{equation}
where $\tilde{x}=P_{M}+rn_{M}$, $C$ depends on $A$, $L$, $\alpha_0$, $\beta_0$ and (increasingly) $N$ only and
\begin{equation}
\label{etarV}
\eta _{r}=\theta_2\left( \frac{r}{t}\right) ^{\frac{\left \vert
\log \theta_2\right \vert }{\left \vert \log \chi \right \vert }}.
\end{equation}
Finally, denoting by $\widetilde{\theta}=\min \left \{\theta_1, \theta_2, \theta_3 \right \}$ and $\overline{m}=\max \left \{m_1+1, m_2+1 \right \}$ we get
\begin{equation*}
|v(\tilde{x})|\leq C\left( \frac{r_{0}}{r}\right) ^{2-\frac{\eta_{r}}{2}}\varepsilon_1 ^{\widetilde{\theta}^{\overline{m}M}\eta
_{r}}(E_1+\ep_1)^{1-\widetilde{\theta}^{\overline{m}M}\eta _{r}}
\end{equation*}
and by $t\geq\chi t_0$ the thesis follows. \qed


\bibliographystyle{plain}

\end{document}